\DeclareSymbolFontAlphabet{\amsmathbb}{AMSb}%
\newcommand{\defeq}{\vcentcolon=}
\newcommand{\textn}[1]{\textnormal{#1}}
\DeclareMathAlphabet{\mathbfit}{OML}{cmm}{b}{it}
\newcommand{\jump}[1]{\left\llbracket #1 \right\rrbracket}
\newcommand{\RR}{{\amsmathbb R}}
\newcommand{\Omegai}{\Omega_{i}}
\newcommand{\vecv}{{\mathbf{v}}}
\newcommand{\vecr}{{\mathbf{r}}}
\newcommand{\vecu}{{\mathbf{u}}}
\newcommand{\vK}{\mathbf K}
\newcommand{\II}{{\textbf I}}
\newcommand{\hg}{h,\gamma}
\newcommand{\vecn}{{\mathbf{n}}}
\newcommand{\vecx}{{\mathbf{x}}}
\newcommand{\hi}{h,i}
\newcommand{\eq}{\defeq}
\newcommand{\Tau}{\mathcal{T}}
\newcommand{\Assum}{(\textbf{A1})--(\textbf{A5})}
\DeclareMathOperator*{\argmin}{arg\,min}
\newtheorem{thm}{Theorem}
\numberwithin{thm}{section}
\newtheorem{lem}[thm]{Lemma}
\newtheorem{prop}[thm]{Proposition}
\newtheorem{cor}[thm]{Corollary}
\newtheorem{df}[thm]{Definition}
\newtheorem{rem}[thm]{Remark}
\newtheorem{algo}[thm]{Algorithm}
\newcommand{\bse}{\begin{subequations}}
\newcommand{\ese}{\end{subequations}}
\newcommand{\red}[1]{{\color{red}{#1}}}
\newcommand{\eqnum}{\refstepcounter{equation}\textup{\tagform@{\theequation}}}
\newcommand{\FIX}[2]{{{#2}}}
\newcommand{\FIXX}[2]{{{#2}}}
\title{Robust linear domain decomposition  schemes for\\
 reduced  non-linear fracture flow models}
\author{Elyes Ahmed\footnotemark[2] \footnotemark[4]
\and Alessio Fumagalli\footnotemark[1]
\and Ana Budi\v{s}a\footnotemark[2]\\
\and Eirik Keilegavlen\footnotemark[2]
\and Jan M. Nordbotten\footnotemark[2]
\and Florin A. Radu\footnotemark[2]
%\footnotemark[3]
}
\date{\today}
\begin{document}

\maketitle

\renewcommand{\thefootnote}{\fnsymbol{footnote}}

\footnotetext[2]{Department of Mathematics, University of Bergen, P. O. Box 7800, N-5020 Bergen, Norway.
\href{mailto:ana.budisa@uib.no}{ana.budisa@uib.no},
\href{mailto:eirik.keilegavlen@uib.no}{eirik.keilegavlen@uib.no},
\href{mailto:jan.nordbotten@uib.no}{jan.nordbotten@uib.no},
\href{mailto:florin.radu@uib.no}{florin.radu@uib.no},
}%\href{mailto:martin.vohralik@inria.fr}{martin.vohralik@inria.fr}}
\footnotetext[4]{SINTEF Digital, Mathematics and Cybernetics, Oslo, Norway (current address).
\href{mailto:elyes.ahmed@sintef.no}{elyes.ahmed@sintef.no},
%\href{mailto:japhet@math.univ-paris13.fr}{japhet@math.univ-paris13.fr}
}
\footnotetext[1]{Department of Mathematics, Politecnico di Milano, piazza Leonardo da Vinci 32, 20133 Milan, Italy.
\href{mailto:alessio.fumagalli@polimi.it}{alessio.fumagalli@polimi.it}.
}
%\footnotetext[3]{Department of Civil and Environmental Engineering, Princeton University, Princeton, N. J., USA.}
\renewcommand{\thefootnote}{\arabic{footnote}}
%\slugger{mms}{xxxx}{xx}{x}{x--x}%slugger should be set to mms, siap, sicomp, sicon, sidma, sima, simax, sinum, siopt, sisc, or sirev

\numberwithin{equation}{section}

\begin{abstract}
 In this work, we consider compressible single-phase flow problems in a porous
 media containing a fracture. In the latter, a non-linear pressure-velocity
 relation is prescribed. Using a non-overlapping domain decomposition procedure,
 we reformulate the global problem into a non-linear interface problem. {We then
 introduce two\FIX{ new}{} algorithms that are able to
 efficiently handle the non-linearity and the     coupling between the fracture
 and\FIX{ the}{} matrix,  both based on
linearization by the so-called L-scheme}. The first algorithm, named MoLDD, uses
the L-scheme \FIX{to resolve}{for}
 the non-linearity, requiring at each  iteration to solve the dimensional
 coupling via a domain decomposition approach.  The second algorithm, called
 ItLDD, uses a sequential approach in which the dimensional coupling is part of
 the linearization iterations.  For both algorithms, the computations are
 reduced only to the fracture by pre-computing, in an offline phase, a
 multiscale flux basis (the linear Robin-to-Neumann co-dimensional map), that
 represent the flux exchange between the fracture and the matrix.  We present
 extensive theoretical findings\FIX{ and in particular, t}{. T}he stability and the
 convergence of both schemes are obtained, where user-given parameters  are
 optimized to  minimise the number of iterations.  Examples on two important
 fracture models are computed with the library PorePy and agree with the
 developed theory.
\end{abstract}

%\vspace{3mm}

\vspace{3mm}

\noindent{\bf Key words:} Porous medium; reduced fracture models; generalized Forchheimer's laws; mortar mixed finite element; multiscale flux basis;
    non-linear interface problem; non-overlapping domain decomposition;
    L-scheme.
    %% keywords here, in the form: keyword \sep keyword

    %% PACS codes here, in the form: \PACS code \sep code

    %% MSC codes here, in the form: \MSC code \sep code
    %% or \MSC[2008] code \sep code (2000 is the default)

\pagestyle{myheadings} \thispagestyle{plain} \markboth{E. Ahmed, A. Fumagalli and A. Budi\v{s}a}{}

%\noindent{\bf Keywords:} Porous medium; fractures; Darcy-Forchheimer's model; multiscale mortar mixed finite element; cell-centered finite volume; nonmatching grids; Newton method; matrix-free approach.

%\begin{AMS}
%  65M08, 65M15, 65M50, 65M55, 76S05
%\end{AMS}

%\pagestyle{myheadings} \thispagestyle{plain} \markboth{E. Ahmed, A. Fumagalli and A. Budi\v{s}a}{A Multiscale Basis Approach for Darcy-Forchheimer fracture models}

%~~~~~~~~~~~~~~~~~~~~~~~~~~~~
%     INTRODUCTION
%
%~~~~~~~~~~~~~~~~~~~~~~~~~~~~
\section{Introduction}\label{sec:Intro}
 {Fractures  are  ubiquitous  in  porous  media  and  strongly affect  the  flow
 and transport. Several energy and environmental applications including carbon
 sequestration, geothermal energy,  and ground-water contamination involve
 flow  and transport problems  in a porous medium containing fractures.
 Typically, fractures are thin and long formations that correspond to a fast
 pathway along which the medium properties, such as permeability or porosity, differ
 from the adjacent formations (the rocks) \cite{MR1911534,MR3264361,Martin2005,MR3846260}.
 Specifically, the permeability of the fracture can be significantly higher than that of
 the host rock. As a consequence, while flow in the host rock can be well represented by the linear
 Darcy's law, flow in the fractures can potentially exhibit non-linear effects.
Models for such flow will thus be non-linear, but with the non-linear effects confined to specific parts of the domain,
moreover, these regions are characterized by an extreme aspect ratio.

\FIX{}{
In this paper, we consider models with fracture flow represented by an unstready Forchheimer's law ~\cite{MR1729811,MR3264361},
as an extension of the model in~\cite{ahmed2018multiscale,MR3264361}.
Our approach can straightforwardly be broadened to cover viscosity models for generalized Newtonian fluids \cite{MR3388812,MR2842139}.
%EK: Please check that the references make sense.
We  also  refer
to~\cite{MR2776916,AHMED2019103431,MR3671645,MR3624734,list2018upscaling}  for  extensions  to other
flow models.
To limit complications relating to mesh construction and computational cost,
we represent the fracture as a lower-dimensional object embedded in the full domain,
as introduced in~\cite{MR1911534, Boon2020}, and
we refer to the resulting model as mixed-dimensional.
% EK: Introduce non-immersed fractures here?
We then apply a domain decomposation (DD) approach, with the fracture forming an interface between subdomains.
The domain decomposition approach is beneficial for both
modeling, discretization and the formulation non-linear and linear solvers

Considerable research efforts have been conducted to mixed-dimensional fracture
models. Several numerical schemes for steady-state models have been proposed,  such
as the cell-centered finite volume scheme \cite{HAEGLAND20091740}, the  extended
finite element method  \cite{MR3631391}, the mimetic finite
difference \cite{MR3507274}, the block-centered finite difference method
\cite{MR3693346} and the mixed finite element (MFE) methods \cite{Martin2005,angelo_scotti_2012,Frih2012,MR3829517}.
Herein, we discretize the generalized mixed-dimensional Forchheimer problem with a mortar mixed finite element method
(MMFEM)~\cite{MR2557486,MR3869664,MR3765858}, combined with backword Euler in time.
See also \cite{Berre2018} for a review on fracture
models and discretization approaches.

While the dimension reduction reduces the number of cells necessary to represent the fracture,
the computational cost in solving the discrete non-linear problem can still be significant,
in particular for complex fracture geometries.
This calls for the construction of efficient solvers, and domain decomposition
facilitates the exploitation of both the geometric structure of the problem,
and the spatial localization of non-linearities.
See, for example, the application of DD to reduced Darcy \cite{MR1911534,MR3457700} and Darcy-Forchheimer \cite{MR2386967} fracture models.
In this work, we develop DD schemes \cite{ahmed2018multiscale,MR1911534,MR2434950}
to solve the non-linear problem resulting from our discretization.
To exploit the geometric structure of the problem, we reformulate it
as an interface problem by eliminating the subdomain variables, obtaining a non-linear system  to  solve at each time step. That is, the
resulting  problem posed only on the fracture is a \textit{superposition} of a
\textit{non-linear local} flow operator   within the fracture and a
\textit{linear non-local} one (Robin-to-Neumann type) handling the  flux contributions from the subdomains.
For this problem, two schemes are proposed, both based on the so-called $L$-scheme
method, a robust quasi-Newton method with a parameter $L>0$ mimicking the Jacobian ~\cite{MR2079503,MR3489128}.
Our two approaches differ in the way they handle the non-linearity, and in the degree of coupling between the
fracture interface and the surrounding subdomains.

The first algorithm named the Monolithic LDD-scheme
(MoLDD) employs the $L$-scheme as a linearization procedure. At each $L$-scheme iteration, an inner algorithm is used to solve the linear interface problem~\cite{MR3022024}. It can be a direct or an iterative method (e.g. a Krylov method). The action of the interface operator
requires solving subdomain problems with Robin boundary conditions on the
fracture.  This algorithm is \textit{Jacobian-free}, solving subdomain problems can be done in parallel and is later shown to be \textit{unconditionally stable}. We also obtain the condition number estimates of the inner DD system, the
contraction estimates and rates of convergence for the outer scheme.
However, there is still a computational overhead
associated with its non-local part \cite{ahmed2018multiscale,MR3022024,AHMED2019103431}, that is, the subdomain solves.
Increasing the non-linearity strength, the number of subdomains and refining the grids all
lead to an increase in the number of iterations and the number of subdomain
solves.

More recently, the $L$-scheme  has gained attention as an efficient
solver to  treat simultaneously  non-linear and coupling  effects in complex
problems \cite{MR3771899, MR3827264}.
Building on this idea, we propose the second algorithm, referred as the Iterative
LDD-scheme (ItLDD). In ItLDD, the
$L$-scheme is now synchronizing  linearization  and domain decomposition
through one-loop algorithm \cite{ahmed2019adaptive_2,brun2019monolithic}. At each iteration it has the
cost of the sequential approach, yet it converges to the fully monolithic
approach. This way we reduce the computational cost as no inner DD solver is
required  and  only a modest number  of subdomain solves that can be done in parallel are needed at each
iteration. This algorithm increases
local to non-local cooperation and saves computational time if one process is dominating
the whole problem. This  approach   differs  from  the  one  commonly  used  in DD methods for
non-linear interface problems \cite{ahmed2017posteriori,MR3261611}.

The second contribution of this paper concerns the robust and efficient
implementation of the two LDD schemes. The dominant computational
costs in these schemes comes from the subdomain solves and, to reduce this,
we use the multiscale flux basis framework
from~\cite{MR2557486}. The fact that   the non-linearity
in the system appears
within the local operator on the fracture motivates that the
linear non-local contribution from the subdomains can be expressed  as a
 superposition of multiscale basis functions~\cite{MR2557486,MR3577939,ahmed2018multiscale}, in
 spirit of \textit{reduced basis}~\cite{URBAN2012716, MR3644446,MR3853612}.
%Our implementation extends  the
% multiscale flux basis implementation of the MMFEM, for Darcy flow developed
% in~\cite{MR2557486},  for Darcy-Stokes in~\cite{MR3577939}, and particularly for
% reduced Darcy-Forchheimer model in~\cite{ahmed2018multiscale}. The provided
% implementation then distinguishes  the linear and non-linear contributions in the
% overall calculation and employs the multiscale flux basis functions for the
% linear part of the problem  before the non-linear interface iterations begin.
\FIX{These}{This} multiscale flux basis consists of the flux (velocity trace) response from each fracture pressure degrees of freedom.  They are
computed by  solving a \textit{fixed} number of \textit{steady} Robin subdomain
problems, equal to the number of fracture pressure degrees of freedom
per subdomain.  An inexpensive linear combination of the multiscale flux basis
functions  then replaces the  subdomain solves in any inner/outer iteration of
the  algorithms. This step of freezing  the contributions
from the rock matrices can be cheaply evaluated and easily implemented in the algorithms.
\FIX{That is,  i}{I}t permits
reusing the same basis functions  to compare  MoLDD with  ItLDD,  to simulate various linear and non-linear
models for flow in the fracture and to vary the fracture permeability.
In case of a fixed time step, the multiscale flux basis is constructed only once
in the offline phase. %This  should  be  kept  in  mind  also for our numerical results reported in the last section.
Numerical results are computed with the library PorePy \cite{Keilegavlen2019a}.

\FIX{This paper is organized as follows: }{} \FIX{Firstly, t}{In
\Cref{sec:model_problem} the model problem is presented. T}he   approximation of
problem~\eqref{weak_mixed_formulation}  using
the MMFEM in space  and  a backward Euler scheme in time is given in~\Cref{sec:problem_formulation}. Also, the reduction of this
mixed-dimensional scheme into a non-linear interface one is introduced.  The
LDD-schemes  are formulated in~\Cref{sec:algorithms}.
In~\Cref{sec:analysis_moldd} and~\Cref{sec:analysis_itldd}, the  analysis of the
schemes is presented. \Cref{sec:Mufbi} describes the implementation based on the
multiscale flux basis framework.  Finally, we \FIX{showcase}{show} the performance of  our methods on
several numerical examples in~\Cref{sec:examples} and draw the conclusions
in~\Cref{sec:conclusion}.}

%Using the
%techniques of domain decomposition~\cite{MR2386967}, a first reduced model has
%been proposed for   flow in  a porous medium with  a fracture  in which  the
%flow in the fracture is governed by the Darcy-Forchheimer's law while that in
%the surrounding matrix is governed by Darcy's law (see
%also~\cite{MR1911534,MR3264361,Martin2005,MR3846260}).
\section{Model problem}\label{sec:model_problem}

Let $\Omega$ be a
bounded domain in $\RR^{d}$, $d\in\{2,3\}$,  with \FIX{}{Lipschitz continuous}
boundary $\Gamma\eq\partial\Omega$. Furthermore, let $T$ be the final time
simulation and $I\eq(0,T)$. Suppose that $\gamma \subset\Omega$ is a
$(d-1)$-dimensional \FIX{}{non self-intersecting} surface
\FIX{}{of class $C^2$}
that divides  $\Omega$ into two subdomains:
$\Omega=\Omega_{1}\cup\Omega_{2}\cup\gamma$, where $\gamma\eq\partial
\Omega_{1}\cap\partial \Omega_{2}$ and $\Gamma_{i}\eq\partial \Omega_{i}\cap
\partial \Omega$, $i\in\{1,2\}$.  Assume \FIX{that}{} the flow in $I\times\Omega_{i}$,
$i\in\{1,2\}$, is \FIX{described}{given} by\FIX{the system of equations}{}
\bse\label{Initial_system_porous}\begin{alignat}{5}
    \label{Initial_system_d_p}&\vK_{i}^{-1}\vecu_{i}+\nabla p_{i}=\mathbf{0}&&\quad \textn{in} \;  I\times\Omega_{i},\\
    \label{Initial_system_c_p}&\partial_{t}p_{i}+\nabla\cdot\vecu_{i} =f_{i} &&\quad \textn{in} \; I\times\Omega_{i},\\
    \label{Initial_system_bd_p}&p_{i} =0 &&\quad \textn{in} \;I\times\Gamma_{i},\\
    \label{Initial_system_IC_p}&p_{i}(\cdot,0) =p^{0}_{i} &&\quad \textn{in} \;   \Omega_{i},
\end{alignat}\ese
 and  in $I\times\gamma$ by the following equations

\bse\label{Initial_system_fracture}\begin{alignat}{4}
%\left. \begin{array}{rll}
    \label{Initial_system_d_f}&\xi(\vecu_{\gamma})+\vK^{-1}_{\gamma}\vecu_{\gamma}+\nabla_{\tau} p_{\gamma}=\mathbf{0} &&\quad \textn{in} \;  I\times\gamma,&\\
    \label{Initial_system_c_f}&\partial_{t}p_{\gamma}+\nabla_{\tau}\cdot\vecu_{\gamma} =f_{\gamma}+\left(\vecu_{1}\cdot\vecn_{1}+\vecu_{2}\cdot\vecn_{2}\right) &&\quad \textn{in} \;  I\times\gamma,&\\
    \label{Initial_system_bd_f}&p_{\gamma} =0 &&\quad \textn{in} \;I\times\partial\gamma,&\\
    \label{Initial_system_IC_f}&p_{\gamma}(\cdot,0) =p^{0}_{\gamma} &&\quad \textn{in} \;   \gamma,
\end{alignat}\ese
where the transmission  conditions \FIX{for $i\in\{1,2\}$, are prescribed}{ for $i\in\{1,2\}$ are}
\begin{alignat}{4}
\label{Initial_system_r_interface}-\vecu_{i}\cdot\vecn_{i}+\alpha_{\gamma} p_{i} &=\alpha_{\gamma} p_{\gamma} &&\quad \textn{on} \;  I\times \gamma.
\end{alignat}
Here, $\nabla_{\tau}$ denotes  the $(d-1)$-dimensional   gradient  operator  in  the  plane  of $\gamma$, $\vK_{\gamma}$  is the hydraulic conductivity tensor in the fracture, $\vK_{i}$ is the hydraulic conductivity tensor in the subdomain
$\Omegai$ and
$\vecn_{i}$ is the outward unit normal vector to $\partial\Omegai$, $i\in\{1,2\}$.
The function  $\xi$ is a  non-linear
function extending the classical Forchheimer flow
to more general laws.
% \begin{figure}[htb]
%     \centering
%     \resizebox{0.2\textwidth}{!}{\input{figures/single_domain.pdf_tex}}%
%     \caption{Graphical example of problem
%     \eqref{Initial_system_porous}-\eqref{Initial_system_r_interface}.}%
%     \label{fig:single_domain}
% \end{figure}
\FIX{In~\eqref{Initial_system_r_interface}, t}{T}he coefficient $\alpha_{\gamma}$ is proportional to  the normal component of the \FIX{of the physical fracture}{fracture} permeability and inversely proportional to the fracture width/aperture.
The functions $f_{\gamma}$ and $f_{i}$, $i\in\{1,2\}$, are source terms in the fracture and in the matrix,
respectively.  For simplicity, we have imposed a homogeneous Dirichlet condition
on the boundary $\partial\Omega$.
%Other choices are possible, however we avoid boundary conditions that make the problem singular.
Finally,  $p^{0}_{\gamma}$ and  $p^{0}_{i}$, $i\in\{1,2\}$, are  initial conditions.

The system~\eqref{Initial_system_porous}--\eqref{Initial_system_r_interface} is
a mixed-dimensional model for flow in fractured porous media: the
equations~\eqref{Initial_system_d_p}-\eqref{Initial_system_c_p} are Darcy's law and mass
conservation equations in the subdomain $\Omegai$,
while \eqref{Initial_system_d_f}-\eqref{Initial_system_c_f} are \FIX{lower-dimensional non-Darcy flow }{}generalized
Forchheimer's law and mass conservation in the fracture of co-dimension one. Together these
equations form a non-standard transmission problem where the fracture system
sees the  surrounding matrix system through the source term
$\vecu_{1}\cdot\vecn_{1}+\vecu_{2}\cdot\vecn_{2}$
in~\eqref{Initial_system_c_f} and the\FIX{rock}{} matrix system communicates to the
fracture through Robin \FIX{type}{}\FIX{boundary}{interface}
conditions~\eqref{Initial_system_r_interface}. Note that the restriction to only
one fracture is made \FIX{for the ease of}{to simplify the} presentation, but the model and the
analysis below can \FIX{straightforwardly}{easily} be extended to fracture \FIX{}{networks} \cite{ahmed2018multiscale,Martin2005}.

\subsection{Assumptions on the data and weak formulation}
Let $D\subseteq\Omega$. For $s\geq0$, $||\cdot||_{s,D}$ stands for the usual
Sobolev norm on $H^{s}(D)$. If $s=0$, $||\cdot||_{D}$ is simply the $ L^{2}$
norm and $(\cdot,\cdot)_{D}$  stands for the $ L^{2}$ scalar product.  We define
the weak spaces in $\Omega_i$ for $i\in\{1,2\}$ as
\begin{align*}
    \mathbf{V}_{i}\defeq \left\{ \mathbf{v} \in \mathbf{H}(\textn{div}, \Omega_{i}):
    \mathbf{v}\cdot\mathbf{n}_i \in L^2(\gamma) \right\}
    \quad \text{and} \quad
    M_{i}\defeq L^{2}(\Omega_{i}),
\end{align*}
where we have implicitly considered the trace operator of $\mathbf{v} \cdot \mathbf{n}_i$.
Moreover, we introduce their global versions by
 $\mathbf{V}\defeq\bigoplus_{i=1}^{2}\mathbf{V}_{i}\textn{ and } M\defeq\bigoplus_{i=1}^{2} M_{i}.$ The mixed spaces on the fracture $\gamma$, are
$\mathbf{V}_{\gamma}\defeq\mathbf{H}(\textn{div}_{\tau}, \gamma)\textn{ and }  M_{\gamma}\defeq L^{2}(\gamma)$.  For simplicity of notation, we introduce the jump $\jump{\cdot}$
   given  by $\jump{\vecu \cdot
   \vecn}\defeq\vecu_{1}\cdot\vecn_{1}+\vecu_{2}\cdot\vecn_{2}$.
 and the functions $\vK$ and $f$  in $\Omega_{1}\cup \Omega_{2}$  such that $\vK_{i}\eq\vK|_{\Omegai}$, and
 $f_{i}\eq f|_{\Omegai}$, $i\in\{1,2\}$. \FIX{Throughout the paper, we assume that the following
 assumptions hold true:}{We assume:}
 \begin{enumerate}[label=(\textbf{A\arabic*})]\label{assumpA}
\item $\xi:\RR\rightarrow\RR$ is $C^{1}$, strictly increasing and
Lipschitz continuous, i.e., there exist $\xi_{m}>0$ and $L_{\xi}$ such that
$\xi_{m}\leq \xi^\prime(\vecu) \leq L_{\xi}<\infty$.
 Otherwise, we ask bounded flux \FIX{for the differential
problem}{in}~\eqref{Initial_system_porous}--\eqref{Initial_system_r_interface}, i.e,
$\vecu\in \left[L^{\infty}(\Omega)\right]^{d}$, when $\xi$ is\FIX{ simply}{} an
increasing function ($\xi^\prime \geq 0$), and let
$L_{\xi}\eq\sup_{|\vecu|\leq C_{\xi}}{\xi^\prime(\vecu)}$,
where $C_{\xi}\eq\sup_{\vecx\in \overline{\Omega}}|\vecu(\vecx)|$. \label{ass_xi}
\item $\vK:\RR^{d}\rightarrow\RR^d$ is assumed to be constant in time and bounded: there exist
$c_{\vK}>0$ and $C_{\vK}$ such that $ \zeta^{\textn{T}}\vK^{-1}(\vecx)\zeta  \geq
c_{\vK}|\zeta|^{2}$ and $|\vK^{-1}(\vecx)\zeta|\leq  C_{\vK}|\zeta|$ for a.e.
$\vecx\in\Omega_{1}\cup\Omega_{2}$, $\forall\zeta\in\RR^{d}$.\label{ass_K_omega}
\item $\vK_{\gamma}:\RR^{d-1}\rightarrow\RR^{d-1}$ is assumed to be constant in time and bounded; there exist
$c_{\vK,\gamma}>0$ and $C_{\vK,\gamma}$ such that $ \zeta^{\textn{T}}\vK^{-1}_{\gamma}(\vecx)\zeta \geq
c_{\vK,\gamma}|\zeta|^{2}$ and $|\vK^{-1}_{\gamma}(\vecx)\zeta|\leq  C_{\vK,\gamma}|\zeta|$ for a.e. $\vecx\in\gamma$,
$\forall\zeta\in\RR^{d-1}.$\label{ass_K_gamma}
\item The Robin parameter $\alpha_{\gamma}$ is a  strictly positive constant: $\alpha_{\gamma} > 0$.
 \item The initial conditions are such that $p_{i}^{0}\in L^{2}(\Omegai)$, $i\in\{1,2\}$, and
$p^{0}_{\gamma}\in L^{2}(\gamma)$. The source terms are such that $f_{i}\in
L^{2}(0,T;L^{2}(\Omegai))$, $i\in\{1,2\}$, and
$f_{\gamma}\in L^{2}(0,T;L^{2}(\gamma))$. For simplicity we further assume that $f$ and $f_{\gamma}$ are piecewise constant in
    time with respect to the temporal mesh introduced\FIX{ in
    \Cref{subsec:Discretization}}{}.
\end{enumerate}
\begin{rem}[On assumptions] The Lipschitz-continuity  of~$\xi$ is not true when
the function $\xi$ (therefore the flux) is unbounded, as it is the case
for generalized Forchheimer's law. However, for bounded flux $\vecu$, this can
be verified. Otherwise, this assumption can be recovered by truncating
the original function $\xi$. Obviously, the solution of the truncated
problem will not in general solve the original one. See, for example~ \cite{MITRA20191722}. \end{rem}
  We
introduce   the  bilinear forms $a_{i}:\mathbf{V}_{i}\times \mathbf{V}_{i}\rightarrow\RR$, $b_{i}:\mathbf{V}_{i}\times M_{i}\rightarrow\RR$ and
 $c_{i}:M_{i}\times M_{i}\rightarrow\RR$, $i\in\{1,2\}$,
\begin{alignat}{2}\label{bil:omegai}
& a_{i}(\vecu,\vecv)\eq (\vK^{-1}\vecu,\vecv)_{\Omega_{i}}+{\alpha_{\gamma}^{-1}}(
\vecu\cdot\vecn_{i},\vecv\cdot\vecn_{i})_{\gamma},\quad b_{i}(\vecu,q)\eq(\nabla\cdot\vecu,q)_{\Omega_{i}},\quad c_{i}(p,q)\eq(p,q)_{\Omega_{i}}.
\end{alignat}
On the fracture, we define the bilinear forms $a_{\gamma}:\mathbf{V}_{\gamma}\times \mathbf{V}_{\gamma}\rightarrow\RR$, $b_{\gamma}:\mathbf{V}_{\gamma}\times M_{\gamma}\rightarrow\RR$ and
 $c_{\gamma}:M_{\gamma}\times M_{\gamma}\rightarrow\RR$,
 \begin{alignat}{2}
& a_{\gamma}(\vecu,\vecv)\eq (\vK^{-1}_{\gamma}\vecu,\vecv)_{\gamma},\quad
b_{\gamma}(\vecu,\mu)\eq(\nabla_{\tau}\cdot\vecu,\mu)_{\gamma},\quad c_{\gamma}(\lambda,\mu)\eq(\lambda,\mu)_{\gamma}.
\end{alignat}
With the above notations, a weak solution
of~\eqref{Initial_system_porous}--\eqref{Initial_system_r_interface} is given
in the following.
\begin{df}[Mixed-dimensional weak solution]\label{def:mixed_weak_solution}
Assume that~\textn{\Assum}  hold true. We say that
$(\vecu, p)\in  L^{2}(0,T;\mathbf{V})\times H^{1}(0,T;M)$ and
$(\vecu_{\gamma}, p_{\gamma})\in  L^{2}(0,T;\mathbf{V}_{\gamma})\times H^{1}(0,T;M_{\gamma})$ form a weak solution
of~\eqref{Initial_system_porous}--\eqref{Initial_system_r_interface}
if it satisfies \FIX{}{weakly the initial
conditions~\eqref{Initial_system_IC_p} and
\eqref{Initial_system_IC_f}}, and for each $i \in \{1,2\}$,
\bse \label{weak_mixed_formulation} \begin{alignat}{2}
\label{weak_mixed_formulation_m_d}&a_{i}(\vecu,\vecv)-b_{i}(\vecv,p)+(
p_{\gamma}, \vecv\cdot\vecn_{i})_{\gamma}=0\quad&&\forall\vecv\in \mathbf{V}_{i},\\
\label{weak_mixed_formulation_m_c}&c_{i}(\partial_{t}p,q)+b_{i}(\vecu,q)=(f,q)_{\Omega_{i}} \quad&&\forall q\in  M_{i},\\
\label{weak_mixed_formulation_f_f}& (\xi(\vecu_{\gamma}),\vecv)_{\gamma}+a_{\gamma}(\vecu_{\gamma},\vecv)
-b_{\gamma}( \vecv,p_{\gamma} ) =0\quad&& \forall\vecv\in \mathbf{V}_{\gamma},\\
\label{weak_mixed_formulation_f_c}&c_{\gamma}(\partial_{t}p_{\gamma},\mu)+b_{\gamma}( \vecu_{\gamma},\mu )-(\jump{\vecu \cdot \vecn}, \mu )_\gamma =
( f_{\gamma}, \mu )_\gamma \quad && \forall \mu\in M_{\gamma}.
\end{alignat}
\ese
% \bse\label{weak_mixed:Initial_system}\begin{alignat}{2}\label{weak_mixed:Initial_system_m}
% &(p(\cdot,0),q)_{\Omegai}=(p_{i}^{0},q)_{\Omegai} \quad&&\forall q\in  M_{i},\\
% \label{weak_mixed:Initial_system_f}&(p_{\gamma}(\cdot,0),\mu)_{\gamma}=(p_{\gamma}^{0},\mu)_{\gamma} \quad&&\forall \mu\in  M_{\gamma}.
% \end{alignat}\ese
\end{df}
\FIXX{}{
\begin{rem}
In this paper we assume that a weak solution of
Definition~\ref{def:mixed_weak_solution} exists. For the \FIX{static}{steady-state} model and\FIX{and when
$\xi$ stems from}{}
the classical Forchheimer's law,
% i.e., $\xi(\vecu_{\gamma})\eq \beta_{\gamma}\II|\vecu_{\gamma}|\vecu_{\gamma}$,  with $\II\in\RR^{d\times d}$ is the identity matrix and $\beta_{\gamma}>0$ is the Forchheimer coefficient,
the existence and uniqueness of a weak solution was
shown in~\cite{MR3264361}. That of the linear case, i.e., $\xi\eq0$, was
studied in~\cite{MR3457700}. Through the paper, we will also consider the case
of continuous pressure across $\gamma$ by letting
$\alpha_{\gamma}\rightarrow\infty$ in \eqref{Initial_system_r_interface}. For
this\FIX{case}{}, we will use Definition~\ref{def:mixed_weak_solution} for the \FIX{week}{weak}
formulation  \FIX{keeping in mind that}{changing} in~\eqref{bil:omegai} \FIX{$a_{i}$ is
simply}{to be} $a_{i}(\vecu,\vecv)\eq (\vK^{-1}\vecu,\vecv)_{\Omega_{i}}$ and
$\mathbf{V}_{i} \defeq \mathbf{H}(\textn{div}, \Omega_i)$ for $i\in\{1,2\}$.
\end{rem}
}

% The system \eqref{weak_mixed_formulation}  can be seen
% as a transmission problem,
% with non-standard and nonlocal boundary conditions between the subdomains $\Omegai$,
% $i=1,2$.
\FIX{\subsection{Goal and positioning of the paper}
The mixed-dimensional
problem~\eqref{Initial_system_porous}--\eqref{Initial_system_r_interface} is an
alternative to the possibility  to use   fine  grids  of  the  spatial
discretization  in the (physical)  fracture and thus reduces the computational
cost. This idea was introduced in~\cite{MR1911534} for highly permeable fractures
and in~\cite{MR2512496}  for fractures that may be much more permeable than the
surrounding medium or nearly impermeable~\cite{MR3264347,MR3448951,MR3307584}.
Particularly,   for  ``fast-path'' fractures,
Darcy's law is   replaced  by  the classical Darcy-Forchheimer's law as
established in~\cite{MR3264361}. We  also  refer
to~\cite{MR2776916,AHMED2019103431,MR3671645,MR3624734,list2018upscaling}  for  extensions  to other
flow models.  Here, we extend the model in~\cite{ahmed2018multiscale,MR3264361}
to unsteady non-Darcy flow generalized Forchheimer's law. {The work can be extended straightforwardly to viscosity models for generalized Newtonian fluids,
including the Power law, the Cross model and the Carreau model~\cite{MR3388812,MR2842139}.}

Considerable research efforts have been conducted to mixed-dimensional fracture
models.   Several numerical schemes for steady models have been proposed,  such
as a cell-centered finite volume scheme in~\cite{HAEGLAND20091740}, an  extended
finite element method  in~\cite{MR3631391}, a mimetic finite
difference~\cite{MR3507274} and a  block-centred finite difference method
in~\cite{MR3693346}. We also mention several contributions on the application of
mixed methods,   on conforming and non-conforming
grids~\cite{Martin2005,angelo_scotti_2012,Frih2012,MR3829517};
see~\cite{Berre2018}  for detailed  account of major contribution on fracture
models and discretization approaches.  The  aforementioned  numerical
approaches     solve \FIX{coupled}{}  fracture-matrix models  monolithically, which
leads to a large system, particularly \FIX{if}{for} mixed finite element (MFE)
methods\FIX{are
adopted~}{}\cite{MR3693346}. This is especially the case when incorporating
different \FIX{}{type of} equations\FIX{ varied in type}{}, \FIX{such
as}{like} coupling linear and non-linear systems, and   where often interface
conditions involve additional   variables. {Domain decomposition (DD) is an
elegant tool for modeling \FIX{such a}{} multi-physics problem\FIX{}{s} and can provide an
effective tool for reducing computational \FIX{complexity}{cost} and performing parallel
calculations.} See~\cite{MR3450068,MR1857663}\FIX{ for a
general introduction of the subject}{}. In~\cite{MR2386967}, the authors combine
\FIX{domain decomposition}{DD} techniques with \FIX{mixed finite element}{MFE} methods for the
reduced Darcy-Forchheimer fracture model (see~\cite{MR1911534,MR3457700} for the
linear case).

In this paper, we propose  efficient DD methods to
solve~\eqref{weak_mixed_formulation}  combining
the   mortar mixed finite element method
(MMFEM)~\cite{MR2557486,MR3869664,MR3765858} with non-overlapping domain
decomposition~\cite{ahmed2018multiscale,MR1911534,MR2434950} and  the $L$-scheme
method~\cite{MR2079503,MR3489128}. Our  method first
reformulates~\eqref{weak_mixed_formulation}
into   an  interface problem by eliminating the  subdomain variables. The
resulting  problem posed only on the fracture is a \textit{superposition} of a
\textit{non-linear local} flow operator   within the fracture and a
\textit{linear non-local} one handling the  flux contribution from the subdomains
(Robin-to-Neumann type operator).  {After approximating this problem with   the
MMFEM in space and the backward Euler scheme in time,  we obtain a non-linear system  to  solve at each time step. A first algorithm is
then built with the $L$-scheme employed as a linearization procedure; a robust quasi-Newton method with a parameter $L>0$ mimicking the Jacobian from the Newton method~\cite{MR2079503,MR3489128}.} At each iteration
of the $L$-scheme, an \textit{inner iterative
algorithm}, such as  GMRes or any Krylov solver, is  used to solve the linear
interface problem~\cite{MR3022024}. The action of the interface operator
requires solving subdomain problems with Robin boundary condition on the
fracture.  This algorithm referred to henceforth as the Monolithic LDD-scheme
(MoLDD) is  \textit{Jacobian-free} and   subdomain solves are  done in parallel.
This LDD scheme will be  shown to be \textit{unconditionally stable}. Stability
and condition number estimates of the inner DD system are obtained as well as
contraction estimates and rates of convergence for the outer scheme. While MoLDD offers an elegant  outer-inner approach  to
solve the interface-fracture problem, there is a computational overhead
associated with its non-local part (DD), see
\textit{e.g.}~\cite{ahmed2018multiscale,MR3022024}. Precisely,  the dominant
computational cost in this approach is measured by the number of subdomain
solves; increasing the non-linearity or DD strength   and refining the grids both
lead to an increase in the number of iterations and the number of subdomain
solves.

More recently, the $L$-scheme  has gained attention as an efficient
solver to  treat simultaneously  non-linear and coupling  effects in complex
problems. See for example \cite{MR3771899} for an application of the $L$-scheme
on a non-linear DD problem and~\cite{MR3827264} on a  non-linear coupling one.
Building further on this idea, we propose a   second algorithm, in which   the
DD  step is part of the linearization iterations (see~\cite{ahmed2019adaptive_2,brun2019monolithic} for related works). In other words,  the
$L$-scheme is now synchronizing  linearization  and domain decomposition
through one-loop algorithm.    This  approach  referred as the Iterative
LDD-scheme (ItLDD) differs  from  the  one  commonly  used  when  dealing  with
non-linear interface problems in the context of DD~\cite{ahmed2017posteriori,MR3261611}. At each iteration it has the
cost of the sequential approach, yet it converges to the fully monolithic
approach. This approach reduces the computational costs as no inner DD solver is
required  and  only a modest number  of subdomain solves  is  required at each
iteration, which still done in parallel. This algorithm increases
\textit{local to non-local cooperation}  and saves time if one process is dominating
the whole problem.

The second contribution of this paper concerns the robust and efficient
implementation of the LDD schemes above. Precisely, the dominant computational
costs in these schemes comes from the subdomain solves. To  reduce this
computational cost, we make use of the multiscale flux basis framework
from~\cite{MR2557486}. The fact that   the non-linearity
in~\eqref{weak_mixed_formulation}   is only
within the local operator on the fracture, we can adopt the notion that the
linear non-local contribution from the rock subdomains can be expressed  as a
 superposition of multiscale basis functions~\cite{MR2557486,MR3577939,ahmed2018multiscale}.
%Our implementation extends  the
% multiscale flux basis implementation of the MMFEM, for Darcy flow developed
% in~\cite{MR2557486},  for Darcy-Stokes in~\cite{MR3577939}, and particularly for
% reduced Darcy-Forchheimer model in~\cite{ahmed2018multiscale}. The provided
% implementation then distinguishes  the linear and non-linear contributions in the
% overall calculation and employs the multiscale flux basis functions for the
% linear part of the problem  before the non-linear interface iterations begin.
These multiscale flux basis consists of the flux (or velocity trace) response from each fracture pressure degrees of freedom.  They are
computed by  solving a \textit{fixed} number of \textit{steady} Robin subdomain
problems, which is equal to the number of fracture pressure degrees of freedom
per subdomain.  An inexpensive linear combination of the multiscale flux basis
functions  then replaces the  subdomain solves in any inner/outer iteration of
the  algorithms. This step of \textit{freezing}  the contributions on the flow
from the rock matrices can be  \textit{easily coded,  cheaply evaluated},  and
\textit{efficiently used} in  \textit{all the algorithms}.
\FIX{That is,  i}{I}t permits
reusing the same basis functions  to compare  MoLDD with  ItLDD  as well as to simulate various linear and non-linear
models for flow in the fracture by varying $\xi$ and finally  {exploring
high and low permeable fractures}. This is in total \textit{conformity} with the
spirit of \textit{reduced basis}~\cite{URBAN2012716, MR3644446,MR3853612}.
Crucially, if a fixed time step is used,  our multiscale flux basis applied to
a non-linear time-dependent problem are constructed only once
in the offline phase. This  should  be  kept  in  mind  also for our numerical
results reported in the last section. Numerical results are computed with the
library PorePy \cite{Keilegavlen2017a}.

\subsection{Outline of the paper}
\FIX{This paper is organized as follows: }{} \FIX{Firstly, t}{T}he   approximation of
problem~\eqref{weak_mixed_formulation}  using
the MMFEM in space  and  a backward Euler scheme in time is given in
Section~\ref{sec:problem_formulation}. Also, the reduction of this
mixed-dimensional scheme into a non-linear interface one is introduced.  The
LDD-schemes  are formulated in~\Cref{sec:algorithms}.
In~\Cref{sec:analysis_moldd} and~\Cref{sec:analysis_itldd}, the  analysis of the
schemes is presented. \Cref{sec:Mufbi} describes the implementation based on the
multiscale flux basis framework.  Finally, we \FIX{showcase}{show} the performance of  our methods on
several numerical examples in~\Cref{sec:examples} and draw the conclusions
in~\Cref{sec:conclusion}.
}
%~~~~~~~~~~~~~~~~~~~~~~~~~~~~~~~~~~~
%     PRESENTATION OF THE PROBLEM
%
%~~~~~~~~~~~~~~~~~~~~~~~~~~~~~~~~~~~

\FIX{\section{The DD formulation}}{}
\section{The domain decomposition formulation}
\label{sec:problem_formulation}
As explained earlier, it is natural to solve the mixed-dimensional
problem~\eqref{weak_mixed_formulation}  using
domain decomposition techniques, especially  as  these  methods  make it
possible  to  take  different   time  grids  in the  subdomains  and  in  the
fracture.

\subsection{Discretization in space and time}\label{subsec:Discretization}

We introduce in this section the partitions of $\Omega$ and $(0,T)$, basic
notation, and the  mortar mixed finite element discretization of the
mixed-dimensional
problem~\eqref{weak_mixed_formulation} .

Let $\Tau_{\hi}$ be a partition of the subdomain $\Omegai$ into either
$d$-dimensional simplicial or rectangular elements.   Moreover, we assume that
these meshes are such that $\Tau_{h}=\displaystyle{\cup_{i=1}^{2}\Tau_{\hi}}$
forms a conforming finite element mesh on  $\Omega$.  We also let
$\Tau_{\hg}$  be either a partition of the fracture $\gamma$ induced
by~$\Tau_{h}$ or slightly coarser.  Denote $h$  as the maximal
mesh size of both  $\Tau_{h}$ and $\Tau_{\hg}$.
For an integer $N\geq 0$, let
$\left(\tau^{n}\right)_{0\leq n\leq N}$ denote a sequence of positive real
numbers corresponding to the discrete time steps such that
$T=\sum_{n=1}^{N}\tau^{n}$. Let $t^{0}\eq0$, and $t^{n}\eq\sum_{j=1}^{n}\tau^{j}, \
1 \leq n\leq N$, be the discrete times. Let $I^n\eq(t^{n-1},t^n], \ 1 \le n \le
N$.

\FIX{\subsubsection{Finite-dimensional spaces and projection operators}}{}
For  the approximation of scalar unknowns, we
introduce \FIX{the   approximation spaces}{}
$M_{h}\eq M_{h,1}\times M_{h,2}$ and $M_{\hg}$, where $M_{\hi}$, $i\in\{1,2\}$, and
$M_{\hg}$ are the spaces of piecewise constant functions associated with $\Tau_{\hi}$,
$i\in\{1,2\}$ and $\Tau_{\hg}$, respectively. For  the vector unknowns, we
introduce \FIX{the   approximation spaces}{}
$\mathbf{V}_{h}\eq \mathbf{V}_{h,1}\times \mathbf{V}_{h,2}$ and $\mathbf{V}_{\hg}$, where
$\mathbf{V}_{\hi}$, $i\in\{1,2\}$ and
$\mathbf{V}_{\hg}$, are the lowest-order Raviart-Thomas-N\'{e}d\'{e}lec finite elements spaces
associated with  $\Tau_{\hi}$,  $i\in\{1,2\}$ and $\Tau_{\hg}$, respectively.
Thus, $\mathbf{V}_{h}\times M_{h}\subset \mathbf{V}\times M$ and
$\mathbf{V}_{\hg}\times M_{\hg}\subset \mathbf{V}_{\gamma}\times M_{\gamma}$. For all of the above spaces,
\begin{equation}\label{conservation}
 \nabla\cdot \mathbf{V}_{h}=M_{h},\quad\textnormal{ and }\quad\nabla_{\tau}\cdot \mathbf{V}_{\hg}=M_{\hg},
\end{equation}
and there exists a  projection
$\tilde{\Pi}_{i}:\mathbf{H}^{1/2+\epsilon}(\Omegai)\cap\mathbf{V}_{i}\rightarrow\mathbf{V}_{\hi}$, $i\in\{1,2\}$,
for any $\epsilon>0$, satisfying among other properties \cite{MR3577939} that for any $\vecu\in \mathbf{H}^{1/2+\epsilon}(\Omegai)\cap\mathbf{V}_{i}$
\begin{align}\label{properties_Pi}
&(\nabla\cdot (\vecu-\tilde{\Pi}_{i}\vecu),q)_{\Omegai} = 0 &&\forall q\in M_{\hi},\\
&((\vecu-\tilde{\Pi}_{i}\vecu)\cdot\vecn_{i},\vecv\cdot\vecn_{i})_{\partial\Omegai
}  = 0 &&\forall \vecv\in \mathbf{V}_{\hi}.
%  &||\tilde{\Pi}_{i}\vecu||_{\Omegai}\lesssim( ||\vecu||_{\epsilon,\Omegai}+||\nabla\cdot\vecu||_{\Omegai}).&&
\end{align}
We also note that if $\vecu\in\mathbf{H}^{\epsilon}(\Omegai)\cap\mathbf{V}_{i}$, $0< \epsilon <1$, $\tilde{\Pi}_{i}\vecu$ is well-defined~\cite{MR2842713} and
\begin{alignat}{2}\label{estimate_regularity_Pi}
%  &(\nabla\cdot (\vecu-\tilde{\Pi}_{i}\vecu),q)_{\Omegai}=0,&&\quad\forall q\in M_{\hi},\\
%  &((\vecu-\tilde{\Pi}_{i}\vecu),\vecv\cdot\vecn_{i})_{\partial\Omegai }=0,&&\quad\forall \vecv \mathbf{V}_{\hi},\\
 &||\tilde{\Pi}_{i}\vecu||_{\Omegai}\lesssim ||\vecu||_{\epsilon,\Omegai}+||\nabla\cdot\vecu||_{\Omegai}.
\end{alignat}
We introduce  $\mathcal{Q}_{\hi}$ the $L^{2}$-projection onto $\mathbf{V}_{\hi}\cdot\vecn_{i}$
 and denote $\mathcal{Q}_{\hi}^{\textn{T}}:\mathbf{V}_{\hi}\cdot\vecn_{i}\rightarrow M_{\hg}$
as the $L^{2}$-projection  from  the  normal
velocity  trace on the subdomains onto  the  mortar space $M_{\hg}$. Thus, for
all $\lambda\in M_{\hg}$ \FIX{the condition}{}
\begin{equation}\label{solvability}
|| \lambda||_{\gamma}\lesssim || \mathcal{Q}_{h,1}\lambda||_{\gamma}+|| \mathcal{Q}_{h,2}\lambda||_{\gamma},
\end{equation}
can \FIX{easily}{} be verified  if the mesh on the fracture $\Tau_{\hg}$ matches the one
resulting from the surrounding subdomains, or if $\Tau_{\hg}$ is chosen
slightly coarser~\cite{MR3829517,MR2306414}. Note that~\eqref{conservation} can
be   satisfied by choosing any of the usual MFE pairs.
The
condition~\eqref{solvability} can be  satisfied even if  the space $M_{\hg}$ is not much richer  than  the  space  of  normal  traces  on $\gamma$ of elements of $\mathbf{V}_{h}$~\cite{MR3577939,MR2557486}.\FIX{\subsubsection{The discrete scheme}}{}
The  fully  discrete scheme of the mixed-dimensional
formulation~\eqref{weak_mixed_formulation}
based on  the MMFEM in space and the backward Euler scheme in time is defined
through the following.
\begin{df}[The mixed-dimensional scheme]\label{Def:discrete_scheme}
 At each time step $n\geq1$,  assuming $(p^{n-1}_{\hg},p^{n-1}_{h})$ is given,
 we look for  $(\vecu^{n}_{h}, p^{n}_{h})\in \mathbf{V}_{h}\times M_{h}$ and
 $(\vecu^{n}_{\hg}, p^{n}_{\hg})\in \mathbf{V}_{\hg}\times M_{\hg}$ such that, for $i\in\{1,2\},$
 \bse\label{md_discrete_system_fracture}\begin{alignat}{4}
  \label{md_semi_system_d_p}&a_{i}(\vecu^{n}_{h},\vecv)-b_{i}(\vecv,p^{n}_{h})=-(
p^{n}_{\hg}, \vecv\cdot\vecn_{i})_{\gamma}  &&\quad \forall  \vecv\in \mathbf{V}_{h}.&&\\
  \label{md_semi_system_c_p}&c_{i}(p^{n}_{h}-p^{n-1}_{h},q)+\tau^{n}b_{i}(\vecu^{n}_{h},q)=\tau^{n}(f^{n},\mu)_{\Omegai} &&\quad \forall  q\in M_{h},&&\\
   \label{md_semi_system_d_f}&(\xi(\vecu^{n}_{\hg}),\vecv)+a_{\gamma}(\vecu^{n}_{\hg},\vecv)-b_{\gamma}(\vecv,p^{n}_{\hg})=0  &&\quad \forall  \vecv\in \mathbf{V}_{\hg},&&\\
 \label{md_semi_system_c_f}&c_{\gamma}(p^{n}_{\hg}-p^{n-1}_{\hg},\mu)+\tau^{n}b_{\gamma}(\vecu^{n}_{\hg},\mu)-
 \tau^{n}(\jump{\vecu_{h}^{n}\cdot\vecn},\mu)_{\gamma}=\tau^{n}(f_{\gamma}^{n},\mu)_\gamma &&\quad \forall  \mu\in M_{\hg}.&
%  \label{semi_system_IC_f}\lambda^{0}_{\hg} &=p_{\gamma} &&\quad \textn{in} \;   \gamma,\\
%  \label{semi_system_bd_f} \lambda^{n}_{\hg} &=0 &&\quad \textn{in} \;   \partial\gamma.&
 \end{alignat}\ese
 \end{df}

\subsection{Reduction into an interface problem}
Following the algorithm in~\cite{ahmed2018multiscale}, we reduce the
mixed-dimensional scheme in Definition~\ref{Def:discrete_scheme} to a
non-linear interface one\FIX{posed}{} on $\gamma$.\FIX{which can be solved using an appropriate combination of a linearization method and an iterative Krylov solver. }{} For $i\in\{1,2\}$, we let
\begin{alignat}{2}\label{SolSub:decomposition}
 p^{n}_{\hi}=p^{*}_{\hi}(\lambda^{n}_{\hg})+
 \bar{p}^{n}_{\hi}\quad\text{and}\quad
 \vecu^{n}_{\hi}={\vecu}^{*}_{\hi}(\lambda^{n}_{\hg})+
 \bar{\vecu}^{n}_{\hi},\quad 1\leq n\leq N,
\end{alignat}
where for $\lambda^{n}_{\hg}\in M_{\hg}$,
$(\vecu^{*}_{\hi}(\lambda^{n}_{\hg}), p^{*}_{\hi}(\lambda^{n}_{\hg}))\in
 \mathbf{V}_{\hi}\times M_{\hi}$  solves
\bse \label{subdo1_weak_mixed_formulation_disc} \begin{align}
    &a_{i}(\vecu^{*}_{\hi}(\lambda^{n}_{\hg}),\vecv)-b_{i}(\vecv,p^{*}_{\hi}(\lambda^{n}_{\hg})) =-(
    \lambda_{\hg}^{n}, \vecv\cdot\vecn_{i})_{\gamma} && \forall\vecv\in
    \mathbf{V}_{\hi},\\
    &c_{i}(p^{*}_{\hi}(\lambda^{n}_{\hg}),q)+\tau^{n}b_{i}(\vecu^{*}_{\hi}(\lambda^{n}_{\hg}),q)
    = 0 && \forall q\in  M_{\hi},
%&(p^{*}_{i}(\lambda_{\gamma}(\cdot,0)),\mu)_{\Omegai}=0, \quad&&\forall \mu\in  M_{i},
 \end{align}
\ese
and $(\bar{\vecu}^{n}_{\hi}, \bar{p}^{n}_{\hi})\in \mathbf{V}_{\hi} \times M_{\hi}$ solves
\bse \label{subdo2_weak_mixed_formulation_disc} \begin{align}
&a_{i}(\bar{\vecu}^{n}_{i},\vecv)-b_{i}(\vecv,\bar{p}^{n}_{\hi}) =0 && \forall\vecv\in \mathbf{V}_{\hi},\\
&c_{i}(\bar{p}^{n}_{\hi}-p^{n-1}_{\hi},q)+\tau^{n}b_{i}(\bar{\vecu}^{n}_{\hi},q)
=\tau^{n}(f^{n},\mu)_{\Omega_{i}} && \forall q\in  M_{\hi},\\
&(\bar{p}^{0}_{\hi},\mu)_{\Omegai} = (p_{\hi}^{0},\mu)_{\Omegai} && \forall \mu\in  M_{\hi}.
\end{align}
\ese
Define the forms
$s_{\gamma,i}:M_{\hg}\times M_{\hg}\rightarrow\RR$, $i\in\{1,2\}$,
$s_{\gamma}:M_{\hg}\times M_{\hg}\rightarrow\RR$, and
$g^{n}_{\gamma}: M_{\hg}\rightarrow\RR$ as
\bse\label{bilinear_forms_discrete}\begin{align}
s_{\gamma,i}(\lambda_{\hg}^{n},\mu) & \eq
 (\mathcal{S}^{\textn{RtN}}_{\gamma,i}(\lambda_{\hg}^{n}),\mu)_{\gamma}\eq -(\vecu_{\hi}^{*}(\lambda_{\hg}^{n})\cdot\vecn_{i},\mu)_{\gamma},\\
s_{\gamma}(\lambda_{\hg}^{n},\mu) & \eq(\mathcal{S}^{\textn{RtN}}_{\gamma}(\lambda_{\hg}^{n}),\mu)_{\gamma}\eq\sum_{i=1}^{2}s_{\gamma,i}(\lambda_{\hg}^{n},\mu),\\
g^{n}_{\gamma}(\mu) & \eq(g_{\gamma}^{n},\mu)_{\gamma}\eq \sum_{i=1}^{2}(\bar{\vecu}^{n}_{\hi}\cdot\vecn_{i},\mu)_{\gamma},
\end{align}\ese
where $\mathcal{S}^{\textn{RtN}}_{\gamma,i}:M_{\hg}
\rightarrow M_{\hg}$, $1\leq i\leq 2$, and
$\mathcal{S}^{\textn{RtN}}_{\gamma}\eq\sum_{i=1}^{2}\mathcal{S}^{\textn{RtN}}_{\gamma,i}$
are   Robin-to-Neumann type operators.
% , i.e., $\mathcal{S}^{\textn{RtN}}_{i}(\lambda)=$
% \begin{alignat*}{2}
% \mathcal{S}^{\textn{RtN}}_{i}\,\colon\,H^{1}(0,T;L^{2}(\gamma))\times L^{2}(0,T;L^{2}(\Omegai))\times H^{1}_{\gamma}(\Omegai)&\rightarrow L^{2}(0,T;L^{2}(\gamma)),\\
%  \mathcal{S}^{\textn{RtN}}_{i}(\lambda,f,p_{i})&\mapsto\vecu_{i}\cdot\vecn_{i}|_{\gamma},
% \end{alignat*}
% where $(p_{i},\vecu_{i})$, $i=1,2$,  solves the subdomain problem
% \bse\label{Initial_system_sub}\begin{alignat}{5}
% \label{Subd_system_c_p}\partial_{t}p_{i}+\nabla\cdot\vecu_{i} &=f_{i} &&\quad \textn{in} \; I\times
% \Omega_{i},\\
% \label{Subd_system_d_p}\vK_{i}^{-1}\vecu_{i}+\nabla p_{i}&=0&&\quad \textn{in} \;  I\times\Omega_{i},\\
% \label{Subd_system_bd_p}p_{i} &=0 &&\quad \textn{in} \;   I\times\Gamma_{i},\\
% \label{Subd_system_Ro_p}-\vecu_{i}\cdot\vecn_{i}+\alpha_{\gamma} p_{i} &=\lambda &&\quad \textn{on} \;  I\times \gamma,\\
% \label{Subd_system_IC_p}p_{i}(\cdot,0) &=p_{i} &&\quad \textn{in} \;   \Omega_{i}.
% \end{alignat}\ese
\FIX{Obviously}{Consequently}, the  operator
$\mathcal{S}^{\textn{RtN}}_{\gamma,i}$
 is linear. % The weak formulation of Definition~\eqref{def:mixed_weak_solution}  can be rewritten as a
It is \FIX{easy}{possible} to verify that the non-linear mixed-dimensional
scheme~\eqref{md_discrete_system_fracture} is equivalent to the  non-linear interface scheme.
 \begin{df}[The  reduced scheme]\label{def:reduced scheme}
\FIX{Given}{For} $n\geq1$ and  $\lambda^{n-1}_{\hg}$,
find $(\vecu^{n}_{\hg}, \lambda^{n}_{\hg})\in \mathbf{V}_{\hg}\times M_{\hg}$
such that%
 \bse\label{discrete_system_fracture}\begin{alignat}{4}
 %\left. \begin{array}{rll}
 \label{semi_system_d_f}&(\xi(\vecu^{n}_{\hg}),\vecv)_{\gamma}+a_{\gamma}(\vecu^{n}_{\hg},\vecv)-b_{\gamma}(\vecv,\lambda^{n}_{\hg})=0
 &&\quad \forall  \vecv\in \mathbf{V}_{\hg},\\
 \label{semi_system_c_f}&c_{\gamma}(\lambda^{n}_{\hg}-\lambda^{n-1}_{\hg},\mu)+\tau^{n}b_{\gamma}(\vecu^{n}_{\hg},\mu)+
 \tau^{n}s_{\gamma}(\lambda^{n}_{\hg},\mu)=\tau^{n}(f_{\gamma}^{n}+g^{n}_{\gamma},\mu)_{\gamma}
 &&\quad \forall  \mu\in M_{\hg}.&
%  \label{semi_system_IC_f}\lambda^{0}_{\hg} &=p_{\gamma} &&\quad \textn{in} \;   \gamma,\\
%  \label{semi_system_bd_f} \lambda^{n}_{\hg} &=0 &&\quad \textn{in} \;   \partial\gamma.&
 \end{alignat}\ese
\end{df}
% This  mixed interface problem  is a superposition of a \textit{nonlinear local flow} problem and a \textit{linear nonlocal} one.
%  Here, we adopt the $L$-scheme as
%  a nonlinear solver which lies between fixed-point and Newton solver. It can be interpreted also as a stabilized Picard  or a quasi-Newton solver.  In the next section, we  provide two algorithms that use
%  the $L$-scheme for the iterative solution
%  of~\eqref{discrete_system_fracture}.

\section{Robust L-type Domain-Decomposition (LDD) schemes}
\label{sec:algorithms}
\FIX{}{In this section, we propose two  iterative approaches  based on the
$L$-scheme to solve~\eqref{discrete_system_fracture}. The first approach entails an inner-outer
procedure of the form $\textit{linearize}\rightarrow \textit{solve the
    DD}\rightarrow update$, so that the $L$-scheme is used for the outer loop and
an inner solver (direct or iterative) for the inner loop. The second approach is a
one-loop procedure in which   the $L$-scheme acts  iteratively and
simultaneously on the linearization and DD.} For the  presentation of the algorithms, we shall denote the time step simply by $\tau$, keeping in mind it may depend on $n$.
\subsection{A monolithic LDD scheme}\label{subsec:monolithic}
  The monolithic LDD scheme (MoLDD)  \FIX{used to solve  the interface
  problem}{for}~\eqref{discrete_system_fracture} reads:
 \begin{algo}[The MoLDD scheme]~\label{monolithic_ldd}
Given \FIX{the initial data}{$ n=0 $}, $(\lambda^{0}_{\hg},p^{0}_{h})\in M_{\hg}\times M_{h}$, stabilization parameter $L_{\gamma}>0$ and tolerance $\epsilon>0$,

\textn{\textbf{Do}}
{\setlist[enumerate]{topsep=0pt,itemsep=-1ex,partopsep=1ex,parsep=1ex,leftmargin=1.5\parindent,font=\upshape}
\begin{enumerate}
 \item Increase $n\eq n+1$.
\item Choose an initial approximation $\vecu^{n,-1}_{\hg}\in \mathbf{V}_{\hg}$ of $\vecu^{n}_{\hg}$. Set $k\eq-1$.
\item \textn{\textbf{Do}}
\begin{enumerate}
 \item Increase $k\eq k+1$.
\item Compute
$(\vecu^{n,k}_{\hg}, \lambda^{n,k}_{\hg})\in   \mathbf{V}_{\hg}\times M_{\hg}$
such that, for all $(\vecv, \mu)\in \mathbf{V}_{\hg}\times M_{\hg}$,
\bse\label{lscheme_system_fracture}\begin{alignat}{4}
%\left. \begin{array}{rll}
 \label{lscheme_system_d_f}&(\xi(\vecu^{n,k-1}_{\hg})+L_{\gamma}(\vecu^{n,k}_{\hg}-\vecu^{n,k-1}_{\hg}),\vecv)_{\gamma}+a_{\gamma}(\vecu^{n,k}_{\hg},\vecv)
 -b_{\gamma}(\vecv,\lambda^{n,k}_{\hg})=0,\\
\label{lscheme_system_c_f}
&c_{\gamma}(\lambda^{n,k}_{\hg}-\lambda^{n-1}_{\hg},\mu)+\tau b_{\gamma}(\vecu^{n,k}_{\hg},\mu)+ \tau
s_{\gamma}(\lambda^{n,k}_{\hg},\mu)=\tau (f_{\gamma}^{n}+g^{n}_{\gamma},\mu)_{\gamma}.
\end{alignat}\ese
\end{enumerate}
% \noindent\hspace*{0.4cm} c) If $\dfrac{\|\vecsigma^{k} - \vecsigma^{k-1})\|_{\infty}}{\| \vecsigma^{k-1}\|_{\infty}}\leq \epsilon$, finish the computation.\\
\textn{\textbf{while}}
${\|(\vecu^{n,k}_{\hg}, \lambda^{n,k}_{\hg}) -
(\vecu^{n,k-1}_{\hg}, \lambda^{n,k-1}_{\hg})\|_{\gamma}}\geq \epsilon\|(\vecu^{n,k-1}_{\hg}, \lambda^{n,k-1}_{\hg})\|_{\gamma}$.
\item Update the subdomain solutions  via~\eqref{SolSub:decomposition}.
\end{enumerate}
 \textn{\textbf{while}} $n\leq N$.
}
\end{algo}
\FIX{\begin{rem}
\end{rem}}{The  advantages of Algorithm~\ref{monolithic_ldd} are multiple:  (i) the
algorithm is Jacobian-free and independent of the initialization,
     (ii) we can reuse the existing $d$- and $(d-1)$-dimensional
codes for solving linear Darcy problem, and (iii) optimal convergence rate is obtained
with a stabilization amount determined\FIX{ efficiently }{} through $L_{\gamma}$.}

The  MoLDD scheme involves the solution  of a linear  Darcy interface
problem~\eqref{lscheme_system_fracture} at each iteration\FIX{ $k\geq0$}{}.
\FIX{To see that,
w}{W}e introduce the linear operators
$\mathbf{A}_{L,\gamma}: \mathbf{V}_{\hg}\rightarrow \mathbf{V}_{\hg}$ and
$\mathbf{B}_{\gamma}: \mathbf{V}_{\hg}\rightarrow M_{\hg}$, defined as
$(\mathbf{A}_{L,\gamma}\vecu,\vecv)_{\gamma}\eq a_{\gamma}(\vecu,\vecv)+L_{\gamma}(\vecu,\vecv)_{\gamma}$,
$\forall \vecu,\vecv\in \mathbf{V}_{\hg}$,
and $(\mathbf{B}_{\gamma}\vecu, q)\eq b_{\gamma}(\vecu,q)$, $\forall \vecv\in \mathbf{V}_{\hg}$, $\forall q\in M_{\hg}$.
\FIX{Now~}{}\eqref{lscheme_system_fracture} becomes
\begin{align}\label{compact_system}
\mathcal{A}_{\textn{DD}}\begin{bmatrix}
 \vecu^{n,k}_{\hg}\\[2mm]
 \lambda^{n,k}_{\hg}\\[2mm]
 %p^{k}_{T}
 \end{bmatrix}:=
\begin{bmatrix}
\mathbf{A}_{L,\gamma}& \mathbf{B}_{\gamma}^{\textn{T}}\\[2mm]
 \mathbf{B}_{\gamma}& \mathcal{S}^{\textn{RtN}}_{\gamma}+ \mathbf{I}/\tau\\[2mm]
 %S_{T}& 0&0\\[2mm]
\end{bmatrix}\begin{bmatrix}
 \vecu^{n,k}_{\hg}\\[2mm]
 \lambda^{n,k}_{\hg}\\[2mm]
 %p^{k}_{T}
 \end{bmatrix} =  \begin{bmatrix}
 L_{\gamma}\vecu^{n,k-1}_{\hg}-\xi(\vecu^{n,k-1}_{\hg})\\[2mm]
 g^{n}_{\gamma}+f^{n}_{\gamma}+ \lambda^{n-1}_{\hg} / \tau
\end{bmatrix}:=\mathcal{F}_{\gamma},
\end{align}
which can \FIX{}{be solved using a direct or a Krylov type method, such as GMRes or MINRes.}
%Given an initial
%guess $\vecw_{\hg}^{(0)}=[\vecu^{n,k,0}_{\hg},\lambda^{n,k,0}_{\hg}]^{\textn{T}}$, the GMRes
%algorithm generates a  sequence of iterates $\{\vecw_{\hg}^{(m)}\}_{m\geq 1}$, where $\vecw_{\hg}^{(m)}$ is a solution of the finite-dimensional minimization problem
%\begin{alignat}{2}\label{gmres_pseudocode}
% \vecw_{\hg}^{(m)}\eq\underset{\Psi \, \in \,
% \vecw_{\hg}^{(0)}+\mathcal{K}_{m}}{\argmin}||\mathcal{F}_{\gamma}-\mathcal{A}_{\textn{DD}}\Psi||_{\gamma},
% \FIX{}{
% \quad\text{with}\quad \mathcal{K}_{m}\eq\spanvec (
% \vecr_{\gamma}^{(0)}, \mathcal{A}_{\textn{DD}}\vecr_{\gamma}^{(0)},\cdots,
% \mathcal{A}_{\textn{DD}}^{m-1}\vecr_{\gamma}^{(0)} )}
%\end{alignat}
%as an approximate solution to~\eqref{compact_system}, where
%$\mathcal{K}_{m}$ is the $m$-th
%Krylov subspace generated by the initial residual $\vecr_{\gamma}^{(0)}\eq\mathcal{F}_{\gamma}-\mathcal{A}_{\textn{DD}}\vecw_{\hg}^{(0)}$
%\FIX{, i.e.,
%\begin{alignat*}{2}
% \mathcal{K}_{m}\eq\mathcal{K}_{m}(\mathcal{A}_{\textn{DD}}, \vecr_{\gamma}^{(0)})\eq\spanvec (
% \vecr_{\gamma}^{(0)}, \mathcal{A}_{\textn{DD}}\vecr_{\gamma}^{(0)},\cdots,
% \mathcal{A}_{\textn{DD}}^{m-1}\vecr_{\gamma}^{(0)} ).
%\end{alignat*}}{.}
\FIX{At each GMRes iteration $m\geq1$}{Regardless of the choice of inner method}, we need to evaluate
the action of the Robin-to-Neumann type operator $ \mathcal{S}^{\textn{RtN}}_{\gamma}$
via \eqref{bilinear_forms_discrete}, representing physically the flow
contributions from the subdomains by
solving Robin subdomain problems~\eqref{subdo1_weak_mixed_formulation_disc}.
%Therefore, the GMRes algorithm is implemented in the matrix-free context~\cite{ahmed2018multiscale, MR3022024,MR2557486}.
We  summarize the evaluation of the interface operator by the following steps:
\begin{algo}[Evaluating  the action of $ \mathcal{S}^{\textn{RtN}}_{\gamma}$]\label{Eval_op}~
{
\setlist[enumerate]{topsep=0pt,itemsep=-1ex,partopsep=1ex,parsep=1ex,leftmargin=1.5\parindent,font=\upshape}
\begin{enumerate}
    \item Enter interface data $\lambda_{\hg}$.
    \item \textn{\textbf{For}} $i=1:2$
% ~\noindent\hspace*{0.25cm} \textn{(A)} Increase $i\eq i +1$.\\
    \begin{enumerate}
        \item Project mortar pressure onto subdomain boundary, i.e.,
        $\varphi_{\hg,i}=\mathcal{Q}_{\hi}(\lambda_{\hg}).$
        \item Solve the subdomain problem~\eqref{subdo1_weak_mixed_formulation_disc}
        with Robin data~$\varphi_{\hg,i}$.
        \item Project  the resulting  flux onto  the  space $M_{\hg}$, i.e.,
        $\mathcal{S}^{\textn{RtN}}_{\gamma,i}(\lambda_{\hg})=
        -\mathcal{Q}_{\hi}^{\textn{T}}\vecu^{*}_{\hi}(\varphi_{\hg,i})\cdot\vecn_{i}.$
    \end{enumerate}
    \textn{\textbf{EndFor}}
    \item Compute the flow contribution from the subdomains to \FIX{the
    fracture}{$\gamma$} given by the flux jump across the fracture,
    $$\mathcal{S}^{\textn{RtN}}_{\gamma}(\lambda_{\hg})=\displaystyle\sum_{i\in\{1,2\}}\mathcal{S}^{\textn{RtN}}_{\gamma,i}(\lambda_{\hg}).$$
\end{enumerate}}
\end{algo} % \begin{rem}[Computational cost]\label{rem:cost_MoLDD}
\vskip -2mm
\FIX{\begin{rem}
\end{rem}}{The evaluation of  $\mathcal{S}^{\textn{RtN}}_{\gamma}$  dominates the total
computational costs in Algorithm~\ref{monolithic_ldd} (step 2(b) of
Algorithm~\ref{Eval_op}).  The number of subdomain solves required by this
method at each time step $n\geq 1$ is approximately equal to
$\sum_{k=1}^{N^{n}_{\textnormal{Lin}}} N^{k}_{\textnormal{DD}}$, where
$N_{\textnormal{Lin}}$ is the number of  iterations of the $L$-scheme, and
$N^{k}_{\textnormal{DD}}$ denotes the number of inner DD iterations. To set up the
right-hand side term $f_{\gamma}^{n}$, we  need to solve once in the subdomains at
each time step $n\geq 1$.}
% \end{rem}
\subsection{An \FIX{ robust }{}iterative LDD-scheme}
An alternative \FIX{   LDD-scheme to solve  the interface
problem~\eqref{discrete_system_fracture}}{to MoLDD scheme} is to let
the $L$-scheme  act iteratively not only on the non-linearity\FIX{as in Algorithm~\ref{monolithic_ldd}}{},
but also on the fracture-matrix coupling. Additional stabilization term is then required for
the inter-dimensional coupling. % This iterative scheme  reads:
\begin{algo}[The ItLDD scheme]~\label{splitting_ldd}
Given \FIX{}{n=0}, $(\lambda^{0}_{\hg},p^{0}_{h})\in M_{\hg}\times M_{h}$, the stabilization parameters $(L_{\gamma,p},L_{\gamma,u})>0$, and the tolerance $\epsilon>0$.

\textn{\textbf{Do}}
{\setlist[enumerate]{topsep=0pt,itemsep=-1ex,partopsep=1ex,parsep=1ex,leftmargin=1.5\parindent,font=\upshape}}
\begin{enumerate}
 \item Increase $n\eq n+1$.
\item Choose an initial approximation $(\vecu^{n,-1}_{\hg}, \lambda^{n,-1}_{\hg})\in  \mathbf{V}_{\hg}\times M_{\hg}$ of $(\vecu^{n}_{\hg}, \lambda^{n}_{\hg})$. Set $k\eq-1$.
\item \textn{\textbf{Do}}
\begin{enumerate}
 \item Increase $k\eq k+1$.
\item Compute
$(\vecu^{n,k}_{\hg}, \lambda^{n,k}_{\hg})\in  \mathbf{V}_{\hg}\times M_{\hg}$
such that, for all $(\vecv, \mu)\in \mathbf{V}_{\hg}\times M_{\hg}$,
\bse\label{lddscheme_system_fracture}\begin{alignat}{4}
%\left. \begin{array}{rll}
\label{lddscheme_system_d_f} &(\xi(\vecu^{n,k-1}_{\hg}) +
L_{\gamma,u}(\vecu^{n,k}_{\hg}-\vecu^{n,k-1}_{\hg}),\vecv)_{\gamma}
 +a_{\gamma}(\vecu^{n,k}_{\hg},\vecv)
 -b_{\gamma}(\vecv,\lambda^{n,k}_{\hg})=0.\\
\nonumber &c_{\gamma}(\lambda^{n,k}_{\hg}-\lambda^{n-1}_{\hg},\mu)+\tau
L_{\gamma,p}(\lambda^{n,k}_{\hg}-\lambda^{n,k-1}_{\hg},\mu)_{\gamma}
+ \tau
s_{\gamma}(\lambda^{n,k-1}_{\hg},\mu)\\
\label{lddscheme_system_c_f}&\qquad\qquad\qquad\qquad\qquad\qquad\qquad\qquad\quad
+\tau b_{\gamma}(\vecu^{n,k}_{\hg},\mu)=\tau (f_{\gamma}^{n}+g^{n}_{\gamma},\mu)_{\gamma},
 \end{alignat}\ese
\end{enumerate}
% \noindent\hspace*{0.4cm} c) If $\dfrac{\|\vecsigma^{k} - \vecsigma^{k-1})\|_{\infty}}{\| \vecsigma^{k-1}\|_{\infty}}\leq \epsilon$, finish the computation.\\
\textn{\textbf{while}} ${\|(\vecu^{n,k}_{\hg}, \lambda^{n,k}_{\hg}) -
(\vecu^{n,k-1}_{\hg}, \lambda^{n,k-1}_{\hg})\|_{\gamma}}\geq \epsilon{\|(\vecu^{n,k-1}_{\hg}, \lambda^{n,k-1}_{\hg})\|_{\gamma}}$.
\item Update the subdomain solutions  via~\eqref{SolSub:decomposition}.
\end{enumerate}
\textn{\textbf{while}} $n\leq N$.
\end{algo} % \begin{rem}[Computational cost]\label{rem:cost_ItLDD}
\FIX{The linear problem~\eqref{lddscheme_system_fracture} is solved with the GMRes iterations~\eqref{gmres_pseudocode}. It requires at  each iteration $k\geq 1$ only one solve per subdomain to evaluate the action of $ \mathcal{S}^{\textn{RtN}}_{\gamma}$ via Algorithm~\ref{Eval_op} at the previous iteration, and this at each time step $n\geq 1$.}{The linear problem~\eqref{lddscheme_system_fracture} can again be solved by a direct or iterative method, and it requires applying the operator $ \mathcal{S}^{\textn{RtN}}_{\gamma}$ via Algorithm~\ref{Eval_op}, at each time step $n\geq 1$.} % \end{rem}
\FIX{\begin{rem}[Advantages of ItLDD-scheme]\end{rem}}{The advantages of the Algorithm~\ref{splitting_ldd} are:
 (i) at each iteration $k\geq 1$,  the systems in the fracture and the rock
 matrices cooperate sequentially  in one loop \FIX{and}{}
    (ii) optimal convergence rate is obtained with
 precise stabilization parameters $(L_{\gamma,p},L_{\gamma,u})$, and (iii) existing codes for $d$- and $(d-1)$-dimensional
 Darcy  problems can be cheaply reused\FIX{ for practical
 simulations}{}.}

%\begin{rem}[Asynchronous time-stepping]
%In Appendix~\ref{appendix}, we show that   ItLDD scheme  can be extended to non-conforming time discretization adapted to  the flows in the fracture and  rock matrices.
%\end{rem}

% \begin{rem}[Computational cost]
%
% \end{rem}

% let $k\geq 1$ and assume that    $(\lambda^{k-1},\vecu^{k-1}_{\gamma})$ is given. Find $(\lambda^{k},\vecu^{k}_{\gamma})\in  M_{\hg}\times \mathbf{V}_{\hg}$ such that,
% \bse\label{lddcheme_system_fracture}\begin{alignat}{4}
% %\left. \begin{array}{rll}
% \label{lddscheme_system_c_f}(\lambda^{n,k}_{\hg}-\lambda^{n-1}_{\hg},\mu)+\tau (\nabla_{\tau}\cdot\vecu^{n,k}_{\hg},\mu)+ \tau (\mathcal{S}_{\hg}(\lambda^{n,k-1}_{\hg}),\mu)&=\tau (f_{\gamma}^{n}+g^{n}_{\gamma},\mu), &&\quad \forall  \mu\in M_{\hg},&&\\
%  \label{lddscheme_system_d_f}(b_{\gamma}(\vecu^{n,k-1}_{\hg})+L_{\gamma}(\vecu^{n,k}_{\hg}-\vecu^{n,k-1}_{\hg})+\vK^{-1}_{\gamma}\vecu^{n,k}_{\hg},\vecv)-(\lambda^{n,k}_{\hg},\nabla_{\tau}\cdot\vecv)&=0,  &&\quad \forall  \vecv\in \mathbf{V}_{\hg},&
% \end{alignat}\ese

% \toremove{known}
% in~\cite{MR3264361}\toremove{ for a  slightly different functional setting}.
% % Furthermore,  the following equivalence result is easy to show.
%  \begin{thm}[Knabner\&Roberts~\cite{MR3264361}]
% The problem~\eqref{weak_mixed_formulation} admits a unique solution $(\vecu,p)\in \mathbf{V}\times M$ and
% $(\vecu_{\gamma},p_{\gamma})\in \mathbf{V}_{\gamma}\times M_{\gamma}$.
%  \end{thm}

\section{Analysis of  MoLDD-scheme}\label{sec:analysis_moldd}
The complete analysis of~Algorithm~\ref{monolithic_ldd} will be carried out in
two steps: (i) we first study  the stability of the iterate DD scheme (inner
solver) and  estimate the condition number, and (ii) we prove the convergence of
the LDD scheme (outer solver), show  the well-posedness of the discrete scheme,
estimate the  convergence rate and subsequently determine the optimal
stabilization parameter.\FIX{Throughout the paper, we will frequently use the
standard identity
\begin{equation}
 \label{identity}
 (a-b) \cdot a=\dfrac{1}{2}\left(a^{2}-b^{2}+(a-b)^{2}\right),\quad a,b\in\RR,
\end{equation}
and inequality
\begin{equation}
 \label{young}
 |ab|\leq \dfrac{1}{2\delta}a^2+\dfrac{2}{\delta}b^2,\quad a,b,\delta\in\RR,\,\delta>0.
\end{equation}}{}
A key point in the analysis of the methods below
 are  inverse inequalities.
\begin{lem}[Inverse inequalities]
There exist positive constants $C_{\textn{dTr}},C_{\textn{inv}}>0$ depending only on the shape regularity of the mesh such that
 \begin{alignat}{2}\label{dtrace_ineq}
 &||\vecu_{h}\cdot\vecn||_{\partial\Omegai}\leq C_{\textn{dTr}} h^{-1/2}||\vecu_{h}||_{\Omegai} \qquad &&\forall\vecu_{h}\in\mathbf{V}_{\hi},\\
 \label{inv_ineq}&||\nabla_{\tau}\cdot\vecu_{\hg}||_{\gamma}\leq C_{\textn{inv}} h^{-1}||\vecu_{\hg}||_{\gamma} &&\forall\vecu_{\hg}\in\mathbf{V}_{\hg}.
\end{alignat}
\end{lem}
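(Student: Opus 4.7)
Both inequalities are classical inverse-type estimates for Raviart--Thomas--N\'ed\'elec (RTN) spaces, and the natural route is a scaling argument element by element via the Piola transform. I would begin by fixing a reference simplex or rectangle $\hat{K}$ and the corresponding reference RTN space $\mathbf{V}(\hat{K})$ (lowest order). Since $\mathbf{V}(\hat{K})$ is finite-dimensional, all norms on it are equivalent; in particular there exist constants $\hat{C}_1,\hat{C}_2>0$, depending only on the reference configuration, such that for every $\hat{\vecu}\in\mathbf{V}(\hat{K})$
\begin{equation*}
\|\hat{\vecu}\cdot\hat{\vecn}\|_{\partial\hat{K}}\le \hat{C}_1\|\hat{\vecu}\|_{\hat{K}},\qquad
\|\hat{\nabla}\cdot\hat{\vecu}\|_{\hat{K}}\le \hat{C}_2\|\hat{\vecu}\|_{\hat{K}}.
\end{equation*}
These two reference inequalities are the only nontrivial input; everything else is bookkeeping of scaling factors.

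Next I would push these bounds to a physical element $K\in\Tau_{h,i}$ (respectively $K\in\Tau_{h,\gamma}$ for the second inequality) via the affine map $F_K:\hat{K}\to K$ and the contravariant Piola transform $\vecu\circ F_K = (\det DF_K)^{-1} DF_K\,\hat{\vecu}$. Standard properties of this transform give $\|\vecu\|_K\simeq |\det DF_K|^{1/2}\|DF_K\|^{-1}\|\hat{\vecu}\|_{\hat K}$ type estimates, together with $\vecu\cdot\vecn\,\mathrm{d}s = (\hat{\vecu}\cdot\hat{\vecn})\,\mathrm{d}\hat{s}$ on boundaries and $(\nabla\cdot\vecu)\circ F_K = (\det DF_K)^{-1}\hat{\nabla}\cdot\hat{\vecu}$. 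Using shape regularity of $\Tau_h$, which controls $\|DF_K\|$, $\|DF_K^{-1}\|$ and $|\det DF_K|$ in terms of $h_K\sim h$, one obtains the scaling rules
\begin{equation*}
\|\vecu\cdot\vecn\|_{\partial K}^2 \lesssim h_K^{-1}\|\vecu\|_K^2,\qquad
\|\nabla\cdot\vecu\|_K^2 \lesssim h_K^{-2}\|\vecu\|_K^2,
\end{equation*}
with constants depending only on the reference element and the shape-regularity parameter of the mesh.

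Finally I would sum the element-wise inequalities. For the discrete trace inequality, summing over $K\in\Tau_{h,i}$ and restricting the boundary contributions to faces in $\partial\Omega_i$ gives
\begin{equation*}
\|\vecu_h\cdot\vecn\|_{\partial\Omega_i}^2 \le \sum_{K\in\Tau_{h,i}} \|\vecu_h\cdot\vecn\|_{\partial K}^2 \lesssim h^{-1}\sum_{K}\|\vecu_h\|_K^2 = h^{-1}\|\vecu_h\|_{\Omega_i}^2,
\end{equation*}
which is~\eqref{dtrace_ineq} with $C_{\mathrm{dTr}}$ depending only on shape regularity. The same summation, now over elements of $\Tau_{h,\gamma}$ and using the tangential divergence scaling on the $(d-1)$-dimensional manifold $\gamma$ (with $F_K$ now mapping a reference $(d-1)$-simplex to a fracture cell), yields~\eqref{inv_ineq}. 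The only mild point that requires care is the tangential divergence on $\gamma$: since $\gamma$ is assumed $C^2$ and $\Tau_{h,\gamma}$ is quasi-uniform, the local chart can be treated as affine up to higher-order perturbations, so the Piola/scaling argument transfers verbatim and contributes only to the constant $C_{\mathrm{inv}}$. This curvature bookkeeping is the only step where one needs to be a little careful; the rest is standard.
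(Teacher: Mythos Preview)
The paper does not prove this lemma; it states both inequalities as standard results without argument, invoking them as preliminary tools for the subsequent analysis. Your proposal supplies the classical justification: norm equivalence on a finite-dimensional reference RTN space, transfer to physical elements via the contravariant Piola transform, and summation using shape regularity. This is the standard route, and it is correct.

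One small remark: for \eqref{inv_ineq} on $\gamma$, note that in the paper's setting $\Tau_{h,\gamma}$ is a mesh of the fracture $\gamma$, and the elements are flat $(d-1)$-simplices or rectangles, so the reference-to-physical map is genuinely affine element by element; the $C^2$ regularity of $\gamma$ is used for the continuous problem but does not enter the inverse inequality itself, and no curvature correction is actually needed. With that simplification your argument goes through verbatim.
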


\subsection{Analysis of the  DD step}
To simplify\FIX{ the analysis}{}, we rewrite \FIX{problem~}{}\eqref{lscheme_system_fracture}:
find $(\vecu^{n,k}_{\hg}, \lambda^{n,k}_{\hg})$ $\in\mathbf{V}_{\hg}\times M_{\hg}$ so that
\begin{alignat}{2}\label{compact_form_lscheme}
 \mathcal{A}_{\gamma}((\vecu^{n,k}_{\hg}, \lambda^{n,k}_{\hg}),(\vecv, \mu))+s_{\gamma}(\lambda^{n,k}_{\hg},\mu)=
 \mathcal{F}^{n,k-1}_{\gamma}(\vecv, \mu) \qquad \forall (\vecv, \mu)\in \mathbf{V}_{\hg} \times M_{\hg},
\end{alignat}
where \FIX{}{$\mathcal{A}_{\gamma}$ is the linearized flow system on the fracture and $s_{\gamma}$ is the flow contribution from the rock matrices}
\vskip -5mm
\bse\begin{alignat}{2}
\label{bilinear:A}&\mathcal{A}_{\gamma}((\vecu_{\hg}, \lambda_{\hg}),(\vecv, \mu))\eq a_{\gamma}(\vecu_{\hg},\vecv)+
 L_{\gamma}(\vecu_{\hg},\vecv)_{\gamma}+
 \dfrac{1}{\tau }(\lambda_{\hg},\mu)_{\gamma}
 +b_{\gamma}(\vecu_{\hg},\mu)
-b_{\gamma}(\vecv,\lambda_{\hg}),\\
\label{linear:F}&\mathcal{F}^{n,k-1}_{\gamma}(\vecv,
\mu)\eq
(\xi(\vecu^{n,k-1}_{\hg}) +
L_{\gamma}\vecu^{n,k-1}_{\hg},\vecv)_{\gamma}+
(f_{\gamma}^{n}+g^{n}_{\gamma},\mu)_{\gamma}.
\end{alignat}\ese
The first result concerns the properties of the coupling term  $s_{\gamma}$.
\begin{lem}[Properties of the DD operator]\label{lem:bound_on_s}
 The interface bilinear form
 $s_{\gamma}$
 \FIX{\begin{itemize}
  \item
  $s_{\gamma}$ is symmetric positive  and
  semi-definite on $L^{2}(\gamma)$.
  \item There exists a constant $C_{1}>0$ independent of $h$  such that,
for all $\lambda_{\hg}\in M_{\hg}$,
 \begin{equation}\label{def_inter_oper_disc}
    {\left(C_{1}\dfrac{C_{\vK}}{\sqrt{c_{\vK}}}+\dfrac{1}{\sqrt{\alpha_{\gamma}}}\right)^{-2}}||\lambda_{\hg}||_{\gamma}^{2}\leq
    s_{\gamma}(\lambda_{\hg},\lambda_{\hg})\leq  \alpha_{\gamma}||\lambda_{\hg}||_{\gamma}^{2}.
 \end{equation}

 %with $c_{\gamma,s}= \left(C\dfrac{C_{\vK}}{\sqrt{c_{\vK}}}+\dfrac{1}{\sqrt{\alpha_{\gamma}}}\right)^2$ being the inverse of the
%  coercivity constant.
  \end{itemize}}{is symmetric positive  and
    semi-definite on $L^{2}(\gamma)$, and there exists a constant $C_{1}>0$ independent of $h$  such that,
    for all $\lambda_{\hg}\in M_{\hg}$,
    \begin{equation}\label{def_inter_oper_disc}
    {\left(C_{1}\dfrac{C_{\vK}}{\sqrt{c_{\vK}}}+\dfrac{1}{\sqrt{\alpha_{\gamma}}}\right)^{-2}}||\lambda_{\hg}||_{\gamma}^{2}\leq
    s_{\gamma}(\lambda_{\hg},\lambda_{\hg})\leq  \alpha_{\gamma}||\lambda_{\hg}||_{\gamma}^{2}.
    \end{equation}
}
%  For continuous pressure across $\gamma$   ($\dfrac{1}{\alpha}\approx 0$),
%  there exists positive constant $C_{\gamma,s}$  such that for all $\lambda\in M_{\hg}$
%  \begin{equation}\label{def_inter_oper_cont}
%  0\leq s_{\gamma}(\lambda,\lambda)\leq  C_{\gamma,2}\dfrac{C_{\vK,\gamma}}{h}||\lambda||^{2}.
%  \end{equation}\ese
\end{lem}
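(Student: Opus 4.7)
The plan is to derive an energy identity for $s_{\gamma,i}(\lambda_{\hg}, \lambda_{\hg})$ by testing the subdomain problem~\eqref{subdo1_weak_mixed_formulation_disc} with its own solution, then deduce symmetry and positivity from it, and finally obtain the two-sided bound via a Young's inequality (upper bound) and a Fortin-type extension argument (lower bound).

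\textbf{Symmetry and positivity.} For $\lambda_{\hg}, \mu \in M_{\hg}$, test~\eqref{subdo1_weak_mixed_formulation_disc} (with data $\lambda_{\hg}$) against $\vecv = \vecu^{*}_{\hi}(\mu)$ and $q = p^{*}_{\hi}(\mu)/\tau$; then do the same swapping the roles of $\lambda_{\hg}$ and $\mu$. Adding the pressure equations shows that $b_i(\vecu^{*}_{\hi}(\lambda_{\hg}), p^{*}_{\hi}(\mu)) = b_i(\vecu^{*}_{\hi}(\mu), p^{*}_{\hi}(\lambda_{\hg}))$ thanks to the symmetry of $c_i$, from which $(\lambda_{\hg}, \vecu^{*}_{\hi}(\mu) \cdot \vecn_i)_\gamma = (\mu, \vecu^{*}_{\hi}(\lambda_{\hg}) \cdot \vecn_i)_\gamma$, i.e., $s_{\gamma,i}(\lambda_{\hg}, \mu) = s_{\gamma,i}(\mu, \lambda_{\hg})$. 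Taking $\mu = \lambda_{\hg}$ and combining the two equations yields the key identity
\begin{equation*}
s_{\gamma,i}(\lambda_{\hg}, \lambda_{\hg}) \;=\; (\vK^{-1}\vecu^{*}_{\hi}, \vecu^{*}_{\hi})_{\Omega_i} + \alpha_\gamma^{-1}\|\vecu^{*}_{\hi}\cdot\vecn_i\|_\gamma^2 + \tfrac{1}{\tau}\|p^{*}_{\hi}\|_{\Omega_i}^2 \;\geq\; 0,
\end{equation*}
where $\vecu^{*}_{\hi} = \vecu^{*}_{\hi}(\lambda_{\hg})$. Summation over $i$ gives positive semidefiniteness of $s_\gamma$.

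\textbf{Upper bound.} From the definition $s_{\gamma,i}(\lambda_{\hg}, \lambda_{\hg}) = -(\lambda_{\hg}, \vecu^{*}_{\hi}\cdot\vecn_i)_\gamma$ and Cauchy–Schwarz/Young with weight $\alpha_\gamma$,
\begin{equation*}
s_{\gamma,i}(\lambda_{\hg}, \lambda_{\hg}) \;\leq\; \tfrac{\alpha_\gamma}{2}\|\lambda_{\hg}\|_\gamma^2 + \tfrac{1}{2\alpha_\gamma}\|\vecu^{*}_{\hi}\cdot\vecn_i\|_\gamma^2 \;\leq\; \tfrac{\alpha_\gamma}{2}\|\lambda_{\hg}\|_\gamma^2 + \tfrac{1}{2} s_{\gamma,i}(\lambda_{\hg}, \lambda_{\hg}),
\end{equation*}
where the second step uses the identity above. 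Absorbing and summing on $i$ gives $s_\gamma(\lambda_{\hg}, \lambda_{\hg}) \leq \alpha_\gamma\|\lambda_{\hg}\|_\gamma^2$.

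\textbf{Lower bound.} This is the main obstacle. The plan is to construct, for any $\lambda_{\hg}\in M_{\hg}$, a test function $\vecv_{\lambda}\in \mathbf{V}_{\hi}$ with $\nabla\cdot\vecv_\lambda=0$, $\vecv_\lambda\cdot\vecn_i = 0$ on $\partial\Omega_i\setminus\gamma$, and $\mathcal{Q}_{\hi}^{\textn{T}}(\vecv_\lambda\cdot\vecn_i) = \lambda_{\hg}$ on $\gamma$, together with the stability estimate $\|\vecv_\lambda\|_{\Omega_i} \leq C_1 \|\lambda_{\hg}\|_\gamma$. This is a standard mortar Fortin-type construction (e.g., of the sort used to verify~\eqref{solvability}); the constant $C_1$ depends only on the shape regularity and the compatibility of $\Tau_{\hg}$ with $\Tau_{\hi}$. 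Testing~\eqref{subdo1_weak_mixed_formulation_disc} with $\vecv = \vecv_\lambda$ produces
\begin{equation*}
\|\lambda_{\hg}\|_\gamma^2 \;=\; -(\vK^{-1}\vecu^{*}_{\hi}, \vecv_\lambda)_{\Omega_i} - \alpha_\gamma^{-1}(\vecu^{*}_{\hi}\cdot\vecn_i, \vecv_\lambda\cdot\vecn_i)_\gamma,
\end{equation*}
where the pressure term vanishes by $\nabla\cdot\vecv_\lambda = 0$. Applying Cauchy–Schwarz together with the bounds of~\ref{ass_K_omega} and the stability of $\vecv_\lambda$ yields
\begin{equation*}
\|\lambda_{\hg}\|_\gamma \;\leq\; C_1 C_{\vK}\|\vecu^{*}_{\hi}\|_{\Omega_i} + \alpha_\gamma^{-1}\|\vecu^{*}_{\hi}\cdot\vecn_i\|_\gamma.
\end{equation*}
Finally, using $\|\vecu^{*}_{\hi}\|_{\Omega_i}\leq c_{\vK}^{-1/2}\sqrt{(\vK^{-1}\vecu^{*}_{\hi}, \vecu^{*}_{\hi})_{\Omega_i}}$ together with $\alpha_\gamma^{-1}\|\vecu^{*}_{\hi}\cdot\vecn_i\|_\gamma = \alpha_\gamma^{-1/2}\sqrt{\alpha_\gamma^{-1}\|\vecu^{*}_{\hi}\cdot\vecn_i\|_\gamma^2}$ and the energy identity allows to bound both right-hand side contributions by $\sqrt{s_{\gamma,i}(\lambda_{\hg}, \lambda_{\hg})}$, giving
\begin{equation*}
\|\lambda_{\hg}\|_\gamma \;\leq\; \left(C_1\tfrac{C_{\vK}}{\sqrt{c_{\vK}}} + \tfrac{1}{\sqrt{\alpha_\gamma}}\right)\sqrt{s_{\gamma,i}(\lambda_{\hg}, \lambda_{\hg})}\,,
\end{equation*}
and since $s_{\gamma,i}\leq s_\gamma$, squaring delivers the claimed lower bound. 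The primary difficulty is the explicit construction/citation of the Fortin operator with the divergence-free property and the projected trace constraint; I would invoke~\cite{MR3577939, MR2557486} where such operators are built for the MMFEM setting under the mesh compatibility assumption used for~\eqref{solvability}.
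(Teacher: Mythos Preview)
Your proof is correct and follows essentially the same approach as the paper: the energy identity, the Cauchy--Schwarz/absorption argument for the upper bound, and a divergence-free discrete lifting with prescribed normal trace for the lower bound all match. The only difference is that where you invoke an abstract Fortin-type lifting, the paper constructs $\vecv_\lambda$ explicitly by solving an auxiliary continuous Neumann problem $\nabla\cdot(\vK_i\nabla\psi_i)=0$ with $\vecr_i\cdot\vecn_i=\mathcal{Q}_{\hi}\lambda_{\hg}$ on $\gamma$ and then setting $\vecv_\lambda=\tilde{\Pi}_i\vecr_i$; this gives $\vecv_\lambda\cdot\vecn_i=\mathcal{Q}_{\hi}\lambda_{\hg}$ (rather than your condition on $\mathcal{Q}_{\hi}^{\textn{T}}$), which is what is actually needed to bound the boundary term by $\alpha_\gamma^{-1}\|\vecu^{*}_{\hi}\cdot\vecn_i\|_\gamma\|\lambda_{\hg}\|_\gamma$, and the stability $\|\vecv_\lambda\|_{\Omega_i}\lesssim\|\lambda_{\hg}\|_\gamma$ follows from elliptic regularity $\|\vecr_i\|_{1/2,\Omega_i}\lesssim\|\mathcal{Q}_{\hi}\lambda_{\hg}\|_\gamma$ together with~\eqref{estimate_regularity_Pi}.
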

\begin{proof}
 Recalling~\eqref{bilinear_forms_discrete} and
 taking $\vecv=\vecu^{*}_{\hi}(\mu)$ and $ q = p^{*}_{\hi}(\mu) $
 in~\eqref{subdo1_weak_mixed_formulation_disc}\FIX{ to see that the bilinear
 form}{, }
 $s_{\gamma}$ can be expressed as
 \begin{alignat}{2}\label{def:s_g}
    s_{\gamma}(\lambda_{\hg},\mu)=\sum_{i=1}^{2}
    \{a_{i}(\vecu^{*}_{\hi}(\lambda_{\hg}),\vecu^{*}_{\hi}(\mu))+
    c_{i}(p^{*}_{\hi}(\lambda_{\hg}),p^{*}_{\hi}(\mu))\}.
  \end{alignat}
  It is now easy to see that the bilinear form
 $s_{\gamma}$ is  symmetric and positive
 semi-definite on $L^{2}(\gamma)$.   We now show that if
$s_{\gamma}(\lambda_{\hg},\lambda_{\hg})=0$, then $\lambda_{\hg}=0$ on $M_{\hg}$.  Note that
$s_{\gamma}(\lambda_{\hg},\lambda_{\hg})=0$ implies that
$\vecu^{*}_{\hi}(\lambda_{\hg})=p^{*}_{\hi}(\lambda_{\hg})=0$.
Again,~\eqref{subdo1_weak_mixed_formulation_disc} implies $(\mathcal{Q}_{\hi}
\lambda_{\hg},\vecv\cdot\vecn_{i})_{\gamma}=(
\lambda_{\hg},\vecv\cdot\vecn_{i})_{\gamma}=0$ for any $\vecv\in\mathbf{V}_{\hi}$.
Thus, we can find some $\vecv$ so that $\vecv\cdot\vecn_{i}=\mathcal{Q}_{\hi}
\lambda_{\hg}$ and then $||\mathcal{Q}_{\hi} \lambda_{\hg}||_{\gamma}=0$.
Finally,~\eqref{solvability} shows that $\lambda_{\hg}=0$ on $\gamma$.
%and from that
%~\eqref{solvability} that $\lambda_{\hg}=0$ on $\gamma$.
% $\vecu^{*}_{\hi}(\lambda_{\hg})\in\textn{ker}(a_{i})$
% and
% $p^{*}_{\hi}(\lambda_{\hg})\in\textn{ker}(c_{i})$ then that
% $a_{i}(\vecu^{*}_{\hi}(\lambda_{\hg}),\vecv)=0$, $\forall \vecv\in \mathbf{V}_{\hi}$
% and $c_{i}(p^{*}_{\hi}(\lambda_{\hg}),q)=0$, $\forall q\in M_{\hi}$. Then, let
% in~\eqref{subdo1_weak_mixed_formulation_disc}, to obtain
% \begin{alignat}{2}
%  &0=-b_{i}(\tilde{\Pi}_{i} \vecr_{i},p^{*}_{\hi}(\lambda_{\hg}))+(
%  \lambda_{\hg}, \tilde{\Pi}_{i} \vecr_{i}\cdot\vecn_{i})_{\gamma}=(\mathcal{Q}_{\hi}
%  \lambda_{\hg}, \tilde{\Pi}_{i} \vecr_{i}\cdot\vecn_{i})_{\gamma}=||\mathcal{Q}_{\hi}\lambda_{\hg}||_{\gamma}^{2},\quad&&\forall\vecv\in \mathbf{V}_{\hi},
% \end{alignat}
% which implies using~\eqref{solvability} that $\lambda_{\hg}=0$ on $\gamma$.
We now infer the upper bound on
 $s_{\gamma}$. The assumption~\ref{ass_K_omega} directly implies
  \begin{equation}\label{coer:a_i}
   c_{\vK}||\vecu_{\hi}||_{\Omegai}^{2}+{\alpha_{\gamma}^{-1}}||\vecu_{\hi}\cdot\vecn_{i}||_{\gamma}^{2}\leq a_{i}(\vecu_{\hi},\vecu_{\hi}),\quad\forall \vecu_{\hi} \in \mathbf{V}_{h,i}.
  \end{equation}
 The definition~\eqref{bilinear_forms_discrete}
 of $s_{\gamma}$ gives
\FIX{\begin{alignat}{2}
    \nonumber& s_{\gamma}(\lambda_{\hg},\lambda_{\hg})&&=-\sum_{i=1}^{2}(
    \lambda_{\hg}, \vecu^{*}_{\hi}(\lambda_{\hg})\cdot\vecn_{i})_{\gamma}\leq \sum_{i=1}^{2}||\vecu^{*}_{\hi}(\lambda_{\hg})\cdot\vecn_{i}||_{\gamma}||\lambda_{\hg}||_{\gamma}\\
    &&& \leq \sum_{i=1}^{2}\alpha_{\gamma}^{1/2}a_{i}(\vecu^{*}_{\hi}(\lambda_{\hg}),\vecu^{*}_{\hi}(\lambda_{\hg}))^{1/2}||\lambda_{\hg}||_{\gamma}.
    \end{alignat}}{
 \begin{equation}
    s_{\gamma}(\lambda_{\hg},\lambda_{\hg})
    %=-\sum_{i=1}^{2}(\lambda_{\hg}, \vecu^{*}_{\hi}(\lambda_{\hg})\cdot\vecn_{i})_{\gamma}
    \leq \sum_{i=1}^{2}||\vecu^{*}_{\hi}(\lambda_{\hg})\cdot\vecn_{i}||_{\gamma}||\lambda_{\hg}||_{\gamma}\leq \sum_{i=1}^{2}\alpha_{\gamma}^{1/2}a_{i}(\vecu^{*}_{\hi}(\lambda_{\hg}),\vecu^{*}_{\hi}(\lambda_{\hg}))^{1/2}||\lambda_{\hg}||_{\gamma}.
 \end{equation}}This result together with~\eqref{def:s_g} leads
 to the upper bound in~\eqref{def_inter_oper_disc}.
 We prove the lower bound by induction.\FIX{. To this aim}{} We
 let $(\psi_{i},\vecr_{i})$, $i\in\{1,2\}$,  be the solution of the
 auxiliary subdomain problem
\begin{gather*}%\label{aux_prob}
\begin{aligned}
& \vecr_{i}+\vK_{i}\nabla \psi_{i}=\bm{0}\quad\textn{in }\Omegai,
&&\FIX{}{\psi_{i}=0\quad\textn{on }\Gamma_{i},}\\
&\nabla\cdot \vecr_{i}=0\quad\textn{in }\Omegai,
\quad&&\FIX{}{\vecr_{i}\cdot\vecn_{i}=\mathcal{Q}_{\hi}\lambda_{\hg}\quad\textn{on
}\gamma.}
\FIX{,\\&\psi_{i}=0,\quad&&\textn{on }\Gamma_{i},\\
&\vecr_{i}\cdot\vecn_{i}=\mathcal{Q}_{\hi}\lambda_{\hg}\quad&&\textn{on
}\gamma.}{}
\end{aligned}
\end{gather*}
{For fracture network with immersed fractures or for subdomains with $\Gamma_{i}=\emptyset$,  $\lambda_{\hg}$ approximates the pressure on $\gamma$\FIX{, which is }{} determined up to a constant. This constant is fixed by a zero  mean value constraint for $M_{\hg}$~\cite{Frih2012,MR2306414}. Thus, the auxiliary problem is well-posed since $(\vecr_{i}\cdot\vecn_{i},1)_{\partial\Omega_{i}}=(\mathcal{Q}_{\hi}\lambda_{\hg},1)_{\partial\Omega_{i}}=0$.} Now, we choose $\vecv=\tilde{\Pi}_{i} \vecr_{i}$ in~\eqref{subdo1_weak_mixed_formulation_disc},
 to obtain
 \begin{gather}
\nonumber||\mathcal{Q}_{\hi}\lambda_{\hg}||_{\gamma}^{2}=(\lambda_{\hg}, \tilde{\Pi}_{i} \vecr_{i}\cdot\vecn_{i})_{\gamma}
=-a_{i}(\vecu^{*}_{\hi}(\lambda_{\hg}),\tilde{\Pi}_{i} \vecr_{i})
+b_{i}(\tilde{\Pi}_{i} \vecr_{i},p^{*}_{\hi}(\lambda_{\hg}))
=-a_{i}(\vecu^{*}_{\hi}(\lambda_{\hg}),\tilde{\Pi}_{i} \vecr_{i})\\\nonumber
\leq C C_{\vK}||\vecu^{*}_{\hi}(\lambda_{\hg})||_{\Omegai}|| \vecr_{i}||_{1/2,\Omegai}
+\alpha_{\gamma}^{-1}|| \vecu^{*}_{\hi}(\lambda_{\hg})\cdot\vecn_{i}||_{\gamma}||\mathcal{Q}_{\hi} \lambda_{\hg}||_{\gamma}
\leq C C_{\vK}||\vecu^{*}_{\hi}(\lambda_{\hg})||_{\Omegai}||
\mathcal{Q}_{\hi}\lambda_{\hg}||_{\gamma}+\\\label{bound_low_s}
\alpha_{\gamma}^{-1}|| \vecu^{*}_{\hi}(\lambda_{\hg})\cdot\vecn_{i}||_{\gamma}
|| \mathcal{Q}_{\hi}\lambda_{\hg}||_{\gamma}
\leq \left(C\dfrac{C_{\vK}}{\sqrt{c_{\vK}}}+\dfrac{1}{\sqrt{\alpha_{\gamma}}}\right)\sqrt{a_{i}(\vecu^{*}_{\hi}(\lambda_{\hg}),\vecu^{*}_{\hi}(\lambda_{\hg}))}|| \mathcal{Q}_{\hi}\lambda_{\hg}||_{\gamma},
%&(p^{*}_{i}(\lambda_{\gamma}(\cdot,0)),\mu)_{\Omegai}=0, \quad&&\forall \mu\in  M_{i},
 \end{gather}
where we used~\eqref{coer:a_i}, assumption~\ref{ass_K_omega} and the
elliptic regularity~\eqref{estimate_regularity_Pi} \FIX{together with}{and $|| \vecr_{i}||_{1/2,\Omegai}\lesssim || \mathcal{Q}_{\hi}\lambda_{\hg}||_{\gamma}$.}
% \begin{equation}
%
% \end{equation}
The  bound~\eqref{bound_low_s} in combination
with~\eqref{def:s_g}-\eqref{coer:a_i}~and~\eqref{solvability} delivers the lower bound in~\eqref{def_inter_oper_disc}.
% \begin{alignat}{2}
% ||\lambda||_{\gamma}^{2}\leq \left(C\dfrac{C_{\vK}}{\sqrt{c_{\vK}}}+\dfrac{1}{\sqrt{\alpha_{\gamma}}}\right)^2
%  s_{\gamma}(\lambda,\lambda).
% \end{alignat}
\end{proof}
\FIX{ As announced in the introduction, i}{I}t is interesting to study the robustness of Algorithm~\ref{monolithic_ldd} and Algorithm~\ref{splitting_ldd} for the
limiting case\FIX{in which the coefficient }{} $\alpha_{\gamma}\rightarrow\infty$ in the
 transmission conditions~\eqref{Initial_system_r_interface}\FIX{. This case is physically}{, which is} corresponding to a continuous pressure over the fracture interface.
% across the fracture $\gamma$,  the counterpart of
% estimate~\eqref{def_inter_oper_disc}  is stated in
% the next lemma.

\begin{lem}[Parameter-robustness
($\alpha_{\gamma}\rightarrow\infty$)]In the case of continuous pressure across $\gamma$,
there exists  a constant $C_{2} > 0$
such that, for all $\lambda_{\hg}\in M_{\hg}$,
 \begin{equation}\label{def_inter_oper_cont}
 C_{2}{c_{\vK}}{C_{\vK}^{-2}}||\lambda_{\hg}||_{\gamma}^{2}\leq
 s_{\gamma}(\lambda_{\hg},\lambda_{\hg})\leq {C_{\textn{dTr}}^{2}}{ c_{\vK}^{-1}}h^{-1}||\lambda_{\hg}||_{\gamma}^{2}.
 \end{equation}
\end{lem}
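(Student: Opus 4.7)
The plan is to mimic the proof of Lemma on $s_\gamma$ in the Robin case, but replacing the Robin-to-Neumann boundary contribution (which disappears in the limit $\alpha_\gamma \to \infty$) by the discrete trace inequality~\eqref{dtrace_ineq} for the upper bound, and by a purely Dirichlet auxiliary problem for the lower bound. Throughout we use the representation
\begin{equation*}
s_{\gamma}(\lambda_{\hg},\lambda_{\hg})=\sum_{i=1}^{2}\bigl\{a_{i}(\vecu^{*}_{\hi}(\lambda_{\hg}),\vecu^{*}_{\hi}(\lambda_{\hg}))+c_{i}(p^{*}_{\hi}(\lambda_{\hg}),p^{*}_{\hi}(\lambda_{\hg}))\bigr\},
\end{equation*}
which was already established, and keep in mind that now $a_i(\vecu,\vecv)=(\vK^{-1}\vecu,\vecv)_{\Omega_i}$, so~\ref{ass_K_omega} gives $c_\vK\|\vecu_{\hi}\|^2_{\Omega_i}\leq a_i(\vecu_{\hi},\vecu_{\hi})$.

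\textbf{Upper bound.} Starting from $s_{\gamma}(\lambda_{\hg},\lambda_{\hg})=-\sum_{i}(\lambda_{\hg},\vecu^{*}_{\hi}(\lambda_{\hg})\cdot\vecn_{i})_{\gamma}$, I apply Cauchy--Schwarz, then the discrete trace inequality~\eqref{dtrace_ineq} on each $\vecu^{*}_{\hi}(\lambda_{\hg})\in\mathbf{V}_{\hi}$, to obtain
\begin{equation*}
s_{\gamma}(\lambda_{\hg},\lambda_{\hg})\leq\sum_{i=1}^{2}C_{\textn{dTr}}h^{-1/2}\|\vecu^{*}_{\hi}(\lambda_{\hg})\|_{\Omega_i}\,\|\lambda_{\hg}\|_\gamma.
\end{equation*}
Bounding $\|\vecu^{*}_{\hi}\|_{\Omega_i}^2\leq c_\vK^{-1}a_i(\vecu^{*}_{\hi},\vecu^{*}_{\hi})\leq c_\vK^{-1}s_{\gamma}(\lambda_{\hg},\lambda_{\hg})$ and using Cauchy--Schwarz across $i$ then yields $\sqrt{s_{\gamma}(\lambda_{\hg},\lambda_{\hg})}\lesssim C_{\textn{dTr}}h^{-1/2}c_\vK^{-1/2}\|\lambda_{\hg}\|_\gamma$, which is the claimed upper bound after squaring.

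\textbf{Lower bound.} For each $i$, I introduce the auxiliary Dirichlet problem: find $(\psi_i,\vecr_i)$ with $\vecr_i+\vK_i\nabla\psi_i=\bm{0}$ and $\nabla\cdot\vecr_i=0$ in $\Omega_i$, $\psi_i=0$ on $\Gamma_i$, and $\vecr_i\cdot\vecn_i=\mathcal{Q}_{\hi}\lambda_{\hg}$ on $\gamma$ (well-posed by standard elliptic theory, with the compatibility adjustment in the immersed/pure Neumann case as in the Robin proof). Choosing $\vecv=\tilde{\Pi}_i\vecr_i$ in~\eqref{subdo1_weak_mixed_formulation_disc} (with the continuous-pressure form $a_i(\vecu,\vecv)=(\vK^{-1}\vecu,\vecv)_{\Omega_i}$), I get
\begin{equation*}
\|\mathcal{Q}_{\hi}\lambda_{\hg}\|_\gamma^2=(\lambda_{\hg},\tilde{\Pi}_i\vecr_i\cdot\vecn_i)_\gamma=-a_i(\vecu^{*}_{\hi}(\lambda_{\hg}),\tilde{\Pi}_i\vecr_i)+b_i(\tilde{\Pi}_i\vecr_i,p^{*}_{\hi}(\lambda_{\hg})),
\end{equation*}
and the $b_i$ term vanishes because $\nabla\cdot\vecr_i=0$ combined with~\eqref{properties_Pi} forces $\nabla\cdot\tilde{\Pi}_i\vecr_i=0$ in $M_{\hi}$. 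Then~\ref{ass_K_omega}, the stability bound~\eqref{estimate_regularity_Pi} for $\tilde{\Pi}_i$, and the elliptic regularity $\|\vecr_i\|_{\epsilon,\Omega_i}\lesssim\|\mathcal{Q}_{\hi}\lambda_{\hg}\|_\gamma$ give
\begin{equation*}
\|\mathcal{Q}_{\hi}\lambda_{\hg}\|_\gamma\lesssim C_\vK c_\vK^{-1/2}\sqrt{a_i(\vecu^{*}_{\hi}(\lambda_{\hg}),\vecu^{*}_{\hi}(\lambda_{\hg}))}.
\end{equation*}
Summing in $i$, invoking~\eqref{solvability}, and squaring produces $\|\lambda_{\hg}\|_\gamma^2\lesssim C_\vK^2 c_\vK^{-1}s_{\gamma}(\lambda_{\hg},\lambda_{\hg})$, which rearranges to the desired lower bound with some constant $C_2>0$.

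\textbf{Main obstacle.} The Robin boundary data $\vecr_i\cdot\vecn_i=\mathcal{Q}_{\hi}\lambda_{\hg}$ no longer appears as a volumetric Robin penalty in $a_i$, so the key difficulty is to \emph{keep} the $b_i$-term in the lower-bound identity under control without any help from $\alpha_\gamma^{-1}$. This is precisely resolved by exploiting the commuting-diagram property of $\tilde{\Pi}_i$ together with $\nabla\cdot\vecr_i=0$, which kills this term cleanly; everything else is a repackaging of the Robin-case arguments with the discrete trace inequality~\eqref{dtrace_ineq} substituted for the $\alpha_\gamma^{-1}$-weighted trace estimate.
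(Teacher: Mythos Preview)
Your proof is correct and follows essentially the same approach as the paper. The paper's own proof is terser: for the upper bound it uses the same Cauchy--Schwarz plus discrete trace inequality~\eqref{dtrace_ineq} chain you give, and for the lower bound it simply refers back to the Robin-case computation~\eqref{bound_low_s} (same auxiliary Neumann problem, same test function $\tilde{\Pi}_i\vecr_i$, $b_i$-term killed by~\eqref{properties_Pi}), dropping the now-absent $\alpha_\gamma^{-1}$ boundary term --- exactly what you spell out in detail.
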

\begin{proof}
 Recalling~the definition~\eqref{bilinear_forms_discrete}
 of $s_{\gamma}$ and using \eqref{dtrace_ineq}, we have
%  \begin{equation}
%   s_{\gamma}(\lambda,\lambda):=-\sum_{i=1}^{2}(
%  \lambda, \vecu_{\hi}(\lambda)\cdot\vecn_{i})_{\gamma}\leq \sum_{i=1}^{2}||\vecu_{\hi}(\lambda)\cdot\vecn_{i}||_{\gamma}||\lambda||_{\gamma}\leq \alpha^{1/2}a_{i}(\vecu_{\hi}(\lambda),\vecu_{\hi}(\lambda))^{1/2}||\lambda||_{\gamma}.
%  \end{equation}
% Thus,
\FIX{\begin{alignat*}{2}
 &0\leq s_{\gamma}(\lambda_{\hg},\lambda_{\hg})&&=-\sum_{i=1}^{2}(
 \lambda_{\hg}, \vecu^{*}_{\hi}(\lambda_{\hg})\cdot\vecn_{i})_{\gamma}
\leq \sum_{i=1}^{2}||\vecu^{*}_{\hi}(\lambda_{\hg})\cdot\vecn_{i}||_{\gamma}||\lambda_{\hg}||_{\gamma},\\
&&&\leq \sum_{i=1}^{2}C_{\textn{dTr}}h^{-1/2}||\vecu_{\hi}(\lambda_{\hg})||_{\Omegai}||\lambda_{\hg}||_{\gamma},
\end{alignat*}}{
\begin{equation}
    0\leq s_{\gamma}(\lambda_{\hg},\lambda_{\hg})
    \leq \sum_{i=1}^{2}||\vecu^{*}_{\hi}(\lambda_{\hg})\cdot\vecn_{i}||_{\gamma}||\lambda_{\hg}||_{\gamma} \leq \sum_{i=1}^{2}C_{\textn{dTr}}h^{-1/2}||\vecu_{\hi}(\lambda_{\hg})||_{\Omegai}||\lambda_{\hg}||_{\gamma}.
\end{equation}}\FIX{where in that case we used the discrete trace inequality}{}This result together with~\eqref{def:s_g} and \eqref{coer:a_i}
leads to the upper bound
in~\eqref{def_inter_oper_cont}. By inspection of the
proof of~Lemma~\ref{lem:bound_on_s}, starting as in~\eqref{bound_low_s} we
promptly get the
lower bound of~\eqref{def_inter_oper_cont}.
\end{proof}
 In the following, we denote by $||\cdot||_{s,\gamma}$ the  induced  semi-norm from $s_{\gamma}$ on
$L^{2}(\gamma)$,
\begin{equation}\label{seminorm}
 ||\mu||_{s,\gamma}:=s_{\gamma}(\mu,\mu)^{1/2},\quad\forall\mu\in L^{2}(\gamma).
\end{equation}
% We will frequently use the following continuity estimate
% \begin{cor}The bilinear
%   $s_{\gamma}(\cdot,\cdot)$ satisfies,  for $\alpha_{\gamma}>0$, the following continuity conditions,
%    \begin{equation}\label{def_inter_cont_cont}
%  s_{\gamma}(\lambda_{\hg},\mu_{\hg})\leq  ||\lambda_{\hg}||_{s,\gamma} ||\mu_{\hg}||_{s,\gamma} \leq \alpha_{\gamma}||\lambda_{\hg}||_{\gamma} ||\mu_{\hg}||_{\gamma},
%  \end{equation}
%  and if $\alpha_{\gamma}\rightarrow\infty$,
%     \begin{equation}\label{def_inter_cont_disc}
%  s_{\gamma}(\lambda_{\hg},\mu_{\hg})\leq  ||\lambda_{\hg}||_{s,\gamma} ||\mu_{\hg}||_{s,\gamma} \leq \dfrac{C_{\textn{dTr}}^{2}}{ c_{\vK}}h^{-1}||\lambda_{\hg}||_{\gamma} ||\mu_{\hg}||_{\gamma},
%  \end{equation}
% \end{cor}
We will also consider the following discrete norms:
\bse\label{discrete_norms}\begin{alignat}{2}
 &||(\vecv_{\hg}, \mu_{\hg})||^{2}_{0,\tau,\star}\eq ||\vK_{\gamma}^{-\frac{1}{2}}\vecv_{\hg}||^{2}_{\gamma}
+||L_{\gamma}^{\frac{1}{2}}\vecv_{\hg}||^{2}_{\gamma}+||\tau^{-\frac{1}{2}}\mu_{\hg}||^{2}_{\gamma},\\
 &||\vecv_{\hg}||^{2}_{\mathbf{V}_{\hg}}\eq||\vK^{-\frac{1}{2}}_{\gamma}\vecv_{\hg}||^{2}_{\gamma}
+||L_{\gamma}^{\frac{1}{2}}\vecv_{\hg}||^{2}_{\gamma}+
||\tau^{\frac{1}{2}}\nabla_{\tau}\cdot\vecv_{\hg}||^{2}_{\gamma},\\
&||\mu_{\hg}||^{2}_{M_{\hg}}\eq||\mu_{\hg}||^{2}_{s,\gamma}+
||\tau^{-\frac{1}{2}}\mu_{\hg}||^{2}_{\gamma},\\
&||(\vecv_{\hg}, \mu_{\hg})||^{2}_{1,\tau,\star}\eq||\vecv_{\hg}||^{2}_{\mathbf{V}_{\hg}}+||\mu_{\hg}||^{2}_{M_{\hg}}.
\end{alignat}\ese
\FIX{We  start with the estimate below.}{The following estimates are obtained.}
\begin{lem} [Inverse energy estimates]
 There  holds  for  all
$(\vecu_{\hg}, \lambda_{\hg})\in  \mathbf{V}_{\hg}\times M_{\hg}$,
\begin{equation}
\label{estim_1_0_alpha} ||(\vecu_{\hg}, \lambda_{\hg})||_{1,\tau,\star}
 \leq\sqrt{\max((1+C_{\textn{inv}} c_{\vK,\gamma}\tau h^{-2}),
 (1+\alpha_{\gamma}\tau))}||(\vecu_{\hg}, \lambda_{\hg})||_{0,\tau,\star}.
\end{equation}
Furthermore, if $\alpha_{\gamma}\rightarrow\infty$, there holds
\begin{equation}
 \label{estim_1_0_noalpha} ||(\vecu_{\hg}, \lambda_{\hg})||_{1,\tau,\star}
 \leq\sqrt{\max((1+C_{\textn{inv}} c_{\vK,\gamma}\tau h^{-2}),(1+C_{\textn{dTr}}^2c_{\vK}^{-1} \tau h^{-1}))}
 ||(\vecu_{\hg}, \lambda_{\hg})||_{0,\tau,\star}.
\end{equation}
\end{lem}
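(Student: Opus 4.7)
The plan is elementary. Expanding the two norms via \eqref{discrete_norms}, the only contributions to $||(\vecu_{\hg},\lambda_{\hg})||^{2}_{1,\tau,\star}$ that are not already present in $||(\vecu_{\hg},\lambda_{\hg})||^{2}_{0,\tau,\star}$ are $\tau||\nabla_{\tau}\cdot\vecu_{\hg}||^{2}_{\gamma}$ and $||\lambda_{\hg}||^{2}_{s,\gamma}$. Accordingly, I would bound each of these by a multiplicative factor times $||\vK_{\gamma}^{-1/2}\vecu_{\hg}||^{2}_{\gamma}$ and $||\tau^{-1/2}\lambda_{\hg}||^{2}_{\gamma}$ respectively, and then collect the resulting terms back into the $0,\tau,\star$-norm.

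For the divergence piece, I would chain the tangential inverse inequality \eqref{inv_ineq} with the lower bound on $\vK_{\gamma}^{-1}$ coming from assumption \textbf{(A3)}, i.e.\ $c_{\vK,\gamma}||\vecu_{\hg}||^{2}_{\gamma}\le ||\vK_{\gamma}^{-1/2}\vecu_{\hg}||^{2}_{\gamma}$, producing a factor of order $C_{\textn{inv}} c_{\vK,\gamma}\tau h^{-2}$ (up to the paper's convention absorbing the square and the inverse into the symbols $C_{\textn{inv}}$ and $c_{\vK,\gamma}$). For the $s_{\gamma}$-seminorm I would use the upper bound proved in Lemma~\ref{lem:bound_on_s}, namely $s_{\gamma}(\lambda_{\hg},\lambda_{\hg})\le\alpha_{\gamma}||\lambda_{\hg}||^{2}_{\gamma}$, which after rewriting $||\lambda_{\hg}||^{2}_{\gamma} = \tau||\tau^{-1/2}\lambda_{\hg}||^{2}_{\gamma}$ yields the factor $\alpha_{\gamma}\tau$. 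Adding $1$ to each factor to account for the common terms of the two norms, extracting the maximum of $(1+C_{\textn{inv}} c_{\vK,\gamma}\tau h^{-2})$ and $(1+\alpha_{\gamma}\tau)$ (both exceed $1$, so the unexpanded $||L_{\gamma}^{1/2}\vecu_{\hg}||^{2}_{\gamma}$ summand is automatically controlled), and taking square roots proves \eqref{estim_1_0_alpha}.

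For the parameter-robust estimate \eqref{estim_1_0_noalpha}, the argument is identical except that the upper bound on $s_{\gamma}$ is replaced by the alternative one in \eqref{def_inter_oper_cont}, which is itself obtained from the discrete trace inequality \eqref{dtrace_ineq}. This substitutes the factor $\alpha_{\gamma}\tau$ with $C_{\textn{dTr}}^{2} c_{\vK}^{-1}\tau h^{-1}$, while the divergence bound is unaffected. Since the two non-trivial ingredients, namely the inverse inequalities and Lemma~\ref{lem:bound_on_s}, are already in hand from earlier in the section, I foresee no substantive obstacle: the proof is essentially a matter of careful constant-tracking so that the scaled divergence term and the $s_{\gamma}$-seminorm collapse into exactly the maxima appearing in the statement of the lemma.
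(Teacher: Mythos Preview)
Your proposal is correct and follows precisely the same approach as the paper: the paper's proof is a single sentence invoking the inverse inequality~\eqref{inv_ineq} together with the upper bound in~\eqref{def_inter_oper_disc} (respectively~\eqref{def_inter_oper_cont} for the limiting case), which is exactly the chain of estimates you have spelled out in detail. Your remark about the constants being absorbed into $C_{\textn{inv}}$ and $c_{\vK,\gamma}$ is consistent with the paper's loose tracking of these quantities.
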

\begin{proof}
 \FIX{Owing to the inverse inequality~}{} With \eqref{inv_ineq}\FIX{, together}{} and~\eqref{def_inter_oper_disc},
 we obtain~\eqref{estim_1_0_alpha}\FIX{, and i}{. I}f $\alpha_{\gamma}\rightarrow\infty$,
 we make use~\eqref{def_inter_oper_cont} to get~\eqref{estim_1_0_noalpha}.
\end{proof}
The following results  are immediately verified.
\begin{lem}[Boundedness on ${A}_{\gamma}$]There holds   for  all
$(\vecu_{\hg}, \lambda_{\hg})$, $(\vecv_{\hg}, \mu_{\hg})\in  \mathbf{V}_{\hg}\times M_{\hg}$,
\begin{equation}
 \mathcal{A}_{\gamma}((\vecu_{\hg}, \lambda_{\hg}),(\vecv_{\hg}, \mu_{\hg}))\leq
 ||(\vecu_{\hg}, \lambda_{\hg})||_{1,\tau,\star}||(\vecv_{\hg}, \mu_{\hg})||_{1,\tau,\star}.
\end{equation}
\end{lem}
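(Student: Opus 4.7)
The plan is to bound each of the five bilinear terms making up $\mathcal{A}_{\gamma}$ separately by a product of two factors, each of which is one of the five squared ingredients of $\|\cdot\|_{1,\tau,\star}^{2}$, and then collect everything with a discrete Cauchy--Schwarz inequality in $\mathbb{R}^{5}$.

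Concretely, I rewrite each term by inserting the natural weights that match the norm:
\begin{itemize}
\item For $a_{\gamma}(\vecu_{\hg},\vecv_{\hg})=(\vK_{\gamma}^{-1}\vecu_{\hg},\vecv_{\hg})_{\gamma}$, I factor $\vK_{\gamma}^{-1}$ as $\vK_{\gamma}^{-1/2}\cdot\vK_{\gamma}^{-1/2}$ and use Cauchy--Schwarz to get $\|\vK_{\gamma}^{-1/2}\vecu_{\hg}\|_{\gamma}\|\vK_{\gamma}^{-1/2}\vecv_{\hg}\|_{\gamma}$.
\item For $L_{\gamma}(\vecu_{\hg},\vecv_{\hg})_{\gamma}$, splitting $L_{\gamma}=L_{\gamma}^{1/2}\cdot L_{\gamma}^{1/2}$ gives the bound $\|L_{\gamma}^{1/2}\vecu_{\hg}\|_{\gamma}\|L_{\gamma}^{1/2}\vecv_{\hg}\|_{\gamma}$.
\item For $\tau^{-1}(\lambda_{\hg},\mu_{\hg})_{\gamma}$, I split $\tau^{-1}$ and obtain $\|\tau^{-1/2}\lambda_{\hg}\|_{\gamma}\|\tau^{-1/2}\mu_{\hg}\|_{\gamma}$.
\item For the two $b_{\gamma}$-terms I write $(\nabla_{\tau}\!\cdot\!\vecu_{\hg},\mu_{\hg})_{\gamma}=(\tau^{1/2}\nabla_{\tau}\!\cdot\!\vecu_{\hg},\tau^{-1/2}\mu_{\hg})_{\gamma}$ and similarly for the other, yielding $\|\tau^{1/2}\nabla_{\tau}\!\cdot\!\vecu_{\hg}\|_{\gamma}\|\tau^{-1/2}\mu_{\hg}\|_{\gamma}$ and $\|\tau^{1/2}\nabla_{\tau}\!\cdot\!\vecv_{\hg}\|_{\gamma}\|\tau^{-1/2}\lambda_{\hg}\|_{\gamma}$.
\end{itemize}

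Summing the five estimates gives an inner product in $\mathbb{R}^{5}$ of a vector whose Euclidean norm is $\|(\vecu_{\hg},\lambda_{\hg})\|_{1,\tau,\star}$ with one whose Euclidean norm is bounded by $\|(\vecv_{\hg},\mu_{\hg})\|_{1,\tau,\star}$ (using also $\|\mu_{\hg}\|_{s,\gamma}\ge 0$, so dropping that ingredient does not break the bound). Applying Cauchy--Schwarz in $\mathbb{R}^{5}$ then yields the stated inequality with constant $1$.

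This is essentially a bookkeeping exercise: the main and only point to be careful about is that the $b_{\gamma}$-terms must be balanced using the $\tau^{1/2}\nabla_{\tau}\cdot$ part of $\|\vecu_{\hg}\|_{\mathbf{V}_{\hg}}$ against the $\tau^{-1/2}$ part of $\|\mu_{\hg}\|_{M_{\hg}}$, which is precisely why these scalings appear in the definition~\eqref{discrete_norms}. The $\|\mu_{\hg}\|_{s,\gamma}$-contribution in $\|\cdot\|_{1,\tau,\star}$ plays no role in the upper bound and is simply discarded; it is included in the norm to make it compatible with the coercivity estimate that will be proved next.
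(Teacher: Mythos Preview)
Your overall approach is the natural one and is precisely what the paper has in mind (the paper gives no proof, stating only that the result is ``immediately verified''). However, there is a bookkeeping slip in the final step that prevents you from reaching the constant~$1$ as written.

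When you list the five products $a_ib_i$ coming from the five terms of $\mathcal{A}_\gamma$, the factor $\|\tau^{-1/2}\lambda_{\hg}\|_\gamma$ occurs \emph{twice} on the $(\vecu_{\hg},\lambda_{\hg})$-side: once from $\tau^{-1}(\lambda_{\hg},\mu_{\hg})_\gamma$ and once from $-b_\gamma(\vecv_{\hg},\lambda_{\hg})$. Hence the Euclidean norm of your $a$-vector is
\[
\Bigl(\|\vK_\gamma^{-1/2}\vecu_{\hg}\|_\gamma^2+\|L_\gamma^{1/2}\vecu_{\hg}\|_\gamma^2+\|\tau^{1/2}\nabla_\tau\!\cdot\vecu_{\hg}\|_\gamma^2+2\|\tau^{-1/2}\lambda_{\hg}\|_\gamma^2\Bigr)^{1/2},
\]
which is \emph{not} $\|(\vecu_{\hg},\lambda_{\hg})\|_{1,\tau,\star}$: the term $\|\tau^{-1/2}\lambda_{\hg}\|_\gamma^2$ is doubled while $\|\lambda_{\hg}\|_{s,\gamma}^2$ is absent, and there is no general inequality $\|\tau^{-1/2}\lambda_{\hg}\|_\gamma^2\le\|\lambda_{\hg}\|_{s,\gamma}^2$ that would let you absorb the extra copy. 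The same doubling happens for $\|\tau^{-1/2}\mu_{\hg}\|_\gamma$ on the $b$-side. Your sentence ``dropping that ingredient does not break the bound'' goes the wrong way here: dropping $\|\cdot\|_{s,\gamma}$ from the target norm makes it \emph{smaller}, which hurts rather than helps an upper bound.

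The easy fix is to accept a harmless constant: your argument yields $\mathcal{A}_\gamma\le\sqrt{2}\,\|(\vecu_{\hg},\lambda_{\hg})\|_{1,\tau,\star}\|(\vecv_{\hg},\mu_{\hg})\|_{1,\tau,\star}$ (bound each doubled square by twice the single square and use that the $s$-term is nonnegative). Since every downstream use of this lemma in the paper (the condition number estimate in Theorem~\ref{thm:cond_num}) already carries a $\lesssim$, the extra factor is immaterial.
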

\begin{lem}[Positivity on ${A}_{\gamma}$]
 There  holds for all
 $(\vecu_{\hg}, \lambda_{\hg})\in  \mathbf{V}_{\hg}\times M_{\hg}$,
 \begin{equation}\label{positivity_A}
 \mathcal{A}_{\gamma}((\vecu_{\hg}, \lambda_{\hg}),(\vecu_{\hg}, \lambda_{\hg}))
 = ||\vK^{-\frac{1}{2}}_{\gamma}\vecu_{\hg}||^{2}_{\gamma}
+||L_{\gamma}^{\frac{1}{2}}\vecu_{\hg}||^{2}_{\gamma}+
  ||\tau^{-\frac{1}{2}}\lambda_{\hg}||_{\gamma}^{2}.
\end{equation}
\end{lem}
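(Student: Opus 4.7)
The plan is to set $(\vecv_{\hg},\mu_{\hg})=(\vecu_{\hg},\lambda_{\hg})$ in the definition \eqref{bilinear:A} of $\mathcal{A}_{\gamma}$ and exploit the skew-symmetric saddle-point structure of the $b_\gamma$ coupling. Specifically, the diagonal test gives
\[
\mathcal{A}_{\gamma}((\vecu_{\hg},\lambda_{\hg}),(\vecu_{\hg},\lambda_{\hg})) = a_{\gamma}(\vecu_{\hg},\vecu_{\hg}) + L_{\gamma}(\vecu_{\hg},\vecu_{\hg})_{\gamma} + \tfrac{1}{\tau}(\lambda_{\hg},\lambda_{\hg})_{\gamma} + b_{\gamma}(\vecu_{\hg},\lambda_{\hg}) - b_{\gamma}(\vecu_{\hg},\lambda_{\hg}),
\]
so the last two off-diagonal terms cancel identically.

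Next, I would rewrite each of the remaining diagonal contributions as a weighted $L^2$-norm squared. The form $a_\gamma$ was defined by $a_{\gamma}(\vecu,\vecv)=(\vK^{-1}_{\gamma}\vecu,\vecv)_{\gamma}$, hence $a_{\gamma}(\vecu_{\hg},\vecu_{\hg}) = \|\vK_{\gamma}^{-1/2}\vecu_{\hg}\|_{\gamma}^{2}$. The $L_\gamma$-term and the $\tau^{-1}$-term trivially become $\|L_{\gamma}^{1/2}\vecu_{\hg}\|_{\gamma}^{2}$ and $\|\tau^{-1/2}\lambda_{\hg}\|_{\gamma}^{2}$, respectively. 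Collecting these three identities yields \eqref{positivity_A}.

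There is no genuine obstacle here: no inverse inequality, no trace bound, and no properties of $s_\gamma$ are needed. The identity is purely algebraic and follows from (i) the symmetry built into the $a_\gamma$, $L_\gamma(\cdot,\cdot)_\gamma$ and $\tau^{-1}(\cdot,\cdot)_\gamma$ blocks, and (ii) the opposite signs with which $b_\gamma$ enters the two components of $\mathcal{A}_{\gamma}$, which produce the cancellation. One subtle point worth recording in the write-up is that assumption \textbf{(A3)} ensures $\vK_{\gamma}^{-1}$ is symmetric positive definite, so $\vK_{\gamma}^{-1/2}$ is well-defined and $\|\vK_{\gamma}^{-1/2}\vecu_{\hg}\|_{\gamma}^{2}\ge c_{\vK,\gamma}\|\vecu_{\hg}\|_\gamma^{2}\ge 0$, confirming that the right-hand side is indeed nonnegative and vanishes only when $\vecu_{\hg}=0$ and $\lambda_{\hg}=0$. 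This identity will subsequently be the coercivity ingredient needed for the stability and condition-number analysis of the inner DD system.
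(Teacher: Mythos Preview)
Your proof is correct and matches the paper's approach: the paper simply states that this result is ``immediately verified'' without giving any explicit argument, and your substitution $(\vecv_{\hg},\mu_{\hg})=(\vecu_{\hg},\lambda_{\hg})$ together with the cancellation of the skew-symmetric $b_\gamma$ terms is precisely that verification.
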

The  above results are  then used to prove the following
stability estimate for $A_{\gamma}+s_{\gamma}$.
\begin{thm}[Stability results] \label{thm:stability_estimate} \FIX{Let}{For}
$(\vecu_{\hg}, \lambda_{\hg})\in  \mathbf{V}_{\hg}\times M_{\hg}$, we have
\begin{alignat}{2}\label{stability_estimate_alpha}
\dfrac{1}{6(1+\tau\alpha_{\gamma})^2}||(\vecu_{\hg},
\lambda_{\hg})||_{1,\tau,\star}\!\!\leq\!\!
 \sup_{(\vecv_{\hg}, \mu_{\hg})\in  \mathbf{V}_{\hg}\times M_{\hg}}
 \dfrac{\mathcal{A}_{\gamma}((\vecu_{\hg}, \lambda_{\hg}),(\vecv_{\hg}, \mu_{\hg}))
+s_{\gamma}(\lambda_{\hg},\mu_{\hg})}{||(\vecv_{\hg}, \mu_{\hg})||_{1,\tau,\star}}.
\end{alignat}
If $\alpha_{\gamma}\rightarrow\infty$, we have
\begin{alignat}{2}\label{stability_estimate_noalpha}
\dfrac{1}{6(1\!+\!C_{\textn{dTr}}^{2}c_{\vK}^{-1}\frac{\tau}{h
})^2}||\!\left(\vecu_{\hg},
\lambda_{\hg}\right)\!||_{1,\tau,\star}\!\!\leq\!\!\!\!
 \sup_{(\vecv_{\hg}, \mu_{\hg})\in  \mathbf{V}_{\hg} \times M_{\hg}}\!\!\!\!\!
 \dfrac{\mathcal{A}_{\gamma}((\vecu_{\hg}, \lambda_{\hg}),(\vecv_{\hg}, \mu_{\hg}))
\!+\!s_{\gamma}(\lambda_{\hg},\mu_{\hg})}{||(\vecv_{\hg}, \mu_{\hg})||_{1,\tau,\star}}
\end{alignat}
\end{thm}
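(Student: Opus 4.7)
The strategy is the standard Babu\v{s}ka--Brezzi-type inf-sup argument by constructing an explicit test function, combined with positivity of $\mathcal{A}_\gamma$ and the spectral bounds for $s_\gamma$ from Lemma~\ref{lem:bound_on_s}.

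\textbf{Step 1 (coercive part).} First, test with $(\vecv_{\hg},\mu_{\hg})=(\vecu_{\hg},\lambda_{\hg})$. By the positivity identity and the symmetry/positivity of $s_\gamma$,
\[
\mathcal{A}_\gamma\bigl((\vecu_{\hg},\lambda_{\hg}),(\vecu_{\hg},\lambda_{\hg})\bigr)+s_\gamma(\lambda_{\hg},\lambda_{\hg})
=\|\vK_\gamma^{-\frac12}\vecu_{\hg}\|_\gamma^2+\|L_\gamma^{\frac12}\vecu_{\hg}\|_\gamma^2+\|\tau^{-\frac12}\lambda_{\hg}\|_\gamma^2+\|\lambda_{\hg}\|_{s,\gamma}^2.
\]
This already controls every contribution to $\|(\vecu_{\hg},\lambda_{\hg})\|_{1,\tau,\star}^2$ except the divergence term $\|\tau^{\frac12}\nabla_\tau\!\cdot\!\vecu_{\hg}\|_\gamma^2$.

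\textbf{Step 2 (recovering the divergence).} Because $\nabla_\tau\!\cdot\mathbf{V}_{\hg}=M_{\hg}$, the element $\phi_{\hg}\eq\tau\,\nabla_\tau\!\cdot\!\vecu_{\hg}$ is an admissible mortar test. I consider the composite test function
\[
(\vecv_{\hg},\mu_{\hg})=\bigl(\vecu_{\hg},\;\lambda_{\hg}+\theta\,\phi_{\hg}\bigr)
\]
for some $\theta>0$ to be optimized. Expanding $\mathcal{A}_\gamma+s_\gamma$ and using Step~1,
\[
\mathcal{A}_\gamma+s_\gamma
=\|\vK_\gamma^{-\frac12}\vecu_{\hg}\|_\gamma^2+\|L_\gamma^{\frac12}\vecu_{\hg}\|_\gamma^2+\|\tau^{-\frac12}\lambda_{\hg}\|_\gamma^2+\|\lambda_{\hg}\|_{s,\gamma}^2
+\theta\,\tau\|\nabla_\tau\!\cdot\!\vecu_{\hg}\|_\gamma^2
+\theta(\lambda_{\hg},\nabla_\tau\!\cdot\!\vecu_{\hg})_\gamma+\theta\,\tau\,s_\gamma(\lambda_{\hg},\nabla_\tau\!\cdot\!\vecu_{\hg}).
\]
The two cross terms are controlled by Cauchy--Schwarz and the upper bound $s_\gamma(\mu,\mu)\le\alpha_\gamma\|\mu\|_\gamma^2$ from Lemma~\ref{lem:bound_on_s}: using Young's inequality with weights absorbing a fraction of $\|\tau^{-\frac12}\lambda_{\hg}\|_\gamma^2$ and $\|\lambda_{\hg}\|_{s,\gamma}^2$, the two cross terms together cost at most $C\theta^2(1+\tau\alpha_\gamma)\,\tau\|\nabla_\tau\!\cdot\!\vecu_{\hg}\|_\gamma^2$.

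\textbf{Step 3 (tuning $\theta$).} Choosing $\theta$ of order $1/(1+\tau\alpha_\gamma)$ makes the net coefficient in front of $\tau\|\nabla_\tau\!\cdot\!\vecu_{\hg}\|_\gamma^2$ comparable to $1/(1+\tau\alpha_\gamma)$, while retaining a positive share of the other four squared quantities. This yields a lower bound of the form
\[
\mathcal{A}_\gamma+s_\gamma\;\gtrsim\;\frac{1}{1+\tau\alpha_\gamma}\,\|(\vecu_{\hg},\lambda_{\hg})\|_{1,\tau,\star}^2.
\]
In parallel, the triangle inequality applied to $\mu_{\hg}=\lambda_{\hg}+\theta\tau\nabla_\tau\!\cdot\!\vecu_{\hg}$, together again with the upper bound on $s_\gamma$, gives $\|(\vecv_{\hg},\mu_{\hg})\|_{1,\tau,\star}^2\lesssim (1+\tau\alpha_\gamma)\,\|(\vecu_{\hg},\lambda_{\hg})\|_{1,\tau,\star}^2$; the loss of an extra factor $(1+\tau\alpha_\gamma)$ here (rather than a cleaner bound) is what produces the quadratic dependence in \eqref{stability_estimate_alpha}. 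Tracking all constants and pushing the numerics through Young's inequality with the explicit split that keeps at least $\tfrac16$ of each term yields the factor $1/(6(1+\tau\alpha_\gamma)^2)$.

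\textbf{Step 4 (continuous-pressure limit).} For \eqref{stability_estimate_noalpha} the same construction and test function are used, but the $\alpha_\gamma$-upper bound on $s_\gamma$ is replaced by the $h^{-1}$-upper bound $s_\gamma(\mu,\mu)\le C_{\textn{dTr}}^2 c_\vK^{-1}h^{-1}\|\mu\|_\gamma^2$ from \eqref{def_inter_oper_cont}. This substitutes $\alpha_\gamma$ by $C_{\textn{dTr}}^2 c_\vK^{-1}h^{-1}$ in every estimate of Steps 2--3 and reproduces the announced bound.

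The main obstacle is \textbf{Step~2}: the cross coupling between the mortar pressure and $\nabla_\tau\!\cdot\!\vecu_{\hg}$ through both the $\tfrac1\tau(\lambda,\mu)_\gamma$ term and the non-local term $s_\gamma(\lambda,\mu)$ ties the amplitude $\theta$ to $(1+\tau\alpha_\gamma)$, and the asymmetry between the gain $\theta$ in the lower bound and the loss $\theta^2(1+\tau\alpha_\gamma)$ in the test-function norm is precisely what dictates the explicit $(1+\tau\alpha_\gamma)^2$ (resp.\ $(1+C_{\textn{dTr}}^2c_\vK^{-1}\tau/h)^2$) penalty. All remaining computations are routine applications of Young's and Cauchy--Schwarz inequalities within the norms \eqref{discrete_norms}.
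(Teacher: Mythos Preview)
Your approach is correct and shares the paper's core idea: augment the pressure test function by $\theta\tau\nabla_\tau\!\cdot\!\vecu_{\hg}$ to recover the missing divergence contribution. There is one structural difference and one slip in your constant-tracking worth noting.

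\textbf{Comparison with the paper.} The paper uses a richer test pair
\[
(\vecv_{\hg},\mu_{\hg})=(\vecu_{\hg}-\delta_2\vecr_{\hg},\ \lambda_{\hg}+\delta_1\tau\nabla_\tau\!\cdot\!\vecu_{\hg}),
\]
where $\vecr_{\hg}\in\mathbf{V}_{\hg}$ comes from the $b_\gamma$ inf-sup construction (an auxiliary elliptic problem on $\gamma$ giving $b_\gamma(\vecr_{\hg},\lambda_{\hg})=\|\tau^{-1/2}\lambda_{\hg}\|_\gamma^2$). You drop the velocity correction entirely. In fact this $\vecr_{\hg}$ piece is redundant for the stated bound: the term $\|\tau^{-1/2}\lambda_{\hg}\|_\gamma^2$ is already provided by Step~1, so your leaner test function suffices. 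Your argument is thus a genuine simplification of the paper's.

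\textbf{Constant-tracking.} Your explanation of where the quadratic $(1+\tau\alpha_\gamma)^2$ comes from is not quite right. If, as you describe in Step~2, you absorb the $s_\gamma$ cross term into $\|\lambda_{\hg}\|_{s,\gamma}^2$ and the other into $\|\tau^{-1/2}\lambda_{\hg}\|_\gamma^2$, the total cost on the divergence is $\tfrac{\theta^2}{2}(1+\tau\alpha_\gamma)$, so $\theta=1/(1+\tau\alpha_\gamma)$ leaves a net divergence coefficient $1/(2(1+\tau\alpha_\gamma))$. Moreover, with this $\theta$ one checks $\|\theta\tau\nabla_\tau\!\cdot\!\vecu_{\hg}\|_{M_{\hg}}^2\le\theta^2(1+\tau\alpha_\gamma)\|\tau^{1/2}\nabla_\tau\!\cdot\!\vecu_{\hg}\|_\gamma^2\le\|\tau^{1/2}\nabla_\tau\!\cdot\!\vecu_{\hg}\|_\gamma^2$, so the test-function norm is bounded by a constant \emph{independent} of $\tau\alpha_\gamma$ --- there is no extra $(1+\tau\alpha_\gamma)$ loss as you claim. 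Carried through carefully, your route actually delivers a \emph{linear} penalty $c/(1+\tau\alpha_\gamma)$, stronger than \eqref{stability_estimate_alpha}. The paper's quadratic arises because it bounds $s_\gamma(\lambda_{\hg},\tau\nabla_\tau\!\cdot\!\vecu_{\hg})$ via $\alpha_\gamma\|\lambda_{\hg}\|_\gamma\|\nabla_\tau\!\cdot\!\vecu_{\hg}\|_\gamma$ and absorbs both cross terms into $\|\tau^{-1/2}\lambda_{\hg}\|_\gamma^2$ alone, forcing $\delta_1\sim(1+\tau\alpha_\gamma)^{-2}$. Either way the stated theorem holds; your variant simply yields a sharper constant.
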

\begin{proof}
 Let us first recall \FIX{this}{the}  inf-sup  condition;
 given $\lambda_{\hg}\in
 M_{\hg}$, we construct \FIX{an element}{} $\vecr_{\hg}\in \mathbf{V}_{\hg}$
 such that
\begin{alignat}{2}\label{inf_sup_b}
 b_{\gamma}(\vecr_{\hg},\lambda_{\hg})=||\lambda_{\hg}||_{\gamma}^{2},\textn{ and }||\lambda_{\hg}||_{\gamma}\leq C(\gamma) ||\vecr_{\hg}||_{\gamma}.
\end{alignat}
 Let $\Psi_{\gamma}\in H^{2}_{0}(\gamma)$ \FIX{be the solution
    of }{satisfy} $-\Delta_{\tau} \Psi_{\gamma}=\tau^{-1}\lambda_{\hg}$. \FIX{Pose}{Take}
 $\vecr_{\gamma}=-\nabla_{\tau} \Psi_{\gamma}$ and let
 $\vecr_{\hg}=\Pi_{h,\gamma}\vecr_{\gamma}$, where $\Pi_{h,\gamma}$
 is the Raviart-Thomas projection onto
 $\mathbf{V}_{\hg}$~\cite{MR3264361,MR3829517}. Then\FIX{, we have}{}
 $\nabla_{\tau}\cdot \vecr_{\hg}=\Pi_{h,\gamma}\nabla_{\tau}\cdot
 \vecr_{\gamma}=\tau^{-1}\lambda_{\hg}$\FIX{.
 Hence,}{ and} $b_{\gamma}(\vecr_{\hg},\lambda_{\hg})=
 ||\tau^{-\frac{1}{2}}\lambda_{\hg}||_{\gamma}^{2}$.
 \FIX{Furthermore, we have}{Finally,}
 $||\vecr_{\hg}||_{\gamma}^{2}
 %=||\Pi_{h,\gamma}\vecr_{\gamma}||_{\gamma}^{2}
 \leq C||\vecr_{\gamma}||_{1,\gamma}^{2}
 % =C||\nabla_{\tau} \Psi_{\gamma}||_{1,\gamma}^{2}
 \leq C||\Psi_{\gamma}||_{2,\gamma}^{2}\leq C(\gamma)||\tau^{-\frac{1}{2}}\lambda_{\hg}||_{\gamma}^{2}$.
 \FIX{Now, consider}{Set} $\delta_{1},\delta_{2}>0$, and let
 $\vecv_{\hg}=
 \vecu_{\hg}-\delta_{2}\vecr_{\hg}$ and
 $\mu_{\hg}=\lambda_{\hg}+\delta_{1}\tau\nabla_{\tau}\cdot\vecu_{\hg}$, where
 $\vecr_{\hg}$ \FIX{is}{} from~\eqref{inf_sup_b}.
 We \FIX{have}{get}
 \begin{alignat}{2}
\nonumber&  \mathcal{A}_{\gamma}((\vecu_{\hg}, \lambda_{\hg}),(\vecv_{\hg}, \mu_{\hg}))
+s_{\gamma}(\lambda_{\hg},\mu_{\hg})=\{\mathcal{A}_{\gamma}((\vecu_{\hg},
\lambda_{\hg}),(\vecu_{\hg}, \lambda_{\hg}))
+s_{\gamma}(\lambda_{\hg},\lambda_{\hg})\}\\
&\label{decompo_A}\qquad+\delta_{1}\{\mathcal{A}_{\gamma}((\vecu_{\hg},
\lambda_{\hg}),\tau(\mathbf{0}, \nabla_{\tau}\cdot\vecu_{\hg}))
+s_{\gamma}(\lambda_{\hg},\tau\nabla_{\tau}\cdot\vecu_{\hg})\}
-\delta_{2}\{\mathcal{A}_{\gamma}((\vecu_{\hg},
\lambda_{\hg}),(\vecr_{\hg}, 0))
\}.
 \end{alignat}
For the first term on the right-hand side of~\eqref{decompo_A}, we obtain using estimate~\eqref{positivity_A} together with~\eqref{seminorm},
\begin{alignat}{2}
 \nonumber\mathcal{A}_{\gamma}((\vecu_{\hg}, \lambda_{\hg}),(\vecu_{\hg}, \lambda_{\hg}))
+s_{\gamma}(\lambda_{\hg},\lambda_{\hg})=||\vK_{\gamma}^{-\frac{1}{2}}\vecu_{\hg}||^{2}_{\gamma}
+||L_{\gamma}^{\frac{1}{2}}\vecu_{\hg}||^{2}_{\gamma}+
  ||\tau^{-\frac{1}{2}}\lambda_{\hg}||_{\gamma}^{2}+
  ||\lambda_{\hg}||_{s,\gamma}^{2}.
\end{alignat}
For the second term,
we get for all $\epsilon_{1}>0$,
 \begin{gather*}
 \mathcal{A}_{\gamma}((\vecu_{\hg}, \lambda_{\hg}),\tau(\mathbf{0}, \nabla_{\tau}\cdot\vecu_{\hg}))
 +s_{\gamma}(\lambda_{\hg},\tau\nabla_{\tau}\cdot\vecu_{\hg})
= ||\tau^{\frac{1}{2}}\nabla_{\tau}\cdot\vecu_{\hg}||^{2}_{\gamma}+
(\lambda_{\hg},\nabla_{\tau}\cdot\vecu_{\hg})_{\gamma}
  \\+s_{\gamma}(\lambda_{\hg},\tau\nabla_{\tau}\cdot\vecu_{\hg})
 \geq ||\tau^{\frac{1}{2}}\nabla_{\tau}\cdot\vecu_{\hg}||^{2}_{\gamma}
  -||\tau^{\frac{1}{2}}\nabla_{\tau}\cdot\vecu_{\hg}||_{\gamma}
  ||\tau^{-\frac{1}{2}}\lambda_{\hg}||_{\gamma} -\alpha_{\gamma}\tau||
  \tau^{\frac{1}{2}}\nabla_{\tau}\cdot\vecu_{\hg}||_{\gamma} ||\tau^{-\frac{1}{2}}\lambda_{\hg}||_{\gamma} \\
  \geq \left(1-\epsilon_{1}\dfrac{\left(1+\tau\alpha_{\gamma}\right)}{2}\right)||
  \tau^{\frac{1}{2}}\nabla_{\tau}\cdot\vecu_{\hg}||^{2}_{\gamma}
 -\dfrac{\left(1+\tau\alpha_{\gamma}\right)}{2\epsilon_{1}}||\tau^{-\frac{1}{2}}\lambda_{\hg}||^{2}_{\gamma},
 \end{gather*}
 where we have used the continuity of $s_{\gamma}$, i.e., $ s_{\gamma}(\lambda_{\hg},\mu_{\hg})\leq ||\lambda_{\hg}||_{s,\gamma}||\mu_{\hg}||_{s,\gamma}\leq \alpha_{\gamma}||\lambda_{\hg}||_{\gamma}||\mu_{\hg}||_{\gamma}. $
% \begin{equation}
%
% \end{equation}

\noindent For the last term,
using~\ref{ass_K_omega} together with~\eqref{inf_sup_b} (first equation),
we obtain for all $\epsilon_{2}>0$,
\begin{align*}
\mathcal{A}_{\gamma}((\vecu_{\hg}, \lambda_{\hg}),(\vecr_{\hg}, 0)) &\leq
\dfrac{1}{2\epsilon_{2}}(||\vK_{\gamma}^{-\frac{1}{2}}\vecu_{\hg}||_{\gamma}^{2}+||L_{\gamma}^{\frac{1}{2}}\vecu_{\hg}||_{\gamma}^{2})
+\dfrac{\epsilon_{2}}{2}(||\vK_{\gamma}^{-\frac{1}{2}}\vecr_{\hg}||_{\gamma}^{2}+||L_{\gamma}^{\frac{1}{2}}\vecr_{\hg}||_{\gamma}^{2}),\\
-b_{\gamma}(\vecr_{\hg},\lambda_{\hg})
% & =\dfrac{1}{2\epsilon_{2}}(||\vK_{\gamma}^{-\frac{1}{2}}\vecu_{\hg}||_{\gamma}^{2}+||L_{\gamma}^{\frac{1}{2}}\vecu_{\hg}||_{\gamma}^{2})
% +\dfrac{\epsilon_{2}}{2}(||\vK_{\gamma}^{-\frac{1}{2}}\vecr_{\hg}||_{\gamma}^{2}+||L_{\gamma}^{\frac{1}{2}}\vecr_{\hg}||_{\gamma}^{2})
% -||\tau^{-\frac{1}{2}}\lambda_{\hg}||_{\gamma}^{2}\\
& \leq\dfrac{1}{2\epsilon_{2}}(||\vK_{\gamma}^{-\frac{1}{2}}\vecu_{\hg}||_{\gamma}^{2}+||L_{\gamma}^{\frac{1}{2}}\vecu_{\hg}||_{\gamma}^{2})
+\dfrac{\epsilon_{2}}{2}(C_{\vK,\gamma}+L_{\gamma})C(\gamma)||\vecr_{\hg}||_{\gamma}^{2}
-||\tau^{-\frac{1}{2}}\lambda_{\hg}||_{\gamma}^{2}. \end{align*}
Thus,  with~\eqref{inf_sup_b} (second equation),
\begin{gather*}
-\delta_{2}\mathcal{A}_{\gamma}((\vecu_{\hg},
\lambda_{\hg}),(\vecr_{\hg},0)\!\geq\!
\delta_{2}\!\left(1-\epsilon_{2}\dfrac{C(\gamma)(C_{\vK,\gamma}+L_{\gamma})}{2}\right)\!||\tau^{-\frac{1}{2}}\lambda_{\hg}||_{\gamma}^{2}
-\dfrac{\delta_{2}}{2\epsilon_{2}}(||\vK_{\gamma}^{-\frac{1}{2}}\vecu_{\hg}||_{\gamma}^{2}+||L_{\gamma}^{\frac{1}{2}}\vecu_{\hg}||_{\gamma}^{2}).
\end{gather*}
Collecting the previous results we get
%  \begin{alignat}{2}
% \nonumber&  \mathcal{A}_{\gamma}((\lambda_{\hg},\vecu_{\hg}),(\mu_{\hg},\vecv_{\hg}))\geq
% (c_{\vK,\gamma}+L_{\gamma})||\vecu_{\hg}||_{\gamma}^{2}+
%  (c_{s}+\dfrac{1}{\tau }) ||\lambda_{\hg}||_{\gamma}^{2}\\
% \nonumber&\qquad\qquad +\delta_{1}\{(1-(\alpha_{\gamma}+\dfrac{1}{\tau })\dfrac{\epsilon_{1}}{2})||\nabla_{\tau}\cdot\vecu_{\gamma}||^{2}_{\gamma}
%  -(\alpha_{\gamma}+\dfrac{1}{\tau })\dfrac{1}{\epsilon_{1}}||\lambda_{\hg}||^{2}_{\gamma}\}\\
% &\qquad\qquad\qquad+\delta_{2}(1-\epsilon_{2}C(\gamma)(C_{\vK}+L_{\gamma}))||\lambda_{\hg}||_{\gamma}^{2}
% -\delta_{2}\dfrac{(C_{\vK}+L_{\gamma})}{2\epsilon_{2}}||\vecu_{\hg}||_{\gamma}^{2}.
% % &=(c_{\vK,\gamma}+L_{\gamma}-\delta_{2}\dfrac{(C_{\vK}+L_{\gamma})}{2\epsilon_{2}})||\vecu_{\hg}||_{\gamma}^{2}
% % +(c_{m,s}+\dfrac{1}{\tau }-\delta_{1}(\alpha_{\gamma}+\dfrac{1}{\tau })\dfrac{1}{\epsilon_{1}}) ||\lambda_{\hg}||_{\gamma}^{2}
% % +\delta_{1}\{(1-(\alpha_{\gamma}+\dfrac{1}{\tau })\dfrac{\epsilon_{1}}{2})||\nabla_{\tau}\cdot\vecu_{\gamma}||^{2}_{\gamma}
% % +\delta_{2}(1-C(\gamma)(C_{\vK}+L_{\gamma})\dfrac{\epsilon_{2}}{2}||\lambda_{\hg}||_{\gamma}^{2}
%  \end{alignat}
 \begin{align*}
\mathcal{A}_{\gamma}((\vecu_{\hg}, &\lambda_{\hg}),(\vecv_{\hg}, \mu_{\hg}))+s_{\gamma}(\lambda_{\hg},\mu_{\hg})\\
&\geq
\left(1-\dfrac{\delta_{2}}{2\epsilon_{2}}\right)\left(||\vK_{\gamma}^{-\frac{1}{2}}\vecu_{\hg}||^{2}_{\gamma}
+||L_{\gamma}^{\frac{1}{2}}\vecu_{\hg}||^{2}_{\gamma}\right)
+\delta_{1}\left(1-\epsilon_{1}\dfrac{(1+\tau\alpha_{\gamma})}{2}\right)||\tau^{\frac{1}{2}}\nabla_{\tau}\cdot\vecu_{\hg}||^{2}_{\gamma}\\
%\label{estimatee_lower_A_s}
&\quad +\left(1-\delta_{1}\dfrac{(1+\tau\alpha_{\gamma})}{2\epsilon_{1}}\right) ||\tau^{-\frac{1}{2}}\lambda_{\hg}||_{\gamma}^{2}
+||\lambda_{\hg}||_{s,\gamma}^{2}+\delta_{2}\left(1-\epsilon_{2}\dfrac{C(\gamma)(C_{\vK,\gamma}+L_{\gamma})}{2}\right)||\tau^{-\frac{1}{2}}\lambda_{\hg}||_{\gamma}^{2}.
\end{align*}
Now, let us choose  the  parameters
$\epsilon_{i}$ and $\delta_{i}$
such that all the norms in \FIX{~\eqref{estimatee_lower_A_s}}{the previous
inequality}
are multiplied by positive coefficients.  We choose
$\epsilon_{1}=1/(1+\tau\alpha_{\gamma})$ and  $\delta_{1}=1/(1+\tau\alpha_{\gamma})^2$,
and then $\epsilon_{2}={2}/[{C(\gamma)(C_{\vK,\gamma}+L_{\gamma})}]$ and
 $\delta_{2}={2}/[{C(\gamma)(C_{\vK,\gamma}+L_{\gamma})+1}]$, to  get
 \begin{gather*}
\mathcal{A}_{\gamma}((\vecu_{\hg}, \lambda_{\hg}),(\vecv_{\hg}, \mu_{\hg}))+s_{\gamma}(\lambda_{\hg},\mu_{\hg})
\geq
\dfrac{C(\gamma)(C_{\vK,\gamma}+L_{\gamma})+2}{2(C(\gamma)(C_{\vK,\gamma}+L_{\gamma})+1)}\left(||\vK_{\gamma}^{-\frac{1}{2}}\vecu_{\hg}||^{2}_{\gamma}
+||L_{\gamma}^{\frac{1}{2}}\vecu_{\hg}||^{2}_{\gamma}\right)\\
+\dfrac{1}{2(1+\tau\alpha_{\gamma})^2}||\tau^{\frac{1}{2}}\nabla_{\tau}\cdot\vecu_{\hg}||^{2}_{\gamma}
+\dfrac{1}{2} ||\tau^{-\frac{1}{2}}\lambda_{\hg}||_{\gamma}^{2}+||\lambda_{\hg}||_{s,\gamma}^{2}.
\end{gather*}
Thus,
\begin{alignat}{2}
 \label{estimate_lower_A_s} \mathcal{A}_{\gamma}((\vecu_{\hg},
 \lambda_{\hg}),(\vecv_{\hg},
 \mu_{\hg}))+s_{\gamma}(\lambda_{\hg},\mu_{\hg})\geq
 {(4+4\tau\alpha_{\gamma})^{-2}}||(\vecu_{\hg}, \lambda_{\hg})||^{2}_{1,\tau,\star}.
\end{alignat}
\FIX{We also have}{From the choice of $ \vecv_{\hg} $ and $ \mu_{\hg} $, we have that}
\begin{alignat}{2}
 \nonumber||(\vecv_{\hg},
 \mu_{\hg})||_{1,\tau,\star}
 %&=||(\vecu_{\hg}-\delta_{2}\vecr_{\hg}, \lambda_{\hg}+\delta_{1}\tau\nabla_{\tau}\cdot\vecu_{\hg})||_{1,\tau,\star}\\
 \nonumber
 &\leq
 ||(\vecu_{\hg}, \lambda_{\hg})||_{1,\tau,\star}+
 \delta_{1}||(\mathbf{0}, \tau\nabla_{\tau}\cdot\vecu_{\hg})||_{1,\tau,\star}+\delta_{2}
 ||(\vecr_{\hg}, 0)||_{1,\tau,\star}.
%  &\leq C
%  ||(\lambda_{\hg},\vecu_{\hg})||_{1,\tau,\star}+\delta_{2}
%  ||(0,\vecr_{\hg})||_{1,\tau,\star}.
\end{alignat}
With simple calculations, it is inferred that
\bse\begin{alignat}{2}
         &\delta_{1}||(\mathbf{0}, \tau\nabla_{\tau}\cdot\vecu_{\hg})||_{1,\tau,\star}\leq
         {(1+\tau\alpha_{\gamma})^{-\frac{3}{2}}}||(\vecu_{\hg}, \lambda_{\hg})||_{1,\tau,\star},\\
        &\delta_{2}||(\vecr_{\hg}, 0)||_{1,\tau,\star}\leq
        \dfrac{2\sqrt{C(\gamma)(C_{\vK,\gamma}+L_{\gamma})+1}}{C(\gamma)(C_{\vK,\gamma}+L_{\gamma})+2}||(\vecu_{\hg}, \lambda_{\hg})||_{1,\tau,\star}.
        \end{alignat}\ese
 This implies that we have
%\begin{equation*} \label{estimate_norm_star}
$
||(\vecv_{\hg}, \mu_{\hg})||_{1,\tau,\star}\leq 3
 ||(\vecu_{\hg}, \lambda_{\hg})||_{1,\tau,\star}$.
%\end{equation*}
This \FIX{result}{} together with~\eqref{estimate_lower_A_s} leads
to~\eqref{stability_estimate_alpha}. For \FIX{the limit case when}{}
$\alpha_{\gamma}\rightarrow\infty$, we repeat the same \FIX{lines as before}{proof} while
using~\eqref{def_inter_oper_cont} instead of~\eqref{def_inter_oper_disc}
\FIX{to promptly arrive to~}{to get} \eqref{stability_estimate_noalpha}.
\end{proof}

\begin{lem}[Well-posedness of the DD scheme]
 The domain decomposition scheme~\eqref{compact_form_lscheme} is well-posed, and all eigenvalues of the induced
   system~$\mathcal{A}_{\gamma}+s_{\gamma}$
 are bounded away from zero.
\end{lem}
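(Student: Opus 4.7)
The plan is to derive this lemma as an immediate corollary of the stability bounds already proved in Theorem~\ref{thm:stability_estimate}. Since \eqref{compact_form_lscheme} is a square linear system on the finite-dimensional space $\mathbf{V}_{\hg}\times M_{\hg}$, well-posedness reduces to showing that the kernel of $\mathcal{A}_{\gamma}+s_{\gamma}$ is trivial; the usual rank-nullity then furnishes existence, uniqueness and continuous dependence on the data $\mathcal{F}_{\gamma}^{n,k-1}$.

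First I would assume $(\vecu_{\hg},\lambda_{\hg})\in\mathbf{V}_{\hg}\times M_{\hg}$ satisfies $\mathcal{A}_{\gamma}((\vecu_{\hg},\lambda_{\hg}),(\vecv_{\hg},\mu_{\hg}))+s_{\gamma}(\lambda_{\hg},\mu_{\hg})=0$ for every test pair $(\vecv_{\hg},\mu_{\hg})\in\mathbf{V}_{\hg}\times M_{\hg}$. The numerator of the supremum appearing in the stability estimate \eqref{stability_estimate_alpha} then vanishes identically, so that $\|(\vecu_{\hg},\lambda_{\hg})\|_{1,\tau,\star}=0$, forcing $(\vecu_{\hg},\lambda_{\hg})=(\mathbf{0},0)$. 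The limiting regime $\alpha_{\gamma}\to\infty$ is handled analogously using \eqref{stability_estimate_noalpha}. This proves injectivity, hence invertibility, of the block matrix $\mathcal{A}_{\textn{DD}}$ introduced in \eqref{compact_system}, which is exactly the well-posedness assertion.

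For the spectral bound, I would reinterpret the inf–sup inequality of Theorem~\ref{thm:stability_estimate} as an operator-level lower bound of the form $\|(\mathcal{A}_{\gamma}+s_{\gamma})(\vecu_{\hg},\lambda_{\hg})\|_{\star}\geq C(\tau,\alpha_{\gamma})\,\|(\vecu_{\hg},\lambda_{\hg})\|_{1,\tau,\star}$, where $\|\cdot\|_{\star}$ denotes the norm dual to $\|\cdot\|_{1,\tau,\star}$ and $C(\tau,\alpha_{\gamma})=[6(1+\tau\alpha_{\gamma})^{2}]^{-1}$ (respectively the analogue obtained from \eqref{stability_estimate_noalpha}). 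If $\mu$ is any eigenvalue of $\mathcal{A}_{\gamma}+s_{\gamma}$ with eigenvector $(\vecu_{\hg},\lambda_{\hg})\neq 0$, then the identity $(\mathcal{A}_{\gamma}+s_{\gamma})(\vecu_{\hg},\lambda_{\hg})=\mu(\vecu_{\hg},\lambda_{\hg})$ combined with the above lower bound immediately delivers $|\mu|\geq C(\tau,\alpha_{\gamma})>0$, uniformly in the mesh size $h$.

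The only genuine subtlety is properly identifying the dual pairing so that the inf–sup bound of Theorem~\ref{thm:stability_estimate} indeed controls the spectrum of $\mathcal{A}_{\textn{DD}}$; beyond this book-keeping, the lemma is essentially a direct consequence of the stability machinery already in place, and no fresh calculation with the bilinear forms $a_{\gamma}$, $b_{\gamma}$ or $s_{\gamma}$ is required.
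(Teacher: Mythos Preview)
Your proposal is correct and follows exactly the paper's approach: the paper's proof consists of the single sentence ``This directly follows from non-singularity of $\mathcal{A}_{\gamma}+s_{\gamma}$ and estimates in Theorem~\ref{thm:stability_estimate},'' and you have simply unpacked that sentence by showing that the inf--sup bound forces a trivial kernel (hence invertibility on the finite-dimensional space) and provides the uniform lower spectral bound. The subtlety you flag about the dual pairing is indeed only bookkeeping, since all norms on $\mathbf{V}_{\hg}\times M_{\hg}$ are equivalent.
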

\begin{proof}
    \FIX{The matrix associated to~$\mathcal{A}_{\gamma} +s_{\gamma}$ is non-singular,
    \FIX{that is to say that}{so} the system~\eqref{compact_form_lscheme} has a unique
    solution.  \FIX{ Moreover, t}{T}he stability estimate~\eqref{stability_estimate_alpha}
    \FIX{(also~\eqref{stability_estimate_noalpha})}{and ~\eqref{stability_estimate_noalpha}}   guarantee  that  the lowest
    eigenvalue is bounded away from zero.}{This directly follows from non-singularity of $\mathcal{A}_{\gamma} +s_{\gamma}$ and estimates in Theorem~\ref{thm:stability_estimate}.}
\end{proof}
Let us comment on the robustness of the stability estimate in
Theorem~\ref{thm:stability_estimate}.
%i.e., on the dependence on its parameters.
First, \eqref{stability_estimate_alpha} states that, regardless of the choice of
the space and time discretization, the stability constant with respect to
\FIX{the norm}{} $||(\vecu_{\hg}, \lambda_{\hg})||_{1,\tau,\star}$ is independent of the
coefficients $\vK$,  $\vK_{\gamma}$, and the stabilization parameter
$L_{\gamma}$. One can also show that this estimate is asymptotically robust and
bounded independently of  $(\tau,\alpha_{\gamma},h)\rightarrow0$ and the stability
constant tends to $1/6$. The only issue can happen having a large coefficient
$\alpha_{\gamma}$, but this  case is resolved
in~\eqref{stability_estimate_noalpha}.  Therein, as the ratio
${\tau}/{h}\rightarrow0$,  the stability constant is approximately $1/6$.

Following  the  approach  of  Ern  and  Guermond~\cite{MR2223503},  we  now
provide  an  estimate  for  the  condition  number  of  the  stiffness matrix
associated  with the domain decomposition
system~$\mathcal{A}_{\gamma}+s_{\gamma}$.
This condition number estimate is important in our analysis as any algorithm is
stable if every step is well-conditioned.
% It is unstable if any step is ill-conditioned.
This  will also encourage the development of the flux basis
framework in~\Cref{sec:Mufbi}.  Let us first  introduce some basic notation in
order to provide the definition of the condition number. We recall the stiffness
matrix $\mathcal{A}_{\textn{DD}}$ introduced in~\eqref{compact_system}
associated with the domain decomposition scheme~\eqref{compact_form_lscheme},
\begin{alignat}{2} \label{eq:stiffness_matrix}
  (\mathcal{A}_{\textn{DD}}V,W)_{N}\eq \mathcal{A}_{\gamma}((\vecu_{\hg},
  \lambda_{\hg}),(\vecv_{\hg}, \mu_{\hg}))+s_{\gamma}(\lambda_{\hg},\mu_{\hg}),
 \end{alignat}
 for all $(\vecu_{\hg}, \lambda_{\hg})$, $(\vecv_{\hg},\mu_{\hg})\in  \mathbf{V}_{\hg}\times M_{\hg}$,
 where  $(V,W)_{N}\eq\sum_{i=1}^{N}V_{i} W_{i}$ denotes the inner product in
$\RR^{N}$ and $|V|_{N}^{2}\eq (V,V)_{N}$ is
the corresponding Euclidean norm. The condition number is defined by \FIX{
\begin{gather*}
 \kappa(\mathcal{A}_{\textn{DD}})\eq|\mathcal{A}_{\textn{DD}}|_{N}|\mathcal{A}_{\textn{DD}}|^{-1}_{N},
\end{gather*}
}{$\kappa(\mathcal{A}_{\textn{DD}})\eq|\mathcal{A}_{\textn{DD}}|_{N}|\mathcal{A}_{\textn{DD}}|^{-1}_{N}$}
where
\begin{gather*}
 |\mathcal{A}_{\textn{DD}}|_{N}\eq\sup_{V\in \RR^{N}\setminus\mathbf{0}}\sup_{W\in \RR^{N}\setminus\mathbf{0}}\dfrac{(\mathcal{A}_{\textn{DD}}V,W)_{N}}{|V|_{N}|W|_{N}}
 \FIX{}{ =\sup_{V\in \RR^{N}
 \setminus\mathbf{0}}\dfrac{|\mathcal{A}_{\textn{DD}}|_{N}}{|V|_{N}}.
 }
\end{gather*}
\FIX{which is equivalent to
\begin{gather*}
 |\mathcal{A}_{\textn{DD}}|_{N}\eq\sup_{V\in \RR^{N} \setminus\mathbf{0}}\dfrac{|\mathcal{A}_{\textn{DD}}|_{N}}{|V|_{N}}.
\end{gather*}}{}We recall the following estimate that holds true for a conforming,
 quasi-uniform mesh $\Tau_{h}$ \cite{MR2223503}; there exists $c_{\mu},C_{\mu}>0$
 such that the following equivalence holds
\begin{alignat}{2}\label{estimate_ell2}
 c_{\mu}h^{d/2}|V|_{N}\leq ||V||_{0,\tau,\star}\leq C_{\mu}h^{d/2}|V|_{N}.
\end{alignat}
\vskip -5mm
\begin{thm}[Condition number estimate]\! \label{thm:cond_num}
\FIX{The condition number of the domain decomposition scheme}{The condition number $\kappa(\mathcal{A}_{\textn{DD}})$ of \eqref{compact_form_lscheme} is bounded as}
\begin{equation}\label{condition_numb_alpha}
 \kappa(\mathcal{A}_{\textn{DD}}) \lesssim 6(1+\tau\alpha_{\gamma})^2 \max((1+C_{\textn{inv}} c_{\vK,\gamma}\tau h^{-2}),(1+\alpha_{\gamma}\tau)).
\end{equation}
Furthermore, if $\alpha_{\gamma}\rightarrow\infty$,
\begin{equation}\label{condition_numb_noalpha}
 \kappa(\mathcal{A}_{\textn{DD}}) \lesssim 6(1+C_{\textn{dTr}}^2c_{\vK}^{-1} \tau h^{-1})^2 \max((1+C_{\textn{inv}} c_{\vK,\gamma}\tau h^{-2}),(1+C_{\textn{dTr}}^2c_{\vK}^{-1} \tau h^{-1})).
\end{equation}
\end{thm}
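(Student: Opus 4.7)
The plan is to combine the stability estimate of Theorem~\ref{thm:stability_estimate} with the inverse energy estimates and the equivalence~\eqref{estimate_ell2} between the $\|\cdot\|_{0,\tau,\star}$-norm on discrete functions and the Euclidean $|\cdot|_N$-norm on coefficient vectors. By definition, $\kappa(\mathcal{A}_{\textn{DD}})=|\mathcal{A}_{\textn{DD}}|_N\,|\mathcal{A}_{\textn{DD}}^{-1}|_N$, so I would estimate the two factors separately; the main obstacle is tracking the different sources of constants (boundedness, inverse-in-$h$ blow-up, and the mass-matrix scaling $h^{d/2}$) so that the final bound has the advertised form, but no single step is hard.

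To bound $|\mathcal{A}_{\textn{DD}}|_N$ from above, I would start from~\eqref{eq:stiffness_matrix}, apply boundedness of $\mathcal{A}_\gamma+s_\gamma$ with respect to $\|\cdot\|_{1,\tau,\star}$, then use the inverse energy estimate~\eqref{estim_1_0_alpha} to pass from $\|\cdot\|_{1,\tau,\star}$ to $\|\cdot\|_{0,\tau,\star}$, and finally~\eqref{estimate_ell2} to pass to $|\cdot|_N$. This yields
\begin{equation*}
(\mathcal{A}_{\textn{DD}}V,W)_N\leq \|(\vecu_{\hg},\lambda_{\hg})\|_{1,\tau,\star}\|(\vecv_{\hg},\mu_{\hg})\|_{1,\tau,\star}\leq C_\mu^2\,h^{d}\,M(\tau,h,\alpha_\gamma)\,|V|_N|W|_N,
\end{equation*}
with $M(\tau,h,\alpha_\gamma)\eq\max\bigl(1+C_{\textn{inv}}c_{\vK,\gamma}\tau h^{-2},\,1+\tau\alpha_\gamma\bigr)$, so that $|\mathcal{A}_{\textn{DD}}|_N\lesssim h^{d}M(\tau,h,\alpha_\gamma)$.

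To bound $|\mathcal{A}_{\textn{DD}}^{-1}|_N$ I would fix $F\in\mathbb{R}^N$, set $V\eq\mathcal{A}_{\textn{DD}}^{-1}F$, and apply the inf-sup bound~\eqref{stability_estimate_alpha} to the associated function $(\vecu_{\hg},\lambda_{\hg})$. For any test function $(\vecv_{\hg},\mu_{\hg})$ with coefficient vector $W$, Cauchy--Schwarz in $\RR^N$ together with the lower bound in~\eqref{estimate_ell2} gives $\mathcal{A}_\gamma(\cdot,\cdot)+s_\gamma(\cdot,\cdot)=(F,W)_N\leq |F|_N\,c_\mu^{-1}h^{-d/2}\|(\vecv_{\hg},\mu_{\hg})\|_{0,\tau,\star}\leq |F|_N\,c_\mu^{-1}h^{-d/2}\|(\vecv_{\hg},\mu_{\hg})\|_{1,\tau,\star}$, so that
\begin{equation*}
\|(\vecu_{\hg},\lambda_{\hg})\|_{1,\tau,\star}\leq 6(1+\tau\alpha_\gamma)^2 c_\mu^{-1}h^{-d/2}|F|_N.
\end{equation*}
One more application of~\eqref{estimate_ell2} (together with $\|\cdot\|_{0,\tau,\star}\leq\|\cdot\|_{1,\tau,\star}$) yields $|V|_N\lesssim (1+\tau\alpha_\gamma)^2 h^{-d}|F|_N$, hence $|\mathcal{A}_{\textn{DD}}^{-1}|_N\lesssim (1+\tau\alpha_\gamma)^2 h^{-d}$.

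Multiplying the two bounds makes the $h^{\pm d}$ cancel and produces exactly~\eqref{condition_numb_alpha}. For the limit case $\alpha_\gamma\to\infty$ I would run an identical argument, but invoke~\eqref{estim_1_0_noalpha} instead of~\eqref{estim_1_0_alpha} and the asymptotic stability bound~\eqref{stability_estimate_noalpha} instead of~\eqref{stability_estimate_alpha}; the prefactor $(1+\tau\alpha_\gamma)^2$ is replaced by $(1+C_{\textn{dTr}}^2 c_{\vK}^{-1}\tau h^{-1})^2$ and the argument of the max by $\max(1+C_{\textn{inv}}c_{\vK,\gamma}\tau h^{-2},1+C_{\textn{dTr}}^2 c_{\vK}^{-1}\tau h^{-1})$, delivering~\eqref{condition_numb_noalpha}.
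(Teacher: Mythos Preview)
Your proposal is correct and follows essentially the same approach as the paper: bound $|\mathcal{A}_{\textn{DD}}|_N$ from above via boundedness in $\|\cdot\|_{1,\tau,\star}$, the inverse energy estimate~\eqref{estim_1_0_alpha}, and~\eqref{estimate_ell2}, and bound $|\mathcal{A}_{\textn{DD}}^{-1}|_N$ via the inf-sup stability~\eqref{stability_estimate_alpha} together with $\|\cdot\|_{0,\tau,\star}\leq\|\cdot\|_{1,\tau,\star}$ and~\eqref{estimate_ell2}. The only cosmetic difference is that the paper phrases the inverse bound by picking the near-optimal test vector $W$ from the inf-sup and then using $(\mathcal{A}_{\textn{DD}}V,W)_N\leq|\mathcal{A}_{\textn{DD}}V|_N|W|_N$, whereas you bound the supremum directly through $(F,W)_N\leq|F|_N|W|_N$; the two arguments are equivalent.
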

\begin{proof} % We need to bound $|\mathcal{A}_{\textn{DD}}|_{N}$ and $|\mathcal{A}_{\textn{DD}}|^{-1}_{N}$.
By definition \FIX{}{\eqref{eq:stiffness_matrix}, using \eqref{estim_1_0_alpha} and \eqref{estimate_ell2}, we have} for all $V,W\in \RR^{N}$,
\begin{align*}
 (\mathcal{A}_{\textn{DD}}V,W)_{N}
 % =\mathcal{A}_{\gamma}((\vecu_{\hg}, \lambda_{\hg}),(\vecv_{\hg}, \mu_{\hg})) +s_{\gamma}(\lambda_{\hg},\mu_{\hg})
& \leq  ||(\vecu_{\hg}, \lambda_{\hg})||_{1,\tau,\star}||(\vecv_{\hg}, \mu_{\hg})||_{1,\tau,\star},\\
& \leq \max((1+C_{\textn{inv}} c_{\vK,\gamma}\tau
h^{-2}),(1+\alpha_{\gamma}\tau))||(\vecu_{\hg},
\lambda_{\hg})||_{0,\tau,\star}||(\vecv_{\hg}, \mu_{\hg})||_{0,\tau,\star},\\
& \lesssim \max((1+C_{\textn{inv}} c_{\vK,\gamma}\tau h^{-2}),(1+\alpha_{\gamma}\tau)) h^{d}|V|_{N}|W|_{N},
\end{align*}
Consequently, \FIX{}{$|\mathcal{A}_{\textn{DD}}|_{N}\lesssim \max((1+C_{\textn{inv}} c_{\vK,\gamma}\tau h^{-2}),(1+\alpha_{\gamma}\tau)) h^{d}.$}
\FIX{To estimate $|\mathcal{A}_{\textn{DD}}|^{-1}_{N}$, start
from~\eqref{stability_estimate_alpha} and use~\eqref{estimate_ell2} to get}{On the other hand,}
\begin{align*}
(\mathcal{A}_{\textn{DD}}V,W)_{N}&\geq{\left[6(1+\tau\alpha_{\gamma})^2\right]^{-1}}
 ||(\vecu_{\hg}, \lambda_{\hg})||_{1,\tau,\star}||(\vecv_{\hg}, \mu_{\hg})||_{1,\tau,\star},\\
&\geq {\left[6(1+\tau\alpha_{\gamma})^2\right]^{-1}}||(\vecu_{\hg},
\lambda_{\hg})||_{0,\tau,\star}||(\vecv_{\hg}, \mu_{\hg})||_{0,\tau,\star}
\gtrsim {h^{d}}{\left[6(1+\tau\alpha_{\gamma})^2\right]^{-1}}|V|_{N}|W|_{N},
\end{align*}
hence $|V|_{N}\lesssim 6(1+\tau\alpha_{\gamma})^2h^{-d}|\mathcal{A}_{\textn{DD}}V|_{N}$.
\FIX{Now s}{S}etting $V=\mathcal{A}_{\textn{DD}}^{-1} W$, we \FIX{easily }{}conclude that
$ |\mathcal{A}_{\textn{DD}}^{-1}|_{N}\lesssim 6(1+\tau\alpha_{\gamma})^2 h^{-d}$. Combining
estimates for
$ |\mathcal{A}_{\textn{DD}}^{-1}|_{N}$ and $ |\mathcal{A}_{\textn{DD}}|_{N}$
we get~\eqref{condition_numb_alpha}. The estimate \eqref{condition_numb_noalpha}
\FIX{of the limiting case}{for} $ \alpha_{\gamma} \to \infty $ is obtained similarly by
using~\eqref{estim_1_0_noalpha} and~\eqref{stability_estimate_noalpha}\FIX{ in
the proof}{}.
\end{proof}
\subsection{Convergence of   MoLDD-scheme}
The second step of our analysis is to prove the convergence of
Algorithm~\ref{monolithic_ldd}. The idea is to \FIX{prove}{show} that this algorithm is a
contraction and then  apply the Banach fixed-point theorem~\cite{MR2079503}.
For that purpose, define $\delta_{\vecu,h}^{k}=\vecu^{n,k}_{\hg}-\vecu^{n,k-1}_{\hg}$ and
 $\delta_{\lambda,h}^{k}=\lambda^{n,k}_{\hg}-\lambda^{n,k-1}_{\hg}$ \FIX{be the differences
between the solutions at iteration $k$ and $k-1$ of the
problem~\eqref{lscheme_system_fracture}, respectively}{}.
\begin{thm}[Convergence of MoLDD-scheme]\label{thm:cvlscheme}
    Assuming  that Assumptions~\textn{\Assum} hold true and that
    $L_{\gamma}(\zeta)={L_{\xi}}/{2(1-\zeta)}$, with a parameter
    $\zeta\in[0,1)$,   Algorithm~\ref{monolithic_ldd}  defines  a  contraction
    given by
 \begin{alignat}{4}\label{contraction_lscheme}
&||\delta_{\lambda,h}^{k}||_{\gamma}^{2}+\tau ||\delta_{\lambda,h}^{k}||^{2}_{s,\gamma}+\left(\dfrac{L_{\gamma}}{2}+c_{\vK,\gamma}\right)\tau ||\delta_{\vecu,h}^{k}||_{\gamma}^{2}
\leq \left(\dfrac{L_{\gamma}}{2}-\zeta \xi_{m}\right)\tau ||\delta_{\vecu,h}^{k-1}||_{\gamma}^{2},
\end{alignat}
\FIX{}{and the limit is the unique solution of~\eqref{discrete_system_fracture}.}
\FIX{where  $\zeta$ is chosen to
improve the convergence rate of the scheme. Furthermore,
the limit is the unique solution of~\eqref{discrete_system_fracture}.}{}
\end{thm}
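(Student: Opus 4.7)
The plan is to argue by Banach's fixed-point theorem: show that the mapping defined by one step of \eqref{lscheme_system_fracture} contracts in a suitable norm, so that the iterates $(\vecu^{n,k}_{\hg},\lambda^{n,k}_{\hg})$ converge to a unique fixed point which, by construction, solves \eqref{discrete_system_fracture}. The core is to derive the energy estimate \eqref{contraction_lscheme} for the iteration differences $\delta_{\vecu,h}^k$ and $\delta_{\lambda,h}^k$.

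First, I would write \eqref{lscheme_system_d_f}--\eqref{lscheme_system_c_f} at two consecutive iterations $k$ and $k-1$ and subtract. Using that $\xi(\vecu^{n,k-1}_{\hg})-\xi(\vecu^{n,k-2}_{\hg})$ is the only nonlinear term and that the data $\lambda^{n-1}_{\hg}$, $f_\gamma^n$, $g_\gamma^n$ cancel, the error system is
\begin{align*}
(\eta^{k-1},\vecv)_\gamma+L_\gamma(\delta_{\vecu,h}^k-\delta_{\vecu,h}^{k-1},\vecv)_\gamma
+a_\gamma(\delta_{\vecu,h}^k,\vecv)-b_\gamma(\vecv,\delta_{\lambda,h}^k)&=0,\\
c_\gamma(\delta_{\lambda,h}^k,\mu)+\tau b_\gamma(\delta_{\vecu,h}^k,\mu)+\tau s_\gamma(\delta_{\lambda,h}^k,\mu)&=0,
\end{align*}
where $\eta^{k-1}\defeq\xi(\vecu^{n,k-1}_{\hg})-\xi(\vecu^{n,k-2}_{\hg})$. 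Next I would test the first line with $\vecv=\tau\delta_{\vecu,h}^k$ and the second with $\mu=\delta_{\lambda,h}^k$, and add; the $b_\gamma$ contributions cancel and the positivity of $a_\gamma$ (assumption \ref{ass_K_gamma}) plus the $s_\gamma$ semi-definiteness from Lemma~\ref{lem:bound_on_s} produce the coercive terms $\tau c_{\vK,\gamma}\|\delta_{\vecu,h}^k\|_\gamma^2$, $\|\delta_{\lambda,h}^k\|_\gamma^2$ and $\tau\|\delta_{\lambda,h}^k\|_{s,\gamma}^2$.

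The main obstacle is handling the nonlinear term $\tau(\eta^{k-1},\delta_{\vecu,h}^k)_\gamma$ and matching the exact constants in \eqref{contraction_lscheme}. The trick I would use combines the monotonicity and Lipschitz hypotheses in \ref{ass_xi}: since $\xi_m\leq\xi'\leq L_\xi$, one has $\|\eta^{k-1}\|_\gamma^2\leq L_\xi(\eta^{k-1},\delta_{\vecu,h}^{k-1})_\gamma$ and $(\eta^{k-1},\delta_{\vecu,h}^{k-1})_\gamma\geq \xi_m\|\delta_{\vecu,h}^{k-1}\|_\gamma^2$. Splitting
\[
(\eta^{k-1},\delta_{\vecu,h}^k)_\gamma=(\eta^{k-1},\delta_{\vecu,h}^{k-1})_\gamma+(\eta^{k-1},\delta_{\vecu,h}^k-\delta_{\vecu,h}^{k-1})_\gamma,
\]
and using Young's inequality with weight $L_\gamma$ on the second term together with the above two inequalities yields
\[
\tau(\eta^{k-1},\delta_{\vecu,h}^k)_\gamma\geq\left(1-\tfrac{L_\xi}{2L_\gamma}\right)\tau\xi_m\|\delta_{\vecu,h}^{k-1}\|_\gamma^2-\tfrac{L_\gamma\tau}{2}\|\delta_{\vecu,h}^k-\delta_{\vecu,h}^{k-1}\|_\gamma^2.
\]
The choice $L_\gamma=L_\xi/[2(1-\zeta)]$ turns the first coefficient into $\zeta\xi_m$. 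The key cancellation then arises from the identity $(a-b,a)=\tfrac12(\|a\|^2-\|b\|^2+\|a-b\|^2)$ applied to the $L_\gamma$-stabilization inner product: the $\tfrac{L_\gamma\tau}{2}\|\delta_{\vecu,h}^k-\delta_{\vecu,h}^{k-1}\|_\gamma^2$ contribution cancels with the Young remainder above, leaving exactly \eqref{contraction_lscheme}.

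Finally, since $\tfrac{L_\gamma}{2}-\zeta\xi_m<\tfrac{L_\gamma}{2}+c_{\vK,\gamma}$, estimate \eqref{contraction_lscheme} implies that the mapping $(\vecu^{n,k-1}_{\hg},\lambda^{n,k-1}_{\hg})\mapsto(\vecu^{n,k}_{\hg},\lambda^{n,k}_{\hg})$ is a strict contraction in the $\|\cdot\|_\gamma$-norm on $\mathbf{V}_{\hg}\times M_{\hg}$, which is complete. Banach's fixed-point theorem then gives both convergence and uniqueness of the limit; passing to the limit $k\to\infty$ in \eqref{lscheme_system_fracture} (straightforward since all operators are continuous and $\xi$ is Lipschitz) identifies the limit with the unique solution of \eqref{discrete_system_fracture}.
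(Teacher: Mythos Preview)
Your proposal is correct and follows essentially the same strategy as the paper: subtract consecutive iterates, test with $(\tau\delta_{\vecu,h}^k,\delta_{\lambda,h}^k)$, use the polarization identity on the $L_\gamma$-stabilization term, and combine the monotonicity/Lipschitz bounds on $\xi$ with a Young inequality weighted by $L_\gamma$ so that the cross term $\|\delta_{\vecu,h}^k-\delta_{\vecu,h}^{k-1}\|_\gamma^2$ cancels; the choice $L_\gamma=L_\xi/[2(1-\zeta)]$ is exactly what makes the $\|\eta^{k-1}\|_\gamma^2$ contributions disappear and yields \eqref{contraction_lscheme}.

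Two minor remarks. First, your splitting $(\eta^{k-1},\delta_{\vecu,h}^k)=(\eta^{k-1},\delta_{\vecu,h}^{k-1})+(\eta^{k-1},\delta_{\vecu,h}^k-\delta_{\vecu,h}^{k-1})$ followed by $\|\eta^{k-1}\|_\gamma^2\leq L_\xi(\eta^{k-1},\delta_{\vecu,h}^{k-1})_\gamma$ is algebraically equivalent to the paper's $\zeta/(1-\zeta)$ split of $(\eta^{k-1},\delta_{\vecu,h}^{k-1})_\gamma$; both lead to the same coefficient $\zeta\xi_m$ once $L_\gamma$ is fixed. Second, your final sentence slightly overstates the contraction: \eqref{contraction_lscheme} is a contraction for $\|\delta_{\vecu,h}^k\|_\gamma$ alone, not for the full product norm. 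This is harmless because the iteration map \eqref{lscheme_system_fracture} depends only on $\vecu^{n,k-1}_{\hg}$, so Banach applies to $T:\mathbf{V}_{\hg}\to\mathbf{V}_{\hg}$ and $\lambda^{n,k}_{\hg}$ is recovered from the (well-posed) linear DD system. The paper additionally checks $\|\nabla_\tau\cdot\delta_{\vecu,h}^k\|_\gamma\to 0$ via the mass equation, but in the finite-dimensional setting this is automatic.
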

\begin{proof}
    By subtracting \eqref{lscheme_system_fracture} at step $k$ from the ones at step $k-1$, we obtain
 \bse\label{lscheme_error_fracture}\begin{alignat}{4}
\label{lscheme_error_d_f}&(\xi(\vecu_{h, \gamma}^{n,k-1}) -\xi(\vecu_{h, \gamma}^{n,k-2}),\vecv)_{\gamma}+L_{\gamma}(\delta_{\vecu,h}^{k}-\delta_{\vecu,h}^{k-1},\vecv)_{\gamma} +a_{\gamma}(\delta_{\vecu,h}^{k},\vecv)-
b_{\gamma}(\vecv,\delta_{\lambda,h}^{k})= 0  &&\quad \forall  \vecv\in
\mathbf{V}_{\hg}, &\\
\label{lscheme_error_c_f}&c_{\gamma}(\delta_{\lambda,h}^{k},\mu)+\tau
b_{\gamma}(\delta_{\vecu,h}^{k},\mu)+ \tau
s_{\gamma}(\delta_{\lambda,h}^{k},\mu)=0 &&\quad \forall  \mu\in M_{\hg}.&
\end{alignat}\ese
 \FIX{}{Taking $\vecv=\tau \delta_{\vecu,h}^{k}$ in \eqref{lscheme_error_d_f}, $\mu=\delta_{\lambda,h}^{k}$ in \eqref{lscheme_error_c_f} and summing the equations, we get}
\bse\label{lscheme_error_estimate1}\begin{alignat}{4}
\nonumber&||\delta_{\lambda,h}^{k}||^{2}+\tau ||\delta_{\lambda,h}^{k}||^{2}_{s,\gamma}+\tau a_{\gamma}(\delta_{\vecu,h}^{k},\vecv)+\tau (\xi(\vecu_{\hg}^{n,k-1})-\xi(\vecu_{h, \gamma}^{n, k-2}),\delta_{\vecu,h}^{k})_{\gamma}
+L_{\gamma}\tau (\delta_{\vecu,h}^{k}-\delta_{\vecu,h}^{k-1},\delta_{\vecu,h}^{k})_{\gamma}=0.
\end{alignat}
Following~\cite{storvik2018optimization}, we let $\zeta\in[0,1)$ and split the third term while applying the lower bound of $\vK_{\gamma}^{-1}$,
%\label{lscheme_error_estimate2}
\begin{align*}
||\delta_{\lambda,h}^{k}||_{\gamma}^{2}+\tau ||\delta_{\lambda,h}^{k}&||^{2}_{s,\gamma}+c_{\vK,\gamma}\tau ||\delta_{\vecu,h}^{k}||_{\gamma}^{2}
+\zeta\tau (\xi(\vecu_{\hg}^{n,k-1})-\xi(\vecu_{h, \gamma}^{n, k-2}),\delta_{\vecu,h}^{k-1})_{\gamma} +L_{\gamma}\tau (\delta_{\vecu,h}^{k}-\delta_{\vecu,h}^{k-1},\delta_{\vecu,h}^{k})_{\gamma} \\
& +(1-\zeta)\tau (\xi(\vecu_{\hg}^{n,k-1}) -\xi(\vecu_{h, \gamma}^{n, k-2}),\delta_{\vecu,h}^{k-1})_{\gamma} +\tau (\xi(\vecu_{\hg}^{n,k-1})-\xi(\vecu_{h, \gamma}^{n, k-2}),\delta_{\vecu,h}^{k}-\delta_{\vecu,h}^{k-1})_{\gamma} \leq 0.
\end{align*}
We use the identity \FIX{~\eqref{identity}}{$2(a-b) a=a^{2}-b^{2}+(a-b)^{2}$ for $a,b\in\RR$} together with the monotonicity and Lipschitz continuity of $\xi$ given by~\ref{ass_xi} to get
%\label{lscheme_error_estimate3}
\begin{align*}
||\delta_{\lambda,h}^{k}||_{\gamma}^{2}&+\tau ||\delta_{\lambda,h}^{k}||^{2}_{s,\gamma}+c_{\vK,\gamma}\tau ||\delta_{\vecu,h}^{k}||_{\gamma}^{2}+\zeta \xi_{m}\tau ||\delta_{\vecu,h}^{k-1}||_{\gamma}^{2}
+\dfrac{(1-\zeta)}{L_{\xi}}\tau ||\xi(\vecu_{\hg}^{n,k-1})-\xi(\vecu_{\hg}^{n,k-2})||_{\gamma}^{2}\\
&+\dfrac{L_{\gamma}}{2}\tau ||\delta_{\vecu,h}^{k}||^{2}_{\gamma}+\dfrac{L_{\gamma}}{2}\tau ||\delta_{\vecu,h}^{k}-\delta_{\vecu,h}^{k-1}||_{\gamma}^{2}\leq
\dfrac{L_{\gamma}}{2}\tau ||\delta_{\vecu,h}^{k-1}||_{\gamma}^{2}-\tau (\xi(\vecu_{\hg}^{n,k-1})-\xi(\vecu_{\hg}^{n,k-2}),\delta_{\vecu,h}^{k}-\delta_{\vecu,h}^{k-1})_{\gamma}.
\end{align*}
We apply Young's inequality\FIX{~\eqref{young}}{
$|ab|\leq (2\delta)^{-1} a^2+{2}{\delta^{-1}}b^2$ for
$a,b,\delta\in\RR,\,\delta>0$,} for the last term in the right-hand side to obtain
%\label{lscheme_error_estimate4}
\begin{align*}
||\delta_{\lambda,h}^{k}||_{\gamma}^{2}&+\tau ||\delta_{\lambda,h}^{k}||^{2}_{s,\gamma}+c_{\vK,\gamma}\tau ||\delta_{\vecu,h}^{k}||_{\gamma}^{2}+\zeta
\xi_{m}\tau ||\delta_{\vecu,h}^{k-1}||_{\gamma}^{2}
+\dfrac{(1-\zeta)}{L_{\xi}}\tau
||\xi(\vecu_{\hg}^{n,k-1})-\xi(\vecu_{\hg}^{n,k-2})||_{\gamma}^{2}
+\dfrac{L_{\gamma}}{2}\tau ||\delta_{\vecu,h}^{k}||_{\gamma}^{2}\\
&+\dfrac{L_{\gamma}}{2}\tau ||\delta_{\vecu,h}^{k}-\delta_{\vecu,h}^{k-1}||_{\gamma}^{2}
\leq \dfrac{L_{\gamma}}{2}\tau ||\delta_{\vecu,h}^{k-1}||^{2}_{\gamma}+\dfrac{L_{\gamma}}{2}\tau ||\delta_{\vecu,h}^{k}-\delta_{\vecu,h}^{k-1}||^{2}_{\gamma}+
\dfrac{1}{2L_{\gamma}}\tau ||\xi(\vecu_{\hg}^{n,k-1})-\xi(\vecu_{h, \gamma}^{n,k-2})||_{\gamma}^{2}.
\end{align*}\ese
\FIX{We choose}{By choosing} $L_{\gamma}={L_{\xi}}/{2(1-\zeta)}$, we immediately obtain~\eqref{contraction_lscheme}.
% \label{lscheme_error_estimate5}\begin{alignat}{4}
% &||\delta_{\lambda,h}^{k}||^{2}+||\delta_{\lambda,h}^{k}||^{2}_{s,\gamma}+\left(\dfrac{L_{\gamma}}{2}+c_{\vK,\gamma}\right)||\delta_{\vecu,h}^{k}||^{2}
% \leq \left(\dfrac{L_{\gamma}}{2}-\epsilon \xi_{m}\right)||\delta_{\vecu,h}^{k-1}||^{2}.
% \end{alignat}\ese
% % We integrate over the time interval $(0,t)$, we obtain
% % \label{lscheme_error_estimate6}\begin{alignat}{4}
% % &\dfrac{1}{2}||\delta_{\lambda,h}^{k}||^{2}+\int_{0}^{t}||\delta_{\lambda,h}^{k}||^{2}_{s,\gamma}\dt+\left(\dfrac{L_{\gamma}}{2}+c_{\vK,\gamma}\right)\int_{0}^{t}||\delta_{\vecu,h}^{k}||^{2}
% % \dt\leq \left(\dfrac{L_{\gamma}}{2}-\epsilon \xi_{m}\right)\int_{0}^{t}||\delta_{\vecu,h}^{k-1}||^{2}\dt.
% % \end{alignat}
 The inequality~\eqref{contraction_lscheme} impl\FIX{y}{ies} that the sequence
 $\delta_{\lambda,h}^{n,k}\FIX{}{\rightarrow0}$ \FIX{tends to $0$}{} in
$L^{2}(\gamma)$ and $\delta_{\vecu,h}^{k}\FIX{}{\rightarrow0}$ \FIX{tends to
$0$}{}
in $\mathbf{L}^{2}(\gamma)$.
Now we choose $\mu=\nabla_{\tau}\cdot\delta_{\vecu,h}^{k}$ in
\eqref{lscheme_error_c_f} to obtain
 \begin{alignat*}{2}
 \tau ||\nabla_{\tau}\cdot \delta_{\vecu,h}^{k}||^{2}_{\gamma}
 \!=\!-c_{\gamma}(\delta_{\lambda,h}^{k},\nabla_{\tau}\cdot \delta_{\vecu,h}^{k})
 -\tau s_{\gamma}(\lambda^{n,k}_{\hg},\nabla_{\tau}\cdot
 \delta_{\vecu,h}^{k})%,\\%\label{div_estimate1_well}
% \nonumber&&&\leq
\!\leq\!
 ||\delta_{\lambda,h}^{k}||_{\gamma}||\nabla_{\tau}\cdot\delta_{\vecu,h}^{k}||_{\gamma}+
 \tau \alpha_{\gamma}||\delta_{\lambda,h}^{k}||_{\gamma}||\nabla_{\tau}\cdot\delta_{\vecu,h}^{k}||_{\gamma}.
 \end{alignat*}
 Thus,
\begin{alignat}{2}\label{div_estimate2}
&\tau ||\nabla_{\tau}\cdot\delta_{\vecu,h}^{k}||_{\gamma}\leq (\alpha_{\gamma}\tau +1)||\delta_{\lambda,h}^{k}||_{\gamma}.
\end{alignat}
Hence, by~\eqref{contraction_lscheme}, we have $||\nabla_{\tau}\cdot\delta_{\vecu,h}^{k}||_{\gamma}$ tends  to $0$ in
$L^{2}(\gamma)$. This shows that $\delta_{\vecu,h}^{k}$ tends to $0$
in $\mathbf{H}(\textn{div}_{\tau}, \gamma)$.

\FIXX{}{Since \eqref{contraction_lscheme} defines a contraction, by the Banach
fixed-point theorem we can conclude that the sequence generated by the algorithm converges to the
unique solution of the problem \eqref{discrete_system_fracture}.}
\end{proof}
\begin{cor}[Optimal MoLDD-convergence rate]{If $\xi_{m}>0$, }
the  minimum of the
convergence rate of  Algorithm~\ref{monolithic_ldd} is
reached for  the optimal  parameter
\bse\begin{equation}\label{choice:gamma}
\zeta^{*}=\underset{0<\zeta<1}{\argmin}\rho(\zeta)=1+
\dfrac{L_{\xi}\xi_{m}-\sqrt{(L_{\xi}\xi_{m})^2+4L_{\xi} \xi_{m}^2 c_{\vK,\gamma}+4L_{\xi} \xi_{m}c^{2}_{\vK,\gamma}}}
{4\xi_{m}c_{\vK,\gamma}},
\end{equation}
where $\rho(\zeta)$ is the convergence rate
from~\eqref{contraction_lscheme},
\begin{equation}\label{rate_mono}
  \rho(\zeta)=\dfrac{L_{\gamma}-2 \xi_{m}\zeta}{L_{\gamma}+2c_{\vK,\gamma}}<1.
\end{equation}
Therefore, the optimal stabilization parameter is given by
\begin{equation}\label{opt_stab}
L_{\gamma,\textn{opt}}=\dfrac{L_{\xi}}{2(1-\zeta^{*})}.
\end{equation}\ese
\end{cor}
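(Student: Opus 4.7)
The plan is to deduce the corollary directly from the contraction estimate \eqref{contraction_lscheme} in Theorem~\ref{thm:cvlscheme}. Dividing both sides of \eqref{contraction_lscheme} by the coefficient $(L_\gamma/2 + c_{\vK,\gamma})\tau$ in front of $\|\delta_{\vecu,h}^{k}\|_\gamma^2$ yields, after dropping the non-negative $\|\delta_{\lambda,h}^k\|_\gamma^2$ and $\tau\|\delta_{\lambda,h}^k\|_{s,\gamma}^2$ terms on the left, an estimate of the form $\|\delta_{\vecu,h}^k\|_\gamma^2 \le \rho(\zeta)\,\|\delta_{\vecu,h}^{k-1}\|_\gamma^2$ with $\rho(\zeta)$ given by \eqref{rate_mono}. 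Thus the task reduces to minimizing $\rho(\zeta)$ over $\zeta \in [0,1)$, where $L_\gamma$ is prescribed by the theorem as $L_\gamma(\zeta) = L_\xi/[2(1-\zeta)]$.

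Next, I would substitute $L_\gamma(\zeta) = L_\xi/[2(1-\zeta)]$ into \eqref{rate_mono} to obtain a rational function of the single variable $\zeta$. Writing
\begin{equation*}
\rho(\zeta) = \frac{L_\xi/[2(1-\zeta)] - 2\xi_m \zeta}{L_\xi/[2(1-\zeta)] + 2c_{\vK,\gamma}} = \frac{L_\xi - 4\xi_m\zeta(1-\zeta)}{L_\xi + 4c_{\vK,\gamma}(1-\zeta)},
\end{equation*}
one observes that $\rho$ is smooth on $[0,1)$, satisfies $\rho(0) = L_\xi/(L_\xi + 4c_{\vK,\gamma}) < 1$, and $\rho(\zeta) \to 1$ as $\zeta \to 1^-$. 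I would then compute $\rho'(\zeta)$, set it equal to zero, and clear denominators. The numerator of $\rho'(\zeta)$ is a quadratic polynomial in $\zeta$ (after using the assumption $\xi_m > 0$ so that the linearization is non-degenerate), and its two roots can be computed explicitly by the quadratic formula.

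Finally, a careful sign check is needed to select the root which lies in $[0,1)$ and which corresponds to a minimum rather than a maximum of $\rho$. The expression in \eqref{choice:gamma} should emerge as this admissible root; one reads off $\zeta^{*}$ as
\begin{equation*}
\zeta^{*} = 1 + \frac{L_\xi \xi_m - \sqrt{(L_\xi \xi_m)^2 + 4 L_\xi \xi_m^2 c_{\vK,\gamma} + 4 L_\xi \xi_m c_{\vK,\gamma}^2}}{4\xi_m c_{\vK,\gamma}},
\end{equation*}
and plugging $\zeta^{*}$ into $L_\gamma(\zeta) = L_\xi/[2(1-\zeta)]$ delivers \eqref{opt_stab}. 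The main obstacle is purely algebraic: carrying out the differentiation, simplifying the resulting quadratic, and verifying that the chosen root lies in $[0,1)$ for all admissible values of $L_\xi$, $\xi_m$, $c_{\vK,\gamma} > 0$. The strict inequality $\rho(\zeta^*) < 1$ follows from the fact that $\rho(0) < 1$ and $\zeta^*$ is the location of the minimum of a function that is $<1$ somewhere in $[0,1)$.
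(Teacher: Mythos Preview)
Your proposal is correct and follows essentially the same approach as the paper: substitute $L_\gamma(\zeta)=L_\xi/[2(1-\zeta)]$ into the contraction estimate to obtain $\rho(\zeta)=\dfrac{L_\xi-4\xi_m\zeta(1-\zeta)}{L_\xi+4c_{\vK,\gamma}(1-\zeta)}$, differentiate, solve the resulting quadratic, and select the admissible root. The paper's own proof is in fact slightly terser than yours, omitting the boundary-behavior and root-selection checks that you rightly flag as routine verifications.
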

\begin{proof}
Plugging $L_{\gamma}={L_{\xi}}/{2(1-\zeta)}$ in the  contraction
estimate~\eqref{contraction_lscheme} leads to\FIX{
$||\delta_{\vecu,h}^{k}||_{\gamma}^{2}
\leq \rho(\zeta)||\delta_{\vecu,h}^{k-1}||_{\gamma}^{2}$, where}{}
\begin{equation}\label{rate}
\FIX{}{||\delta_{\vecu,h}^{k}||_{\gamma}^{2}
\leq \rho(\zeta)||\delta_{\vecu,h}^{k-1}||_{\gamma}^{2}\quad\text{where}\quad}
 \rho(\zeta)=\dfrac{L_{\xi}-4(1-\zeta) \xi_{m}\zeta}{L_{\xi}+4(1-\zeta)c_{\vK,\gamma}}<1,
\end{equation}
which clearly can be minimal  when choosing the optimal value of $\zeta$.
To calculate $\zeta^{*}$, we  differentiate~\eqref{rate} with respect to
$\zeta$ and infer
the resulting roots and we find that the  minimum of~\eqref{rate} is obtained
for the optimal choice given by~\eqref{choice:gamma}. Replacing back the resulting value into
$L_{\gamma}(\zeta)$ \FIX{delivers the optimal stabilization parameter}{gives}~\eqref{opt_stab}.
\end{proof}
\begin{lem}[Well-posedness of the mixed-dimensional problem]
 There exists a unique solution to the mixed-dimensional problem~\eqref{md_discrete_system_fracture}.
\end{lem}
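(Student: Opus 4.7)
The plan is to exploit the equivalence between the mixed-dimensional scheme~\eqref{md_discrete_system_fracture} and the reduced interface scheme~\eqref{discrete_system_fracture}, together with the contraction result established in Theorem~\ref{thm:cvlscheme}. We proceed by induction on the time level $n$, taking as induction hypothesis that $(\vecu^{n-1}_{h},p^{n-1}_{h})\in\mathbf{V}_{h}\times M_{h}$ and $(\vecu^{n-1}_{\hg},p^{n-1}_{\hg})\in\mathbf{V}_{\hg}\times M_{\hg}$ are uniquely determined (the base case $n=0$ being given by the initial data through $L^2$-projection).

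For the inductive step, I would first observe that existence and uniqueness of the pair $(\vecu^{n}_{\hg},\lambda^{n}_{\hg})\in\mathbf{V}_{\hg}\times M_{\hg}$ solving the reduced interface problem~\eqref{discrete_system_fracture} is immediate from Theorem~\ref{thm:cvlscheme}: choosing $L_{\gamma}=L_{\xi}/[2(1-\zeta)]$ for any $\zeta\in[0,1)$, Algorithm~\ref{monolithic_ldd} is a contraction in the appropriate norm (see~\eqref{contraction_lscheme} together with~\eqref{div_estimate2}) on a complete space, and by the Banach fixed-point theorem its unique limit solves~\eqref{discrete_system_fracture}. Uniqueness for~\eqref{discrete_system_fracture} follows because any two solutions define a contraction-invariant pair, forcing them to coincide.

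Next, I would recover the subdomain variables through the decomposition~\eqref{SolSub:decomposition}. With $\lambda^{n}_{\hg}$ in hand, the auxiliary problem~\eqref{subdo1_weak_mixed_formulation_disc} is a standard linear mixed saddle-point system whose well-posedness follows from the coercivity of $a_{i}$ (see~\eqref{coer:a_i} under Assumption~\ref{ass_K_omega}) combined with~\eqref{conservation}, which provides the discrete inf–sup condition for $b_{i}$; hence $(\vecu^{*}_{\hi}(\lambda^{n}_{\hg}),p^{*}_{\hi}(\lambda^{n}_{\hg}))$ exists and is unique. Similarly, problem~\eqref{subdo2_weak_mixed_formulation_disc}, which only involves data at time $t^{n-1}$ and the source $f^{n}$, is a standard linear mixed Darcy problem and is uniquely solvable under the same coercivity and inf–sup conditions. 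Setting $(\vecu^{n}_{\hi},p^{n}_{\hi})\eq(\vecu^{*}_{\hi}(\lambda^{n}_{\hg})+\bar{\vecu}^{n}_{\hi},\,p^{*}_{\hi}(\lambda^{n}_{\hg})+\bar{p}^{n}_{\hi})$ and $p^{n}_{\hg}\eq\lambda^{n}_{\hg}$, I would verify by direct substitution that the resulting tuple satisfies~\eqref{md_discrete_system_fracture}.

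The only subtle step — and the place one must be careful — is the equivalence direction from~\eqref{md_discrete_system_fracture} to~\eqref{discrete_system_fracture}: given any solution of the mixed-dimensional scheme, one must show that $p^{n}_{\hg}$ necessarily plays the role of $\lambda^{n}_{\hg}$ in the reduced problem, so that uniqueness transfers back. This follows by reversing the derivation already sketched between Definition~\ref{Def:discrete_scheme} and Definition~\ref{def:reduced scheme}: the linearity of~\eqref{subdo1_weak_mixed_formulation_disc}--\eqref{subdo2_weak_mixed_formulation_disc} together with the uniqueness of each subdomain solve guarantees that the decomposition~\eqref{SolSub:decomposition} is the only one compatible with~\eqref{md_semi_system_d_p}--\eqref{md_semi_system_c_p}, and substituting $\jump{\vecu^{n}_{h}\cdot\vecn}=\jump{\vecu^{*}_{h}(\lambda^{n}_{\hg})\cdot\vecn}+\jump{\bar{\vecu}^{n}_{h}\cdot\vecn}=-s_{\gamma}(\lambda^{n}_{\hg},\cdot)+g^{n}_{\gamma}$ into~\eqref{md_semi_system_c_f} recovers~\eqref{semi_system_c_f}. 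Uniqueness in~\eqref{discrete_system_fracture} thus propagates to uniqueness in~\eqref{md_discrete_system_fracture}, closing the induction.
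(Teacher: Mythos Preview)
Your proposal is correct and follows essentially the same approach as the paper: invoke the equivalence between the mixed-dimensional scheme~\eqref{md_discrete_system_fracture} and the reduced interface scheme~\eqref{discrete_system_fracture}, then use Theorem~\ref{thm:cvlscheme} (Banach fixed-point) to conclude unique solvability of the latter and hence of the former. The paper's own proof is a two-sentence version of exactly this argument; you have simply spelled out the equivalence, the induction on $n$, and the well-posedness of the linear subdomain solves in more detail than the paper deems necessary.
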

\begin{proof}
 Problem~\eqref{discrete_system_fracture}
 is  equivalent to~\eqref{md_discrete_system_fracture}. Since we know from Theorem~\ref{thm:cvlscheme}
that~\eqref{discrete_system_fracture} has a unique
solution, this equivalence implies that~\eqref{md_discrete_system_fracture} is uniquely solvable.
\end{proof}
\FIX{We  continue  with  some  important  remarks  concerning
the results above and the implications to the convergence rate
of MoLDD-scheme.}{}
\FIX{\begin{rem}[Dependence of the convergence rate] \end{rem}}{}
 \FIX{Obviously, t}{T}he rate of convergence~\eqref{rate_mono} depends only on the strength of
 the non-linearity by means of the Lipschitz constant $L_{\xi}$,
 the  lower bound $\xi_{m}$ and the fracture permeability $\vK_{\gamma}$.
 Particularly, the rate is \textbf{independent} of  the fracture-matrix coupling parameter
 $\alpha_{\gamma}$, the mesh size $h$ and the time step  $\tau $. Moreover,
%\end{rem}
\FIX{\begin{rem}[Global convergence]}{}
 the convergence of MoLDD-scheme is global, i.e. independent of the initialization and
 particularly of the inner DD solver\FIX{ (like GMRes)}{}.
 Nevertheless, it is\FIX{obviously}{} beneficial if one starts  MoLDD-scheme iterations with the solution
 of the last time step.
%\end{rem}

\section{Analysis of  ItLDD-scheme}\label{sec:analysis_itldd}
We turn now to the analysis of the iterative LDD-scheme
(Algorithm~\ref{splitting_ldd}). In contrast to MoLDD-scheme, in which two levels
of calculations (Linearization+DD) are necessary to achieve the required
solution, the iterative LDD-scheme treats simultaneously   the non-linearity and DD.  Thus, the next result is to be
understood as the convergence for the combined  Linearization-DD processes. \FIX{}{We again denote
$\delta_{\vecu,h}^{k}\eq\vecu^{n,k}_{\hg}-\vecu^{n}_{\hg}$ and
$\delta_{\lambda,h}^{k}\eq\lambda^{n,k}_{\hg}-\lambda^{n}_{\hg}$.}
%stating the
%differences between the solution of the problem~\eqref{lscheme_system_fracture}
%at iteration $k$ and the solution of the
%problem~\eqref{discrete_system_fracture}.
\begin{thm}[Convergence of  ItLDD-scheme]\label{thm:ldd}
Assuming  that Assumptions~\textn{\Assum} hold true and that
$L_{\gamma,u}(\zeta)={L_{\xi}}/{2(1-\zeta)}$, where
$\zeta$ is a parameter to be optimized  in $[0,1)$, and
$L_{\gamma,p}\geq\alpha_{\gamma}$,
the ItLDD-scheme given by
Algorithm~\ref{splitting_ldd} is linearly convergent.
There holds
\begin{align} \label{lddscheme_contraction}
\left(1+\tau
\dfrac{L_{\gamma,p}}{2}\right)||\delta_{\lambda,h}^{k}||_{\gamma}^{2}\!+\!
\dfrac{\tau
}{2}||\delta_{\lambda,h}^{k}||_{s,\gamma}^{2}&+\left(\dfrac{L_{\gamma,u}}{2}\!+\!
c_{\vK,\gamma}\right)\tau ||\delta_{\vecu,h}^{k}||_{\gamma}^{2}
\!\leq\! \left(\dfrac{L_{\gamma,u}}{2}
-\zeta \xi_{m}\right)\tau ||\delta_{\vecu,h}^{k-1}||_{\gamma}^{2}\!+\!
\tau \dfrac{L_{\gamma,p}}{2}||\delta_{\lambda,h}^{k-1}||_{\gamma}^{2}.
 % &(1-\dfrac{\alpha_{\gamma}}{2}\tau )||\delta_{\lambda,h}^{k}||^{2}+\left(\dfrac{L_{\gamma}}{2}+c_{\vK,\gamma}\right)\tau ||\delta_{\vecu,h}^{k}||^{2}
% \leq \left(\dfrac{L_{\gamma}}{2}-\epsilon \xi_{m}\right)\tau ||\delta_{\vecu,h}^{k-1}||^{2}+ \dfrac{\alpha_{\gamma}}{2}\tau ||\delta_{\lambda,h}^{k-1}||^{2}.
\end{align}
\end{thm}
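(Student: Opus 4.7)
The plan is to proceed analogously to Theorem~\ref{thm:cvlscheme}, but with the reference iterate now being the exact solution $(\vecu^n_{\hg},\lambda^n_{\hg})$ of the reduced scheme~\eqref{discrete_system_fracture} rather than the previous L-scheme iterate. Setting $\delta_{\vecu,h}^k\eq\vecu^{n,k}_{\hg}-\vecu^n_{\hg}$ and $\delta_{\lambda,h}^k\eq\lambda^{n,k}_{\hg}-\lambda^n_{\hg}$, I subtract the reduced scheme~\eqref{discrete_system_fracture} from~\eqref{lddscheme_system_fracture}, test the momentum error equation with $\vecv=\tau\delta_{\vecu,h}^k$ and the mass error equation with $\mu=\delta_{\lambda,h}^k$, and add the two so that the $b_\gamma$ contributions cancel exactly.

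The four non-trivial terms to treat are: (i) the non-linear residual $\tau(\xi(\vecu^{n,k-1}_{\hg})-\xi(\vecu^n_{\hg}),\delta_{\vecu,h}^k)_\gamma$, (ii) the velocity stabiliser $\tau L_{\gamma,u}(\delta_{\vecu,h}^k-\delta_{\vecu,h}^{k-1},\delta_{\vecu,h}^k)_\gamma$, (iii) the pressure stabiliser $\tau L_{\gamma,p}(\delta_{\lambda,h}^k-\delta_{\lambda,h}^{k-1},\delta_{\lambda,h}^k)_\gamma$, and (iv) the lagged coupling $\tau s_\gamma(\delta_{\lambda,h}^{k-1},\delta_{\lambda,h}^k)$. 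Items (i) and (ii) are handled exactly as in the proof of Theorem~\ref{thm:cvlscheme}: I split (i) with a parameter $\zeta\in[0,1)$, invoke monotonicity ($\xi_m$) on the $\zeta$ part, apply the polarization identity on (ii), and close with Young's inequality together with the Lipschitz continuity from~\ref{ass_xi}. The choice $L_{\gamma,u}=L_\xi/[2(1-\zeta)]$ then cancels the Young residual against the L-scheme residual, leaving $(L_{\gamma,u}/2-\zeta\xi_m)\tau\|\delta_{\vecu,h}^{k-1}\|_\gamma^2$ as the sole velocity contribution to the right-hand side.

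For (iii) the polarization identity produces $\frac{\tau L_{\gamma,p}}{2}\bigl(\|\delta_{\lambda,h}^k\|_\gamma^2-\|\delta_{\lambda,h}^{k-1}\|_\gamma^2+\|\delta_{\lambda,h}^k-\delta_{\lambda,h}^{k-1}\|_\gamma^2\bigr)$. Item (iv) is the main obstacle, since it couples the current iterate to the previous one through the non-local Robin-to-Neumann operator. I rewrite $s_\gamma(\delta_{\lambda,h}^{k-1},\delta_{\lambda,h}^k) = \|\delta_{\lambda,h}^k\|_{s,\gamma}^2 - s_\gamma(\delta_{\lambda,h}^k-\delta_{\lambda,h}^{k-1},\delta_{\lambda,h}^k)$, bound the cross term by Cauchy--Schwarz in the $\|\cdot\|_{s,\gamma}$ seminorm followed by Young's inequality, and then convert the seminorm of the increment into an $L^2$ norm through the upper estimate $\|\cdot\|_{s,\gamma}^2\le\alpha_\gamma\|\cdot\|_\gamma^2$ of Lemma~\ref{lem:bound_on_s}. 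This yields $\tau s_\gamma(\delta_{\lambda,h}^{k-1},\delta_{\lambda,h}^k)\ge \frac{\tau}{2}\|\delta_{\lambda,h}^k\|_{s,\gamma}^2-\frac{\tau\alpha_\gamma}{2}\|\delta_{\lambda,h}^k-\delta_{\lambda,h}^{k-1}\|_\gamma^2$.

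Combining the four contributions with $\tau a_\gamma(\delta_{\vecu,h}^k,\delta_{\vecu,h}^k)\ge c_{\vK,\gamma}\tau\|\delta_{\vecu,h}^k\|_\gamma^2$ from~\ref{ass_K_gamma} and the mass term $c_\gamma(\delta_{\lambda,h}^k,\delta_{\lambda,h}^k)=\|\delta_{\lambda,h}^k\|_\gamma^2$, the only potentially harmful quantity on the left is the increment term $\frac{\tau(L_{\gamma,p}-\alpha_\gamma)}{2}\|\delta_{\lambda,h}^k-\delta_{\lambda,h}^{k-1}\|_\gamma^2$, which is non-negative precisely under the hypothesis $L_{\gamma,p}\ge\alpha_\gamma$; this is how the condition appears naturally from the estimate. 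Discarding this non-negative quantity delivers exactly~\eqref{lddscheme_contraction}. Linear convergence follows since the ratio $\tfrac{\tau L_{\gamma,p}/2}{1+\tau L_{\gamma,p}/2}<1$ contracts the $\lambda$-component and the MoLDD-type argument of Theorem~\ref{thm:cvlscheme} provides strict contraction for the velocity component.
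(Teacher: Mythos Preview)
Your proposal is correct and follows essentially the same approach as the paper's proof: subtract~\eqref{discrete_system_fracture} from~\eqref{lddscheme_system_fracture}, test with $(\tau\delta_{\vecu,h}^{k},\delta_{\lambda,h}^{k})$, treat the non-linear and velocity-stabiliser terms exactly as in Theorem~\ref{thm:cvlscheme}, apply the polarization identity to the pressure stabiliser, and rewrite the lagged coupling as $s_{\gamma}(\delta_{\lambda,h}^{k-1},\delta_{\lambda,h}^{k})=\|\delta_{\lambda,h}^{k}\|_{s,\gamma}^{2}-s_{\gamma}(\delta_{\lambda,h}^{k}-\delta_{\lambda,h}^{k-1},\delta_{\lambda,h}^{k})$ followed by Cauchy--Schwarz in $\|\cdot\|_{s,\gamma}$, the bound $\|\cdot\|_{s,\gamma}^{2}\le\alpha_{\gamma}\|\cdot\|_{\gamma}^{2}$ from Lemma~\ref{lem:bound_on_s}, and Young's inequality. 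The condition $L_{\gamma,p}\ge\alpha_{\gamma}$ then absorbs the increment term, exactly as you describe and as the paper does.
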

\begin{proof}
By subtracting~\eqref{lscheme_system_fracture} at the iteration $k$ from~\eqref{discrete_system_fracture}, we obtain
  \bse\label{lddscheme_error_fracture}\begin{alignat}{4}
%\left. \begin{array}{rll}
\label{lddscheme_error_d_f}&(\xi(\vecu_{h, \gamma}^{n,k-1})-\xi(\vecu_{h, \gamma}^{n}),\vecv)_{\gamma}+
L_{\gamma,u}(\delta_{\vecu,h}^{k}-\delta_{\vecu,h}^{k-1},\vecv)_{\gamma} +
a_{\gamma}(\delta_{\vecu,h}^{k},\vecv)-b_{\gamma}(\vecv,\delta_{\lambda,h}^{k})=
0  &&\quad \forall  \vecv\in \mathbf{V}_{\hg},&\\
\label{lddscheme_error_c_f}&c_{\gamma}(\delta_{\lambda,h}^{k},\mu)+\tau L_{\gamma,p}(\delta_{\lambda,h}^{k}-\delta_{\lambda,h}^{k-1},\mu)_{\gamma}
+ \tau s_{\gamma}(\delta_{\lambda,h}^{k-1},\mu)+\tau
b_{\gamma}(\delta_{\vecu,h}^{k},\mu)=0 &&\quad \forall  \mu\in M_{\hg}.&
\end{alignat}\ese
%   \bse\label{lddscheme_error_fracture}\begin{alignat}{4}
% %\left. \begin{array}{rll}
% \label{lddscheme_error_c_f}(\delta_{\lambda,h}^{k},\mu)+L_{1}(\delta_{\lambda,h}^{k}-\delta_{\lambda,h}^{k-1},\mu)+\tau (\nabla_{\tau}\cdot\delta_{\vecu,h}^{k},\mu)+ \tau (\mathcal{S}_{\gamma}(\delta_{\lambda,h}^{k-1}),\mu)&=0, &&\quad \forall  \mu\in M_{\hg},&\\
% \label{lddscheme_error_d_f}(b_{\gamma}(\vecu_{\lambda,h}^{n,k-1})-b_{\gamma}(\vecu_{\lambda,h}^{n}),\vecv)+L_{2}(\delta_{\vecu,h}^{k}-\delta_{\vecu,h}^{k-1},\vecv) +(\vK^{-1}_{\gamma}\delta_{\vecu,h}^{k},\vecv)-(\delta_{\lambda,h}^{k},\nabla_{\tau}\cdot\vecv)&= 0,  &&\quad \forall  \vecv\in \mathbf{V}_{\hg}.&
% \end{alignat}\ese
Taking $\vecv=\tau \delta_{\vecu,h}^{k}$ in \eqref{lddscheme_error_d_f} and $\mu=\delta_{\lambda,h}^{k}$ in \eqref{lddscheme_error_c_f}, and summing up the equations gives
\begin{align}
 ||\delta_{\lambda,h}^{k}||_{\gamma}^{2} &+
 \tau L_{\gamma,p}(\delta_{\lambda,h}^{k}-\delta_{\lambda,h}^{k-1},\delta_{\lambda,h}^{k})_{\gamma}
 +\tau s_{\gamma}(\delta_{\lambda,h}^{k-1},\delta_{\lambda,h}^{k}) \nonumber \\
 &+\tau (\xi(\vecu_{h, \gamma}^{n,k-1})-\xi(\vecu_{h, \gamma}^{n}),\delta_{\vecu,h}^{k})_{\gamma}
\label{s_original}+L_{\gamma,u}\tau (\delta_{\vecu,h}^{k}-\delta_{\vecu,h}^{k-1},\delta_{\vecu,h}^{k})_{\gamma} +\tau a_{\gamma}(\delta_{\vecu,h}^{k},\delta_{\vecu,h}^{k})
=0.
\end{align}
 For any $\zeta\in[0,1)$, this is  equivalent to
\begin{align}
||\delta_{\lambda,h}^{k}||_{\gamma}^{2}&+
 \tau L_{\gamma,p}(\delta_{\lambda,h}^{k}-\delta_{\lambda,h}^{k-1},\delta_{\lambda,h}^{k})_{\gamma}
 +\tau s_{\gamma}(\delta_{\lambda,h}^{k},\delta_{\lambda,h}^{k})+\tau \zeta(\xi(\vecu_{h, \gamma}^{n,k-1})-\xi(\vecu_{h, \gamma}^{n}),\delta_{\vecu,h}^{k-1})_{\gamma} \nonumber\\
&
+\tau (1-\zeta)(\xi(\vecu_{h, \gamma}^{n,k-1})-\xi(\vecu_{h, \gamma}^{n}),\delta_{\vecu,h}^{k-1})_{\gamma}
 +L_{\gamma,u}\tau (\delta_{\vecu,h}^{k}-\delta_{\vecu,h}^{k-1},\delta_{\vecu,h}^{k})_{\gamma} +\tau a_{\gamma}(\delta_{\vecu,h}^{k},\delta_{\vecu,h}^{k}) \nonumber\\
& \quad =-\tau s_{\gamma}(\delta_{\lambda,h}^{k-1}-\delta_{\lambda,h}^{k},\delta_{\lambda,h}^{k})-\tau (\xi(\vecu_{h, \gamma}^{n,k-1})-\xi(\vecu_{h, \gamma}^{n}),\delta_{\vecu,h}^{k}-\delta_{\vecu,h}^{k-1})_{\gamma}.
\end{align}
We apply the lower bound in the
last term of the left-hand side and
then use the
monotonicity and Lipschitz continuity of the
operator $\xi$,  followed by
Cauchy-Schwarz and Young's inequalities
in the second term of the right-hand side, to get
\begin{gather}\label{lscheme_error_estimate44}
\Big( 1 + \tau \dfrac{L_{\gamma,p}}{2}\Big)||\delta_{\lambda,h}^{k}||_{\gamma}^{2}+\tau ||\delta_{\lambda,h}^{k}||_{s,\gamma}^{2}
+\tau \dfrac{L_{\gamma,p}}{2}||\delta_{\lambda,h}^{k}-\delta_{\lambda,h}^{k-1}||_{\gamma}^{2}
+c_{\vK,\gamma}\tau ||\delta_{\vecu,h}^{k}||_{\gamma}^{2}
+\zeta \xi_{m}||\delta_{\vecu,h}^{k-1}||_{\gamma}^{2} +\dfrac{L_{\gamma,u}}{2}\tau ||\delta_{\vecu,h}^{k}||_{\gamma}^{2} \nonumber \\
+\dfrac{(1-\zeta)}{L_{\xi}}\tau ||\xi(\vecu_{\hg}^{n,k-1})-\xi(\vecu_{\hg}^{n})||_{\gamma}^{2}
+\dfrac{L_{\gamma,u}}{2}\tau ||\delta_{\vecu,h}^{k}-\delta_{\vecu,h}^{k-1}||_{\gamma}^{2} \nonumber
\leq \dfrac{L_{\gamma,u}}{2}\tau
||\delta_{\vecu,h}^{k-1}||_{\gamma}^{2} +\dfrac{L_{\gamma,p}}{2}\tau
||\delta_{\lambda,h}^{k-1}||_{\gamma}^{2}\\
+\dfrac{L_{\gamma,u}}{2}\tau ||\delta_{\vecu,h}^{k}-\delta_{\vecu,h}^{k-1}||_{\gamma}^{2}  -\tau s_{\gamma}(\delta_{\lambda,h}^{k-1}-\delta_{\lambda,h}^{k},\delta_{\lambda,h}^{k})
+\dfrac{1}{2L_{\gamma,u}}\tau ||\xi(\vecu_{\hg}^{k-1})-\xi(\vecu^{n}_{\hg})||_{\gamma}^{2},
\end{gather}
\FIX{{Using \eqref{def_inter_oper_disc} and}} the continuity of $s_{\gamma}$ we get
\begin{alignat}{2}
\label{cont_s_gamma}| s_{\gamma}(\delta_{\lambda,h}^{k-1}-\delta_{\lambda,h}^{k},\delta_{\lambda,h}^{k})|&&\leq ||\delta_{\lambda,h}^{k}||_{s,\gamma}||\delta_{\lambda,h}^{k}-\delta_{\lambda,h}^{k-1}||_{s,\gamma}
\leq\alpha_{\gamma}^{1/2}||\delta_{\lambda,h}^{k}||_{s,\gamma}||\delta_{\lambda,h}^{k}-\delta_{\lambda,h}^{k-1}||_{\gamma}.
%&&&\leq \alpha_{\gamma}||\delta_{\lambda,h}^{k-1}-\delta_{\lambda,h}^{k}||_{\gamma}||\delta_{\lambda,h}^{k}||_{\gamma},
\end{alignat}
Applying Young's inequality to~\eqref{cont_s_gamma}, plugging the result
in~\eqref{lscheme_error_estimate44} and
choosing $L_{\gamma,u}={L_{\xi}}/{2(1-\zeta)}$ gives
\begin{align}  \label{lscheme_error_estimate66}
\left(1+\tau \dfrac{L_{\gamma,p}}{2}\right)&||\delta_{\lambda,h}^{k}||_{\gamma}^{2}
+\tau ||\delta_{\lambda,h}^{k}||_{s,\gamma}^{2}
+\tau \dfrac{L_{\gamma,p}}{2}||\delta_{\lambda,h}^{k}-\delta_{\lambda,h}^{k-1}||_{\gamma}^{2}
%\nonumber&\quad+\dfrac{(1-\epsilon)}{L_{\xi}}\tau ||\xi(\vecu_{\hg}^{n,k-1})-\xi(\vecu_{\hg}^{n})||_{\gamma}^{2}
+\left(\dfrac{L_{\gamma,u}}{2}+c_{\vK,\gamma}\right)\tau ||\delta_{\vecu,h}^{k}||_{\gamma}^{2} \nonumber\\
&\leq \left(\dfrac{L_{\gamma,u}}{2}-\zeta \xi_{m}\right)\tau ||\delta_{\vecu,h}^{k-1}||_{\gamma}^{2}+
\tau \dfrac{L_{\gamma,p}}{2} ||\delta_{\lambda,h}^{k-1}||_{\gamma}^{2}
+\tau \dfrac{\alpha_{\gamma}}{2}||\delta_{\lambda,h}^{k-1}-\delta_{\lambda,h}^{k}||_{\gamma}^{2}+\dfrac{\tau }{2}||\delta_{\lambda,h}^{k}||^{2}_{s,\gamma}.
%&\qquad\qquad+\dfrac{1}{2L_{\gamma,u}}\tau ||\xi(\vecu_{\hg}^{k-1})-\xi(\vecu^{n}_{\hg})||_{\gamma}^{2}.
\end{align}
We  let
$L_{\gamma,p}\geq\alpha_{\gamma}$, to obtain the
estimate~\eqref{lddscheme_contraction}
% \label{lscheme_error_estimate5}\begin{alignat}{4}
% &(1+\tau \dfrac{L_{\gamma,p}}{2}+\tau \dfrac{c_{\gamma}}{2})||\delta_{\lambda,h}^{k}||_{\gamma}^{2}+\left(\dfrac{L_{\gamma,u}}{2}+c_{\vK,\gamma}\right)||\delta_{\vecu,h}^{k}||_{\gamma}^{2}
% \leq \left(\dfrac{L_{\gamma,u}}{2}-\epsilon \xi_{m}\right)\tau ||\delta_{\vecu,h}^{k-1}||_{\gamma}^{2}+
% \dfrac{L_{\gamma,p}}{2}\tau ||\delta_{\lambda,h}^{k-1}||_{\gamma}^{2}.
% \end{alignat}
\FIX{which is clearly a  contraction}{}. We \FIX{finally}{}
repeat the same techniques as
in~\eqref{div_estimate2},  to  get that
$||\nabla_{\tau}\cdot\delta_{\vecu,h}^{k}||_{\gamma}$ tends  to $0$ in
$L^{2}(\gamma)$. This \FIX{altogether}{} shows that
$\delta_{\lambda,h}^{n,k}$ tends to 0 in $L^{2}(\gamma)$ and $\delta_{\vecu,h}^{k}$ tends to $0$
in $\mathbf{H}(\textn{div}_{\tau}, \gamma)$.
\end{proof}
\FIX{\begin{rem}[Contraction factor] }{}Our  contraction  estimate  shows
 that  the  strength of the non-linearity and the matrix-fracture (DD) coupling
 controls the  convergence rate.  In practice,
 the contraction factor   is better if we take
 into account the energy norm ${\tau}||\delta_{\lambda,h}^{k}||_{s,\gamma}^{2}/2$ using the
 bound~\eqref{def_inter_oper_disc}. \FIX{As the stabilization term is such that}{Since we assume} $L_{\gamma,p}\geq \alpha_{\gamma}$,
 \FIX{thus, }{}we have to study the robustness of the algorithm when $\alpha_{\gamma}\rightarrow\infty$\FIX{,  corresponding physically to the case of continuous pressure across the fracture}{}.
%\end{rem}
 \begin{lem}[Contraction-robustness]\label{thm:cvlscheme_limit_case}
Assuming continuous pressure across $\gamma$ ($\alpha_{\gamma}\rightarrow\infty$), then
let
$L_{\gamma,u}(\zeta)={L_{\xi}}/{2(1-\zeta)}$ with $\zeta$ to be chosen in
$[0,1)$,  and
$L_{\gamma,p}\geq {C_{\textn{dTr}}^2}/({c_{\vK}}h)$,
the contraction~\eqref{lddscheme_contraction}  holds true for
the ItLDD-scheme in Algorithm~\ref{splitting_ldd}.
\end{lem}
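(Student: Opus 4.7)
The plan is to reuse almost verbatim the proof of Theorem~\ref{thm:ldd}. A quick inspection shows that the coupling parameter $\alpha_{\gamma}$ enters that argument in a single essential place: the continuity estimate~\eqref{cont_s_gamma}, where the upper bound from~\eqref{def_inter_oper_disc} was invoked in the form $\|\mu\|_{s,\gamma}\leq \sqrt{\alpha_{\gamma}}\,\|\mu\|_{\gamma}$. That bound degenerates as $\alpha_{\gamma}\to\infty$, but the alternative estimate~\eqref{def_inter_oper_cont} for the continuous-pressure regime, namely $\|\mu\|_{s,\gamma}^{2}\leq (C_{\textn{dTr}}^{2}/(c_{\vK}h))\,\|\mu\|_{\gamma}^{2}$, provides a finite, $\alpha_{\gamma}$-independent substitute.

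Substituting this bound into the Cauchy--Schwarz step in~\eqref{cont_s_gamma} and following with Young's inequality exactly as before, the right-hand side term $(\tau \alpha_{\gamma}/2)\,\|\delta_{\lambda,h}^{k-1}-\delta_{\lambda,h}^{k}\|_{\gamma}^{2}$ appearing in~\eqref{lscheme_error_estimate66} is replaced by $(\tau C_{\textn{dTr}}^{2}/(2c_{\vK}h))\,\|\delta_{\lambda,h}^{k-1}-\delta_{\lambda,h}^{k}\|_{\gamma}^{2}$. The hypothesis $L_{\gamma,p}\geq C_{\textn{dTr}}^{2}/(c_{\vK}h)$ then lets us absorb this term into the $\tau(L_{\gamma,p}/2)\,\|\delta_{\lambda,h}^{k}-\delta_{\lambda,h}^{k-1}\|_{\gamma}^{2}$ contribution on the left-hand side, mirroring the role played by the condition $L_{\gamma,p}\geq\alpha_{\gamma}$ in the proof of Theorem~\ref{thm:ldd}. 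With the additional choice $L_{\gamma,u}=L_{\xi}/(2(1-\zeta))$, the treatment of the non-linearity is literally unchanged, and the contraction~\eqref{lddscheme_contraction} falls out identically.

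No other step in Theorem~\ref{thm:ldd} uses the finiteness of $\alpha_{\gamma}$: the monotonicity/Lipschitz manipulations rely only on~\textbf{(A1)}, the lower bound $c_{\vK,\gamma}\,\|\delta_{\vecu,h}^{k}\|_{\gamma}^{2}\leq \tau a_{\gamma}(\delta_{\vecu,h}^{k},\delta_{\vecu,h}^{k})$ uses~\textbf{(A3)}, and the concluding divergence estimate is obtained by replaying~\eqref{div_estimate2} (again using~\eqref{def_inter_oper_cont} in place of~\eqref{def_inter_oper_disc}). The only point that actually requires verification is therefore the validity of the replacement bound~\eqref{def_inter_oper_cont}, and this has already been established in the parameter-robustness lemma preceding Theorem~\ref{thm:stability_estimate}. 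Consequently, no new technical obstacle arises; the exercise is essentially a symbol-for-symbol substitution $\alpha_{\gamma}\leadsto C_{\textn{dTr}}^{2}/(c_{\vK}h)$ in the continuity step of the proof of Theorem~\ref{thm:ldd}.
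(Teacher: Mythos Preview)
Your proposal is correct and follows essentially the same route as the paper: both arguments restart from the intermediate estimate~\eqref{lscheme_error_estimate44}, replace the $\alpha_{\gamma}$-based continuity bound~\eqref{cont_s_gamma} by the $h$-dependent bound from~\eqref{def_inter_oper_cont}, apply Young's inequality, and then absorb the resulting term using $L_{\gamma,p}\geq C_{\textn{dTr}}^{2}/(c_{\vK}h)$. Your remark that the concluding divergence estimate would likewise require~\eqref{def_inter_oper_cont} in place of~\eqref{def_inter_oper_disc} is a nice extra observation, though strictly speaking the lemma only asserts the contraction~\eqref{lddscheme_contraction}.
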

\begin{proof}
 Recall the estimate~\eqref{lscheme_error_estimate44}
 which holds true in that case. We then estimate the coupling term
 $| s_{\gamma}(\delta_{\lambda,h}^{k}-\delta_{\lambda,h}^{k-1},\delta_{\lambda,h}^{k})|$  with the help of~\eqref{def_inter_oper_cont},
 \begin{alignat}{2}
| s_{\gamma}(\delta_{\lambda,h}^{k}-\delta_{\lambda,h}^{k-1},\delta_{\lambda,h}^{k})|
\leq ||\delta_{\lambda,h}^{k}||_{s,\gamma}||\delta_{\lambda,h}^{k}-\delta_{\lambda,h}^{k-1}||_{s,\gamma},
% \nonumber&&&\leq C_{\textn{dTr}} h^{-1/2}\sum_{i=1}^{2}
% ||\vecu^{*}_{\hi}(\delta_{\lambda,h}^{k})||_{\Omegai}||\delta_{\lambda,h}^{k}-\delta_{\lambda,h}^{k-1}||_{\gamma},\\
\leq  C_{\textn{dTr}} c_{\vK}^{-1/2}h^{-1/2} ||\delta_{\lambda,h}^{k}||_{s,\gamma}||\delta_{\lambda,h}^{k}-\delta_{\lambda,h}^{k-1}||_{\gamma}.
%&&&\leq   C_{\gamma,s}\dfrac{1}{h}  ||\delta_{\lambda,h}^{k}-\delta_{\lambda,h}^{k-1}||_{\gamma}||\delta_{\lambda,h}^{k}||_{\gamma}
\end{alignat}
We apply  Young's inequality to~\eqref{cont_s_gamma} and replace the result in~\eqref{lscheme_error_estimate44},
while choosing $L_{\gamma}={L_{\xi}}/{2(1-\zeta)}$,
\begin{align}  \label{lscheme_error_estimate99}
\left(1+\tau \dfrac{L_{\gamma,p}}{2}\right)&||\delta_{\lambda,h}^{k}||_{\gamma}^{2}
+\tau ||\delta_{\lambda,h}^{k}||_{s,\gamma}^{2}
+\tau \dfrac{L_{\gamma,p}}{2}||\delta_{\lambda,h}^{k}-\delta_{\lambda,h}^{k-1}||_{\gamma}^{2}
%\nonumber&\quad+\dfrac{(1-\epsilon)}{L_{\xi}}\tau ||\xi(\vecu_{\hg}^{n,k-1})-\xi(\vecu_{\hg}^{n})||_{\gamma}^{2}
+\left(\dfrac{L_{\gamma,u}}{2}+c_{\vK,\gamma}\right)\tau ||\delta_{\vecu,h}^{k}||_{\gamma}^{2} \nonumber\\
&\leq \left(\dfrac{L_{\gamma,u}}{2}-\zeta \xi_{m}\right)\tau ||\delta_{\vecu,h}^{k-1}||_{\gamma}^{2}
+\dfrac{L_{\gamma,p}}{2}\tau ||\delta_{\lambda,h}^{k-1}||_{\gamma}^{2}
+\dfrac{C_{\textn{dTr}}^2}{c_{\vK}}h^{-1}\dfrac{\tau }{2}||\delta_{\lambda,h}^{k-1}-\delta_{\lambda,h}^{k}||_{\gamma}^{2}
+\dfrac{\tau }{2}||\delta_{\lambda,h}^{k}||^{2}_{s,\gamma}.
%&\qquad\qquad+\dfrac{1}{2L_{\gamma,u}}\tau ||\xi(\vecu_{\hg}^{k-1})-\xi(\vecu^{n}_{\hg})||_{\gamma}^{2}.
\end{align}
\FIX{We choose}{Choosing} $L_{\gamma,p}\geq {C_{\textn{dTr}}^2}/({c_{\vK}}h)$
\FIX{we end up with}{gives} the contraction~\eqref{lddscheme_contraction}. The
rest of the proof is as in Theorem~\ref{thm:ldd}.
\end{proof}
\FIX{We complete our analysis of~Algorithm~\ref{splitting_ldd} by giving}{Finally, we give} alternative
convergence results when $(h,{1}/{\alpha_{\gamma}})\rightarrow0$, leading
to extremely large stabilization parameter $L_{\gamma,p}$ which  deteriorates
the convergence rate of ItLDD scheme. These results are important to show
the robustness of the ItLDD scheme for extreme physical or
discretization situations.
\begin{prop}[Alternative convergence results]
 If $L_{\gamma,p}=0$, and $L_{\gamma,u}={L_{\xi}}/{2(1-\zeta)}$ with $\zeta\in[0,1)$,
Algorithm~\ref{splitting_ldd} is convergent under the
constraint on the  time
step
$\tau \leq {1}/{\alpha_{\gamma}}$.
The following estimate for~Algorithm~\ref{splitting_ldd} holds true and defines
a contraction
\begin{alignat}{2}
\label{estimate1_prop}\left(1-\dfrac{\alpha_{\gamma}}{2}\tau \right)||\delta_{\lambda,h}^{k}||_{\gamma}^{2}+
\left(\dfrac{L_{\gamma,u}}{2}+c_{\vK,\gamma}\right)\tau ||\delta_{\vecu,h}^{k}||_{\gamma}^{2}
\leq \left(\dfrac{L_{\gamma,u}}{2}-\zeta \xi_{m}\right)\tau ||\delta_{\vecu,h}^{k-1}||_{\gamma}^{2}+
\dfrac{\alpha_{\gamma}}{2}\tau ||\delta_{\lambda,h}^{k-1}||_{\gamma}^{2}.
\end{alignat}
 Moreover, if $\alpha_{\gamma}\rightarrow\infty$,   Algorithm~\ref{splitting_ldd} is convergent as \FIX{the ratio}{}
${\tau }/{h}\leq {c_{\vK}}/{C_{\textn{dTr}}^{2}}(=\vcentcolon{C^{-1}_{\gamma,s}})$ holds true, and the resulting
estimate is  a contraction given by
\begin{alignat}{2}
\label{estimate2_prop} \left(1-\dfrac{C_{\gamma,s}}{2}\dfrac{\tau }{h}\right)||
\delta_{\lambda,h}^{k}||^{2}+\left(\dfrac{L_{\gamma,u}}{2}+c_{\vK,\gamma}\right)\tau ||\delta_{\vecu,h}^{k}||^{2}
\leq \left(\dfrac{L_{\gamma,u}}{2}-\zeta \xi_{m}\right)\tau ||\delta_{\vecu,h}^{k-1}||^{2}+ \dfrac{C_{\gamma,s}}{2}\dfrac{\tau }{h}||\delta_{\lambda,h}^{k-1}||^{2}.
\end{alignat}
\end{prop}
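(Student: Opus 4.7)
The plan is to mimic the proof of Theorem~\ref{thm:ldd}, specialising to $L_{\gamma,p}=0$ and replacing the stabilisation-based control of the coupling by a direct Young's-type estimate. Starting from the error system~\eqref{lddscheme_error_fracture} with $L_{\gamma,p}=0$, I test \eqref{lddscheme_error_d_f} with $\vecv=\tau\delta_{\vecu,h}^{k}$ and \eqref{lddscheme_error_c_f} with $\mu=\delta_{\lambda,h}^{k}$; the mixed terms $\pm\tau b_{\gamma}(\delta_{\vecu,h}^{k},\delta_{\lambda,h}^{k})$ cancel upon summation, leaving an identity of the same form as \eqref{s_original} but without any $L_{\gamma,p}$ contribution.

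The crucial step is to absorb the resulting term $\tau s_{\gamma}(\delta_{\lambda,h}^{k-1},\delta_{\lambda,h}^{k})$, which was previously dominated by the stabilising $\tfrac{\tau L_{\gamma,p}}{2}\|\delta_{\lambda,h}^{k}-\delta_{\lambda,h}^{k-1}\|_{\gamma}^{2}$. Moving this term to the right-hand side and combining the Cauchy--Schwarz inequality for $s_{\gamma}$ with the continuity $\|\mu\|_{s,\gamma}^{2}\le \alpha_{\gamma}\|\mu\|_{\gamma}^{2}$ (Lemma~\ref{lem:bound_on_s}) and Young's inequality gives
\begin{equation*}
\big|\tau s_{\gamma}(\delta_{\lambda,h}^{k-1},\delta_{\lambda,h}^{k})\big|\le \tau\alpha_{\gamma}\|\delta_{\lambda,h}^{k-1}\|_{\gamma}\|\delta_{\lambda,h}^{k}\|_{\gamma}\le \tfrac{\tau\alpha_{\gamma}}{2}\|\delta_{\lambda,h}^{k-1}\|_{\gamma}^{2}+\tfrac{\tau\alpha_{\gamma}}{2}\|\delta_{\lambda,h}^{k}\|_{\gamma}^{2}.
\end{equation*}
Subtracting $\tfrac{\tau\alpha_{\gamma}}{2}\|\delta_{\lambda,h}^{k}\|_{\gamma}^{2}$ from the left produces precisely the leading coefficient $1-\tfrac{\alpha_{\gamma}}{2}\tau$ in \eqref{estimate1_prop}; requiring $\tau\le 1/\alpha_{\gamma}$ keeps this quantity bounded below by $1/2$, so the estimate is a genuine contraction.

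The $\vecu$-block is handled verbatim as in Theorem~\ref{thm:ldd}: split $\tau(\xi(\vecu^{n,k-1}_{\hg})-\xi(\vecu^{n}_{\hg}),\delta_{\vecu,h}^{k})_{\gamma}$ using the parameter $\zeta\in[0,1)$, combine the monotonicity and Lipschitz bounds from \ref{ass_xi}, apply the identity $2(a-b)a=a^{2}-b^{2}+(a-b)^{2}$ to the $L_{\gamma,u}$-term, and invoke the coercivity $a_{\gamma}(\vecu,\vecu)\ge c_{\vK,\gamma}\|\vecu\|_{\gamma}^{2}$. The canonical choice $L_{\gamma,u}=L_{\xi}/[2(1-\zeta)]$ cancels the term $\tfrac{\tau}{2L_{\gamma,u}}\|\xi(\vecu^{n,k-1}_{\hg})-\xi(\vecu^{n}_{\hg})\|_{\gamma}^{2}$ against the Lipschitz bound, yielding exactly \eqref{estimate1_prop}. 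For the limit case $\alpha_{\gamma}\to\infty$, the same argument applies, except that the continuity estimate for $s_{\gamma}$ is now supplied by \eqref{def_inter_oper_cont}, i.e.\ $\|\mu\|_{s,\gamma}^{2}\le C_{\textn{dTr}}^{2}c_{\vK}^{-1}h^{-1}\|\mu\|_{\gamma}^{2}=C_{\gamma,s}h^{-1}\|\mu\|_{\gamma}^{2}$; replacing $\alpha_{\gamma}$ with $C_{\gamma,s}/h$ throughout leads to \eqref{estimate2_prop} under the CFL-type constraint $\tau/h\le 1/C_{\gamma,s}$.

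The main obstacle is the absence of the $L_{\gamma,p}$-stabilisation, which in Theorem~\ref{thm:ldd} allowed the symmetric splitting $s_{\gamma}(\delta^{k-1}_{\lambda,h},\delta_{\lambda,h}^{k})=\|\delta_{\lambda,h}^{k}\|_{s,\gamma}^{2}-s_{\gamma}(\delta_{\lambda,h}^{k}-\delta_{\lambda,h}^{k-1},\delta_{\lambda,h}^{k})$ to be closed by telescoping. Without that stabilisation, one cannot hide the increment $\|\delta_{\lambda,h}^{k}-\delta_{\lambda,h}^{k-1}\|_{\gamma}^{2}$, so the coupling has to be estimated by the brutal Young inequality above; this forces the coefficient of $\|\delta_{\lambda,h}^{k}\|_{\gamma}^{2}$ on the left to be reduced by $\tfrac{\tau\alpha_{\gamma}}{2}$ (respectively $\tfrac{\tau C_{\gamma,s}}{2h}$), and the time-step restriction is exactly what is needed to retain a positive leading coefficient and thus a contractive iteration.
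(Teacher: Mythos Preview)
Your proposal is correct and follows essentially the same route as the paper: set $L_{\gamma,p}=0$ in the error identity \eqref{s_original}, treat the $\vecu$-block exactly as in Theorem~\ref{thm:ldd} with the $\zeta$-splitting and the choice $L_{\gamma,u}=L_{\xi}/[2(1-\zeta)]$, and then bound the coupling term directly via $|s_{\gamma}(\delta_{\lambda,h}^{k-1},\delta_{\lambda,h}^{k})|\le \alpha_{\gamma}\|\delta_{\lambda,h}^{k-1}\|_{\gamma}\|\delta_{\lambda,h}^{k}\|_{\gamma}$ (resp.\ with $C_{\gamma,s}/h$ from \eqref{def_inter_oper_cont} in the limit case) followed by Young's inequality. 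Your closing remark on why the time-step constraint replaces the $L_{\gamma,p}$-stabilisation is exactly the mechanism at play.
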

\begin{proof}
We let $L_{\gamma,p}=0$ in the estimate~\eqref{s_original} to get
\begin{align*}
||\delta_{\lambda,h}^{k}||_{\gamma}^{2}+\tau (b_{\gamma}(\vecu_{h, \gamma}^{n,k-1})-b_{\gamma}(\vecu_{h, \gamma}^{n}),\delta_{\vecu,h}^{k})_{\gamma}
+L_{\gamma,u}\tau (\delta_{\vecu,h}^{k}-\delta_{\vecu,h}^{k-1},\delta_{\vecu,h}^{k})_{\gamma} +\tau a_{\gamma}(\delta_{\vecu,h}^{k},\delta_{\vecu,h}^{k})
=-\tau s_{\gamma}(\delta_{\lambda,h}^{k-1},\delta_{\lambda,h}^{k}).
\end{align*}
With the same techniques
used to get~\eqref{lscheme_error_estimate44}, we get for
$L_{\gamma,u}={L_{\xi}}/{2(1-\zeta)}$ with $\zeta\in[0,1)$,
 \begin{align}\label{lscheme_error_estimate66_pro}
\left(1+\tau \dfrac{L_{\gamma,p}}{2}\right)||\delta_{\lambda,h}^{k}||_{\gamma}^{2}
\!+\!\tau ||\delta_{\lambda,h}^{k}||_{s,\gamma}^{2}
%\nonumber&\quad+\dfrac{(1-\epsilon)}{L_{\xi}}\tau ||\xi(\vecu_{\hg}^{n,k-1})-\xi(\vecu_{\hg}^{n})||_{\gamma}^{2}
\!+\!\left(\dfrac{L_{\gamma,u}}{2}+c_{\vK,\gamma}\right)\tau
||\delta_{\vecu,h}^{k}||_{\gamma}^{2}
\!\leq \!\left(\dfrac{L_{\gamma,u}}{2}-\zeta \xi_{m}\right)\tau ||\delta_{\vecu,h}^{k-1}||_{\gamma}^{2}
-\tau s_{\gamma}(\delta_{\lambda,h}^{k-1},\delta_{\lambda,h}^{k}).
%&\qquad\qquad+\dfrac{1}{2L_{\gamma,u}}\tau ||\xi(\vecu_{\hg}^{k-1})-\xi(\vecu^{n}_{\hg})||_{\gamma}^{2}.
\end{align}
The coupling term in the
right-hand side is now estimated as $ s_{\gamma}(\delta_{\lambda,h}^{k-1},\delta_{\lambda,h}^{k})|\leq \alpha_{\gamma}||\delta_{\lambda,h}^{k-1}||_{\gamma}||\delta_{\lambda,h}^{k}||_{\gamma} $,
% \begin{alignat}{2}
%\nonumber&| ,
%\end{alignat}
where we used~\eqref{def_inter_oper_disc}. Applying Young inequality
and inserting the result in~\eqref{lscheme_error_estimate66_pro},
we \FIX{infer}{get}~\eqref{estimate1_prop}. \FIX{That of t}{T}he second estimate~\eqref{estimate2_prop},
when $\alpha_{\gamma}\rightarrow\infty$, is \FIX{obtained similarly}{similar} to~\eqref{estimate1_prop},
but with using~\eqref{def_inter_oper_cont} to bound the
coupling term.
\end{proof}
\FIX{\begin{rem}[Time step vs stabilization]}{}
The constraint on \FIX{the ratio}{} ${\tau}/{h}$ is less restrictive than the
constrain on \FIX{the stabilization parameter}{} $L_{\gamma,p}$ in
Lemma~\ref{thm:cvlscheme_limit_case}. \FIX{We also note that}{Moreover,} the constraint\FIX{on the
time step}{} $\tau\leq {1}/{\alpha_{\gamma}}$ may have the same implication on
the convergence rate as taking $L_{\gamma,p}\geq \alpha_{\gamma}$ in
Theorem~\ref{thm:ldd}. In practice,  the choice between the two constraints may
depend on the\FIX{physical}{} situation. All the results show a strong correlation
between the Robin parameter $\alpha_{\gamma}$, \FIX{and }{}the time step $\tau$ (or
${\tau}/{h}$) \FIX{ or}{and} the stabilization parameter $L_{\gamma,p}$.
%\end{rem}

{
\section{\FIX{The LDD Iterations with Multiscale Flux Basis Implementation}{Implementation of LDD schemes using Multiscale Flux Basis}}\label{sec:Mufbi}
%In this section, we propose  an alternative to the matrix-free method (see
%Subsection~\ref{subsec:monolithic}) by forming the inter-dimensional  map
%$\mathcal{S}^{\textn{RtN}}_{\gamma}$ based on the construction of a multiscale
%mortar flux basis (MFB) from~\cite{ahmed2018multiscale,MR3577939}. We recall
%that  the goal of   this paper is to  solve  the reduced scheme of
%Definition~\ref{def:reduced scheme} for: 1)  different  physical parameters and
%various realizations of the $L$-scheme parameters, 2) various PDEs by
%changing the non-linearity $\xi$, and 3) when computing and comparing the two LDD
%solvers (MoLDD \textit{vs} ItLDD).  We also recall  that the dominant computational cost
%in the LDD algorithms comes from the subdomain solves   to evaluate the action
%of $\mathcal{S}_{\gamma}^{\textn{RtN}}$  using Algorithm~\ref{Eval_op} (step
%2(b)). These \FIX{solves}{} are required at each inner and outer  iterations of
%Algorithm~\ref{monolithic_ldd} and each iteration of
%Algorithm~\ref{splitting_ldd} (see Remark~\ref{rem:cost_MoLDD}
%and~\ref{rem:cost_ItLDD} for the overall cost).  Therefore, the computation cost
%of the algorithms may become large since\FIX{, first,}{}
%the LDD solver may require a large number of iterations for complex
%problems\FIX{, and second,}{ and}  we have seen that  the condition
%number~\eqref{condition_numb_alpha}-\eqref{condition_numb_noalpha} of the
%linearized interface problem grows with refining the grids or increasing
%$\alpha_{\gamma}$ (normal permeability) and permeability contrast.
\FIX{}{
Lastly, we propose an implementation\FIX{ of application}{} of the inter-dimensional map
$\mathcal{S}^{\textn{RtN}}_{\gamma}$ based on the construction of a multiscale
mortar flux basis \cite{ahmed2018multiscale,MR3577939}. We want to solve  the reduced scheme in
Definition~\ref{def:reduced scheme} for different  physical and $L$-scheme parameters, for variation of non-Darcy flow problems by
changing the non-linearity $\xi$, and to compare the two LDD
schemes. In Section \ref{sec:algorithms}, we have seen that the dominant computational cost of both Algorithm~\ref{monolithic_ldd} and
Algorithm~\ref{splitting_ldd} comes from the subdomain solves by evaluating the action
of $\mathcal{S}_{\gamma}^{\textn{RtN}}$  using Algorithm~\ref{Eval_op} (step
2(b)). We have also seen that the condition
number~\eqref{condition_numb_alpha}-\eqref{condition_numb_noalpha} of the
linearized interface problem grows with refining the grids or increasing
$\alpha_{\gamma}$. Therefore, in the case of large number of iterations, we want to have an efficient method to evaluate the action
of $\mathcal{S}_{\gamma}^{\textn{RtN}}$}.

The  construction of the inter-dimensional mapping is achieved by  pre-computing and storing the
flux subdomain responses, called \textit{multiscale flux basis} (MFB), associated with each
fracture pressure degree of freedom on each subdomain \cite{MR2557486}.
% \begin{center}
% \begin{tabular}{|c p{.9\linewidth}|}
% \hline
% &The multiscale flux basis requires solving a fixed
% number of linear subdomain solves, and this only  at one  time step,  and permits retrieving  the action of
% $\mathcal{S}^{\textn{RtN}}_{\gamma}$ on $M_{\hg}$,  by simply taking a linear combination of
% multiscale flux basis functions. This is valid \textn{for any outer or inner iteration of the algorithms},  and \textbf{for all   time steps $n\in\{1,2,\cdots,N\}$}.\\
% \hline
% \end{tabular}
% \end{center}
\FIX{Define}{Let}
$(\Phi^{\ell}_{\hg})^{\mathcal{N}_{\hg}}_{\ell=1}$ be a set of basis
functions on\FIX{the  interface pressure space }{} $M_{\hg}$, where $\mathcal{N}_{\hg}$
is \FIX{the number of pressure degrees of freedom on $\gamma$}{the dimension of $M_{\hg}$}. \FIX{}{Then, each function $ \lambda_{\hg} \in M_{\hg} $
can be represented as $
 \lambda_{\hg} = \sum_{\ell=1}^{\mathcal{N}_{\hg}} \lambda^{\ell}_{\hg}\Phi^{\ell}_{\hg}$.}
\FIX{and compute  the   }{}The MFB functions corresponding to
$(\Phi^{\ell}_{\hg})^{\mathcal{N}_{\hg}}_{\ell=1}$ \FIX{using the following
algorithm:}{ are computed as follows.}
\begin{algo}[Assembly of the multiscale flux basis]\label{Eval_MS}~
{
\setlist[enumerate]{topsep=0pt,itemsep=-1ex,partopsep=1ex,parsep=1ex,leftmargin=1.5\parindent,font=\upshape}
\begin{enumerate}
    \item Enter the basis $(\Phi^{\ell}_{\hg})^{\mathcal{N}_{\hg}}_{\ell=1}$. Set $\ell\eq0$.
    \item \textn{\textbf{Do}}
    \begin{enumerate}
        \item Increase $\ell\eq \ell +1$.
        \item Project $\Phi^{\ell}_{\hg}$ on the  subdomain boundary,
            $\lambda_{h,i}^{\ell} =\mathcal{Q}_{h,i}(\Phi^{\ell}_{\hg})$.
        \item Solve  problem~\eqref{subdo1_weak_mixed_formulation_disc} in each subdomain $\Omegai$.
        \item Project the resulting flux onto the pressure space on the
            fracture,
            $ \Psi_{\hg,i}^{\ell} \eq - \mathcal{Q}^{\textn{T}}_{h,i} \vecu^{*}_{\hi}(\lambda_{h,i}^{\ell})\cdot\vecn_{i}. $
    \end{enumerate}
    \textn{\textbf{While} $\ell\leq \mathcal{N}_{\hg}$.}
    % \item Form  the  multiscale  flux  basis  for
    %     subdomain $\Omegai$, i.e., $\left\{\Psi_{\hg,i}^{1},\Psi_{\hg,i}^{2},\cdots, \Psi_{\hg,i}^{\mathcal{N}_{\hg}}\right\}\subset M_{\hg}.$
\end{enumerate}
}
\end{algo}
\FIX{Once the \FIX{multiscale flux basis functions}{MFB} is constructed for each subdomain,
the action of
$\mathcal{S}^{\textn{RtN}}_{\gamma}$  is replaced by a linear combination of the multiscale flux basis functions $\Psi_{\hg,i}^{\ell}$. Specifically, at any time step $n\geq 1$, and at any iteration $m\geq 1$ of any of the algorithms,  for an interface datum $\lambda^{n,m}_{\hg}\in M_{\hg}$, we have $\lambda^{n,m}_{\hg}=\sum_{\ell=1}^{\mathcal{N}_{\hg}} \lambda^{n,m,\ell}_{\hg}\Phi^{\ell}_{\hg}$, and for $i\in\{1,2\}$,}{}
\FIX{}{Hence, the action of $\mathcal{S}^{\textn{RtN}}_{\gamma}$ is given by
\begin{equation}
\label{S_g_action}
    \mathcal{S}^{\textn{RtN}}_{\gamma}(\lambda_{\hg}) = \sum_{i=1}^2 \mathcal{S}^{\textn{RtN}}_{\gamma,i}(\lambda_{\hg}) = \sum_{i=1}^2 \sum_{\ell=1}^{\mathcal{N}_{\hg}} \lambda^{\ell}_{\hg}\mathcal{S}^{\textn{RtN}}_{\gamma,i}(\Phi^{\ell}_{\hg}) = \sum_{i=1}^{2} \sum_{\ell=1}^{\mathcal{N}_{\hg}} \lambda^{\ell}_{\hg}\Psi^{\ell}_{\hg,i}.
\end{equation}
This holds true at any time step and at any iteration of MoLDD or
ItLDD. The use of MFB removes the dependence between the total number
of subdomain solves and the number of their iterations.}
%We then  compute the jump across the fracture
%\begin{align}
%    \mathcal{S}^{\textn{RtN}}_{\gamma}(\lambda^{n,m}_{\hg})&= \sum_{i\in\{1,2\}} \mathcal{S}^{\textn{RtN}}_{\gamma,i}(\lambda^{n,m}_{\hg}).
%\end{align}
\FIX{We continue with some important remarks on the applicability of the MFB.}{}

\FIX{\begin{rem}[On the MFB - gain]}{}
%Note that~\eqref{S_g_action} permits now retrieving  the action of
%$\mathcal{S}^{\textn{RtN}}_{\gamma}$ on $M_{\hg}$, for \textbf{any outer or
%inner iteration of the LDD solvers},  and \textbf{for all   time steps
%$n\in\{1,2,\cdots,N\}$}.
% \end{rem}

\FIX{\begin{rem}[On the MFB - cost]}{}
%For large scale simulations,  the multiscale (Robin-to-Neumann) functions are
%stored on the subdomain level then on different processors, so that the
%inter-dimensional mapping $\mathcal{S}^{\textn{RtN}}_{\gamma}$  need not be
%assembled. As detailed in~Subsection~\ref{subsec:monolithic}, a Krylov method is
%used to solve for the GMRes update in~\eqref{gmres_pseudocode}
%(for~\eqref{lddscheme_system_fracture} for the ItLDD solver) which requires only
%the action of the Robin-to-Neumann operator on each Krylov vector. We have
%explained through the paper that,  with constructing the operator
%$\mathcal{S}^{\textn{RtN}}_{\gamma}$ , the cost in solving the problem is much
%smaller than the cost of solving subdomain problems~\cite{ahmed2018multiscale},
%where the only issue being the  storage capacity. Precisely,   the cost of the
%MFB is  associated with constructing the $\mathcal{S}^{\textn{RtN}}_{\gamma}$
%directly depends on the number of degrees of freedom on the fractures. Thus,
%the MFB framework is favourable for 1) highly heterogeneous parts of the media
%where subdomain solves are affected by heterogeneities, 2) those fractures
%affected by strong non-linearities, and  3) lower permeable or blocking
%fractures where a coarse mortar space can be used without sacrificing accuracy.
%Otherwise, a robust preconditioner~\cite{Ana2019precond,Antonietti2019} can be used in the
%Krylov method, as well as a coarse mortar space that is compensated by taking
%higher order mortars~\cite{MR3869664,MR2306414}.
%\end{rem}
%}
\begin{rem}[Fracture network]
%
%We observe that  each fracture pressure basis function
% $\Phi^{\ell}_{\hg}$ on the fracture interface corresponds to exactly two
% different multiscale flux basis functions, one for $\Omega_{1}$ and one for
% $\Omega_{2}$.
For the case of a fracture network, say $\gamma=\cup_{i\neq j}
\gamma_{ij}$, where $\gamma_{ij}$ is the fracture between the subdomain
$\Omega_{i}$ and $\Omega_{j}$, the previous basis reconstruction is then
applied independently on each fracture.
\end{rem}
\FIX{}{
\begin{rem}[Inner solver]
 Our numerical examples have a single one-dimensional fracture and, thus, we only use the direct methods to solve the interface system. For large-scale problems with many fractures or in three spatial dimensions, we emphasize the need for an iterative solver, such as GMRes. However, we see from Theorem \ref{thm:cond_num} that the condition number of the DD system depends on $ h $ and $ \alpha_{\gamma} $, which in turn influences the number of iterations of the iterative solver. To retain robustness, we can use a preconditioner~\cite{Ana2019precond,Antonietti2019} or a coarse mortar space that is compensated by taking higher order mortars~\cite{MR3869664,MR2306414}.
\end{rem}}

\section{Numerical examples}\label{sec:examples}
{In this section, we present several test cases to show how the schemes behave  (1) for different  values for numerical and
physical parameters (2) with coarsening/refining mortar grids (3) on extensions to other governing
equations.}  We subsequently study the value of $ L_{\gamma,
opt} $ in the MoLDD scheme and the relationship between $ L_{\gamma, u} $ and $
L_{\gamma, p} $ in the ItLDD scheme. The performance of schemes is measured in the
overall number of iterations needed for each scheme to reach the stopping
criteria. In the implementation of both schemes, we consider that the solution
has converged if the relative error of the fracture solution is less than $
10^{-5} $, if the value at the previous iteration step \FIX{is not zero}{is not below machine precision}. Otherwise we
use the absolute error. \FIX{}{We use a direct method (LU decomposition) to solve the linearized interface problem since the size of the system is relatively small.}
%, while the last case in Subsection \ref{subsec:example3}
%discusses the Power Law model.

To keep the presentation simple, we consider the domain and several parameters in common in
all the examples in relation to the first test case in \cite{Martin2005}. The domain $\Omega \eq (0, 2) \times (0, 1)$
is intersected with a fracture defined as $\gamma \eq \{x = 1\}$.
On the boundaries of the rock matrix $\{x = 0\}$ and $\{x =2\}$ we
impose pressure boundary condition with values $0$ and $1$, respectively. We set zero flux boundary condition on the
rest of $\partial \Omega$. The boundary of the fracture at the tips $\{y=1\}\cap\partial\gamma$ and $\{y=0\}\cap\partial\gamma$ inherits the pressure boundary conditions from the rock matrix. The examples are set on the time interval $ I = (0, 1) $ with homogeneous pressure initial condition. As for the physical parameters, we take the
permeability matrix for the bulk $\vK_{i} = \mathbf{I}$, while the source terms $f_{i}$
and $f_\gamma$ are equal to zero.
%\begin{figure}[htbp]
%    \centering
%    \resizebox{0.33\textwidth}{!}{%
%    \fontsize{3pt}{4}\selectfont%
%    }
%    \caption{Geometry of the test cases in Section \ref{sec:examples}.}%
%    \label{fig:geometry}
%\end{figure}

% In the following, we study the performances of MoLDD and ItLDD schemes by
% varying the time step $ \tau $, the mesh size $ h $, and the parameters of the
% considered model.
%\begin{rem}[On the inner DD solver]
%    Previously we have mentioned the computational cost of both methods in the
%    context of an iterative Krylov solver. However, the following examples are
%    reduced to a one-dimensional fracture problem with negligible number of
%    degrees of freedom (DOF) for such a solver to perform efficiently, Therefore, we
%    only use the direct methods to solve the interface problem and rather
%    demonstrate the robustness of our methods with regards to discretization,
%    temporal, physical and L-scheme parameters. We still emphasize the need for
%    a Krylov solver, such as GMRes, for large-scale problems.
%\end{rem}
First, we consider the Forchheimer flow model
where the non linear term is
%\begin{gather*}
$\xi(\vecu_\gamma) = \beta \vert{\vecu_\gamma}\vert\vecu_\gamma$.
%\end{gather*}
{The parameter $ \beta $ is a fluid dependent non-negative scalar known as the Forchheimer
coefficient, and $\vert\cdot\vert$ denotes the Euclidean vector norm $\vert{\vecu_\gamma}\vert^{2}=\vecu_{\gamma}\cdot\vecu_{\gamma}$.} It is straightforward to see that $ \xi $ is a simply increasing function and satisfies condition \ref{ass_xi}. For more details see \cite{MR2425154,MR3264361} and references therein.
\subsection{Stability with respect to  the user-given parameters}\label{subsec:stability}
We first study the performances of MoLDD and ItLDD solvers by
 varying the time step $ \tau $, the mesh size $ h $, and the $L$-scheme parameters $(L_{\gamma, u},L_{\gamma, p})$.  We  let  $\vK_{\gamma} = 1$, $\alpha_\gamma = 10^4$ and  $ \beta = 1 $ and according to the theoretical results,  the
  L-scheme parameters  are given by $ L_{\gamma, u} \approx 1 $ and  $ L_{\gamma, p} = 10^3 $. Results in Table \ref{tab:example1} report the number of
iterations required by  the two LDD solvers while varying the mesh size $ h $ and time step size $ \tau $. Each column of the tables represent results for a time step $ n$.
\begin{table}[tbp]
    \centering
    \begin{tabularx}{0.293\textwidth}{l|r|r|r|r|r}
        \hline
        ${h\backslash n}$ & 1 & 2 & 3 & 4 & 5 \\ \hline
        $2^{-1}$ &17&12&11&10&9 \\
        $2^{-3}$ &17&11&10&9&8 \\
        $2^{-5}$ &17&11&10&9&8 \\ \hline
    \end{tabularx}
    \begin{tabularx}{0.679\textwidth}{l|l|l|l|l|l|l|l|l|l|l|l|l|l|l|l|l}
        \hline
        ${\tau}$ & \multicolumn{16}{c}{}\\ \hline
        $2^{-2}$   &\multicolumn{4}{c|}{17}&\multicolumn{4}{c|}{11}&
                    \multicolumn{4}{c|}{10}&\multicolumn{4}{c}{9}\\ \hline
        $2^{-3}$   &\multicolumn{2}{c|}{17}&\multicolumn{2}{c|}{10}&
                    \multicolumn{2}{c|}{9}&\multicolumn{2}{c|}{9}&
                    \multicolumn{2}{c|}{8}&\multicolumn{2}{c|}{8}&
                    \multicolumn{2}{c|}{7}&\multicolumn{2}{c}{6} \\ \hline
        $2^{-4}$   &16&10&9&9&9&8&8&8&7&7&7&6&6&6&5&5 \\ \hline
    \end{tabularx}
    \vskip 5mm
    \begin{tabularx}{0.293\textwidth}{l|r|r|r|r|r}
        \hline
        ${h\backslash n}$ & 1 & 2 & 3 & 4 & 5 \\\hline
        $2^{-1}$ & 17 & $\,$ 8 & $\,$ 7 & $\,$ 7 & 6 \\
        $2^{-3}$ & 17 & $\,$ 9 & $\,$ 8 & $\,$ 7 & 7 \\
        $2^{-5}$ & 17 & $\,$ 9 & $\,$ 8 & $\,$ 7 & 7 \\ \hline
    \end{tabularx}
    \begin{tabularx}{0.679\textwidth}{l|l|l|l|l|l|l|l|l|l|l|l|l|l|l|l|l}
        \hline
        ${\tau}$ & \multicolumn{16}{c}{}\\\hline
        $2^{-2}$   &\multicolumn{4}{c|}{17}&\multicolumn{4}{c|}{9}&
                    \multicolumn{4}{c|}{8}&\multicolumn{4}{c}{7}\\ \hline
        $2^{-3}$   &\multicolumn{2}{c|}{17}&\multicolumn{2}{c|}{10}&
                    \multicolumn{2}{c|}{9}&\multicolumn{2}{c|}{8}&
                    \multicolumn{2}{c|}{8}&\multicolumn{2}{c|}{7}&
                    \multicolumn{2}{c|}{6}&\multicolumn{2}{c}{6} \\ \hline
        $2^{-4}$   &16&10&9&9&9&8&8&8&7&7&7&6&6&6&5&5 \\ \hline
    \end{tabularx}
    \caption{Results for the example of Subsection \ref{subsec:stability}. Top two tables correspond to solving with the MoLDD scheme, while bottom two correspond to solving with the ItLDD scheme.
        On the left we report the number of iterations by varying the mesh size
        $h$ for a fixed time step $\tau=2^{-4}$, while on the right depending on the time step size $ \tau $ for a fixed mesh size $h=2^{-5}$.}%
    \label{tab:example1}
\end{table}

Regardless of the choice of scheme, we can observe that the number of
iterations is independent from the mesh size and slightly dependent on the time
step size. The reason for the latter might be related to the fact that we
consider the solution at previous iteration as the initial guess for the next iteration. Thus, by decreasing the time step size, the variation of the solution
between steps varies less and so the number of iterations.  Overall, the sequential
 ItLDD and the monolithic   MoLDD solvers behave  similarly; one can also see a slightly better results  for the  iterative   solver in Table~\ref{tab:example1} (left).  Note that   any
comparison of the two solvers does not make sense for the simple reason
that   the amounts of stabilization fixed by  $L_{\gamma,p}$ and $L_{\gamma,u}$
are  not   yet optimal. Another explanation\FIX{, may also be,}{ is that} the
amount of stabilization in the monolithic solver MoLDD is set solely by
$L_{\gamma,u}$, in contrast to the iterative solver ItLDD where \FIX{two
stabilization parameters}{} $L_{\gamma,p}$ and $L_{\gamma,u}$  are used.

Finally, we recall that  with the use of the \FIX{
multiscale flux basis}{MFB}, the  computational  costs of the two solvers is practically
the same.
%In other words, any computational overhead of any of the solvers is free from any  additional costs.
The main  cost is done  offline\FIX{ using the
multiscale flux basis }{}, which is mostly related to the number of mortar degrees of freedom. As an example, the computational cost needed  to draw the
results for $h=2^{-5}$ of the two right tables in
Table~\ref{tab:example1} is approximately equal to 96 subdomain solves $(\textn{Num. of
DOF} * \textn{Num. of subdo.} + 2 N)$, where two  solves per time step are  required  to form the right-hand side
in~\eqref{lscheme_system_fracture} (for MoLDD)
and~\eqref{lddscheme_system_fracture} (for ItLDD). Without MFB,
the cost should be
$\sum_{n=1}^{N} \sum_{k=1}^{N^{n}_{\textn{Lin}}}N_{\textn{dd}}^{k} +2N$, where $N^{n}_{\textn{Lin}}$ is the number of iterations of the $L$-scheme, and $N_{\textn{dd}}^{k}$ denotes the number of DD iterations \FIX{ (GMRes or any Krylov solver)}{}. Thus, if we assume a fixed $N_{\textn{dd}}^{k}$ along all the simulation, say $N_{\textn{dd}}^{k}=2$, this number will be at least 1012 subdomain solves, so that  with MFB we make a save  of approximately $91\%$ of the total subdomain solves.}

In Figure~\ref{fig:example1_mono}, we plot  the number of iterations
\FIX{with various realizations}{} of the  user-given $ L_{\gamma, u} $ in MoLDD solver. We consider 100 values of $ L_{\gamma, u} $, from $ 0 $ to $ 2.5 $ with uniform step $ 0.025
$. The other parameters are fixed as follows,  $ \beta = 1$,  $ h = 0.125 $ and
$ \tau = 0.2$. The graph in this figure behaves very similarly to what is
usually observed for the $L$-type schemes (a typical V-shape graph),
highlighting  a numerically optimal value  $ L_{\gamma, opt} $ between $ 0.5 $
and $ 1 $. \FIX{By increasing $L_{\gamma, u}$, the number of iterations slowly
increases, while they increase more drastically for small value of $ L_{\gamma,
u} $. }{The number of iterations increases for small and large values of $ L_{\gamma,
u} $}. This behavior is common for all time steps. We expect such a behavior
when choosing $ L_{\gamma, u} $ close to zero because it directly influences the
contraction factor in \eqref{contraction_lscheme}.
We can also see
that the identified parameter $ L_{\gamma, opt}\approx 1 $ is close to the
optimal one. % what is the optimal one here? we don't mention it.
\begin{figure}[tbp]
    \centering
    \subfloat[\FIX{}{MoLDD scheme}]{
    \includegraphics[width=0.33\textwidth]{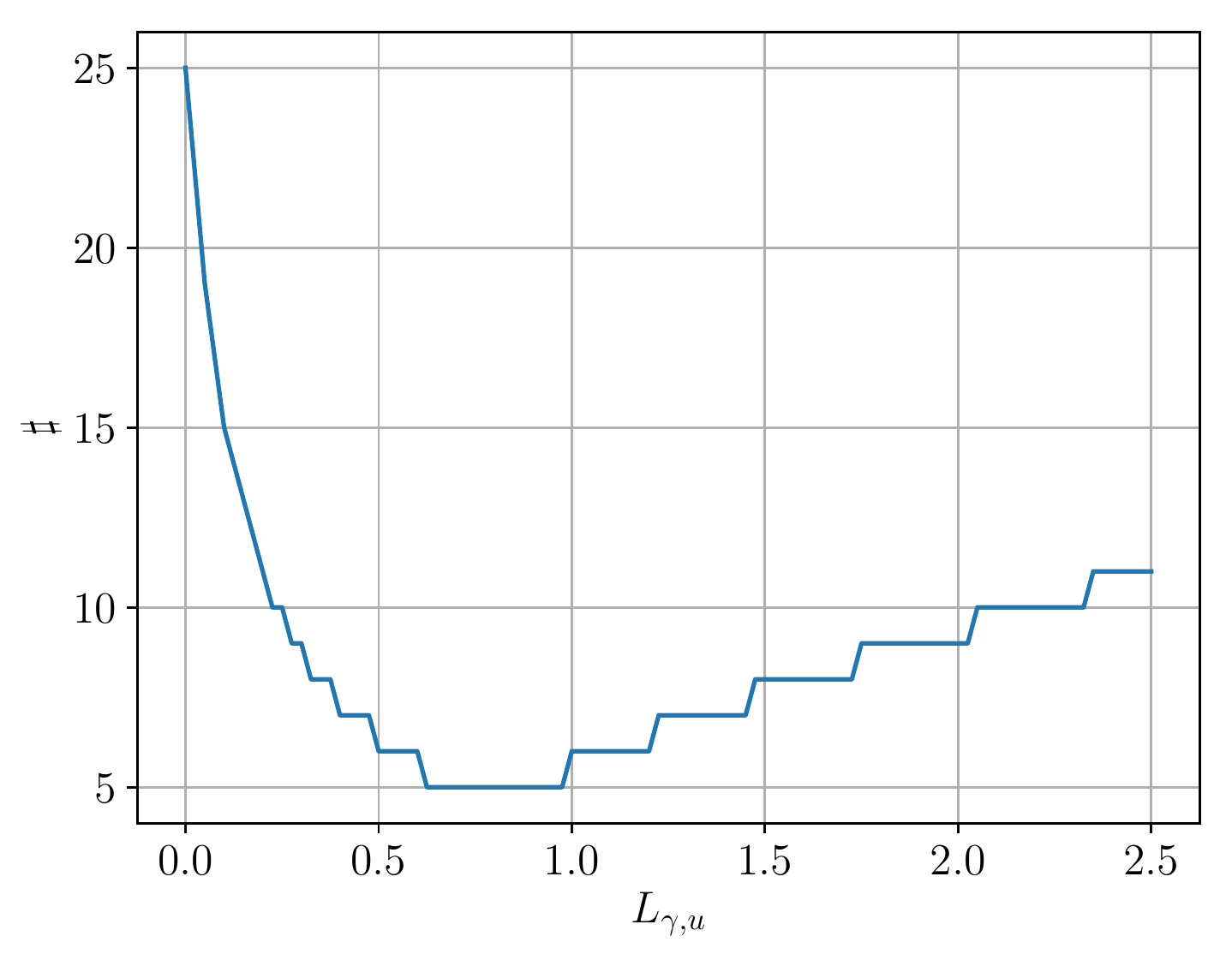}\label{fig:example1_mono}}%
    \hspace{0.1\textwidth}%
    \subfloat[\FIX{}{ItLDD scheme}]{\includegraphics[width=0.33\textwidth]{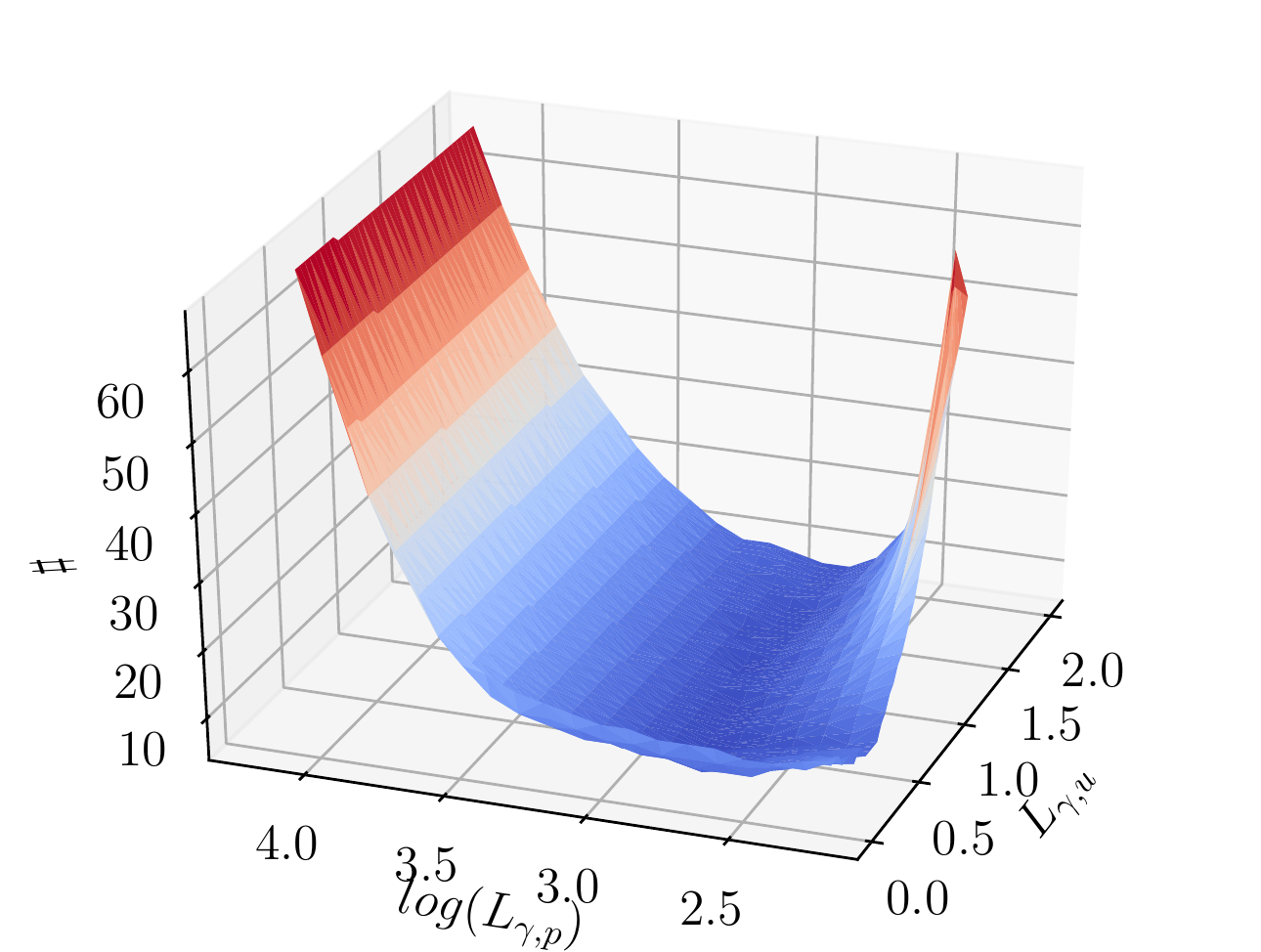}\label{fig:example1_iter}}%
    \caption{Results for the example of Subsection \ref{subsec:stability}\FIX{
        using MoLDD scheme}{}.
        We report the number of iterations $\sharp$ for different values of
        $L_{\gamma,u}$ \FIX{}{ and $ L_{\gamma,p} $}.
        \FIX{On the left for the first time step, in the centre for the third, and on
        the right for the last time step.}{In both cases we report the third
        time step.}}%
    %\label{fig:example1_mono}
\end{figure}
\FIX{
\begin{figure}[tbp]
    \centering
    \includegraphics[width=0.33\textwidth]{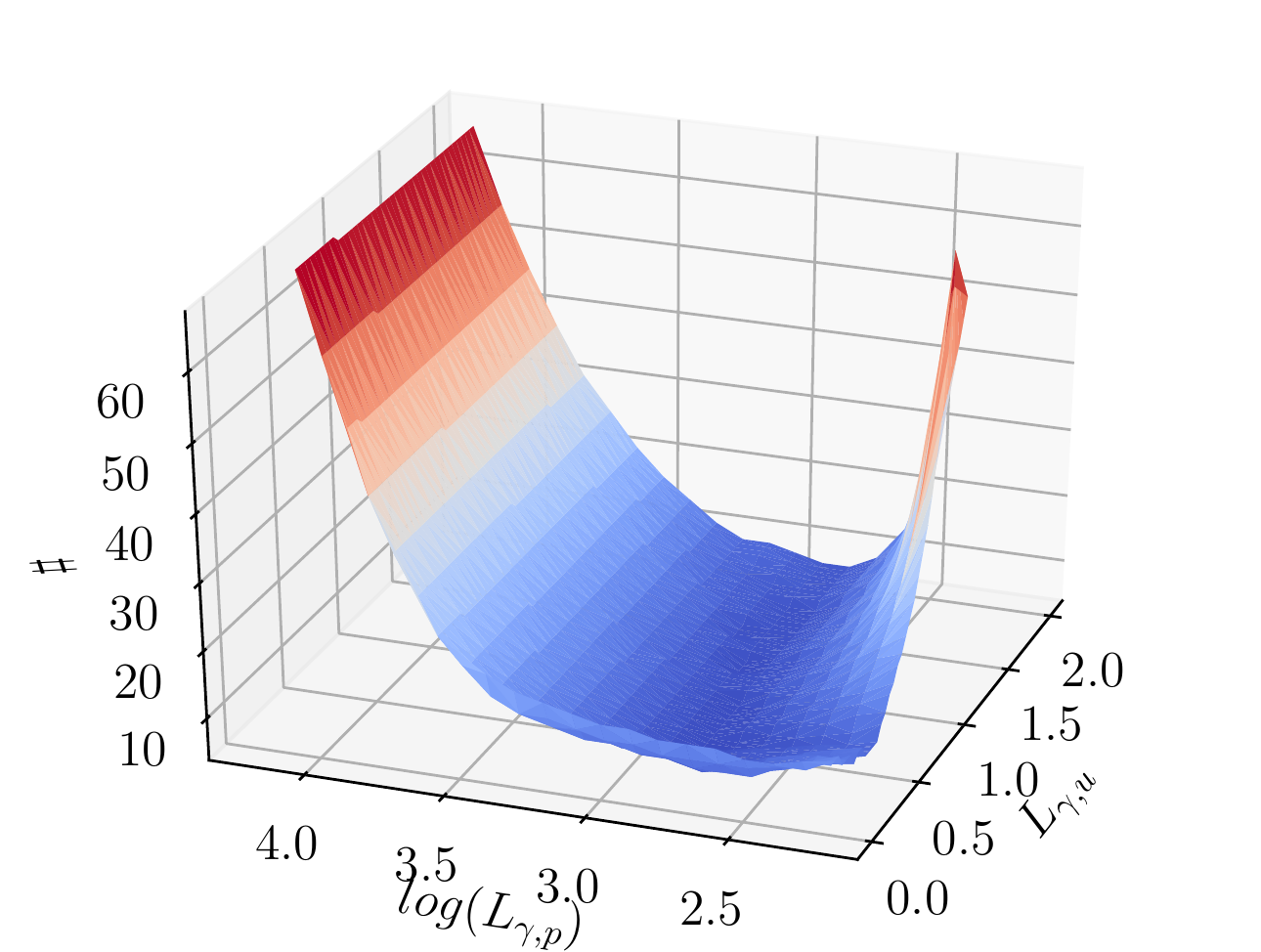}%
    \includegraphics[width=0.33\textwidth]{forchheimer_L_Lp_3}%
    \includegraphics[width=0.33\textwidth]{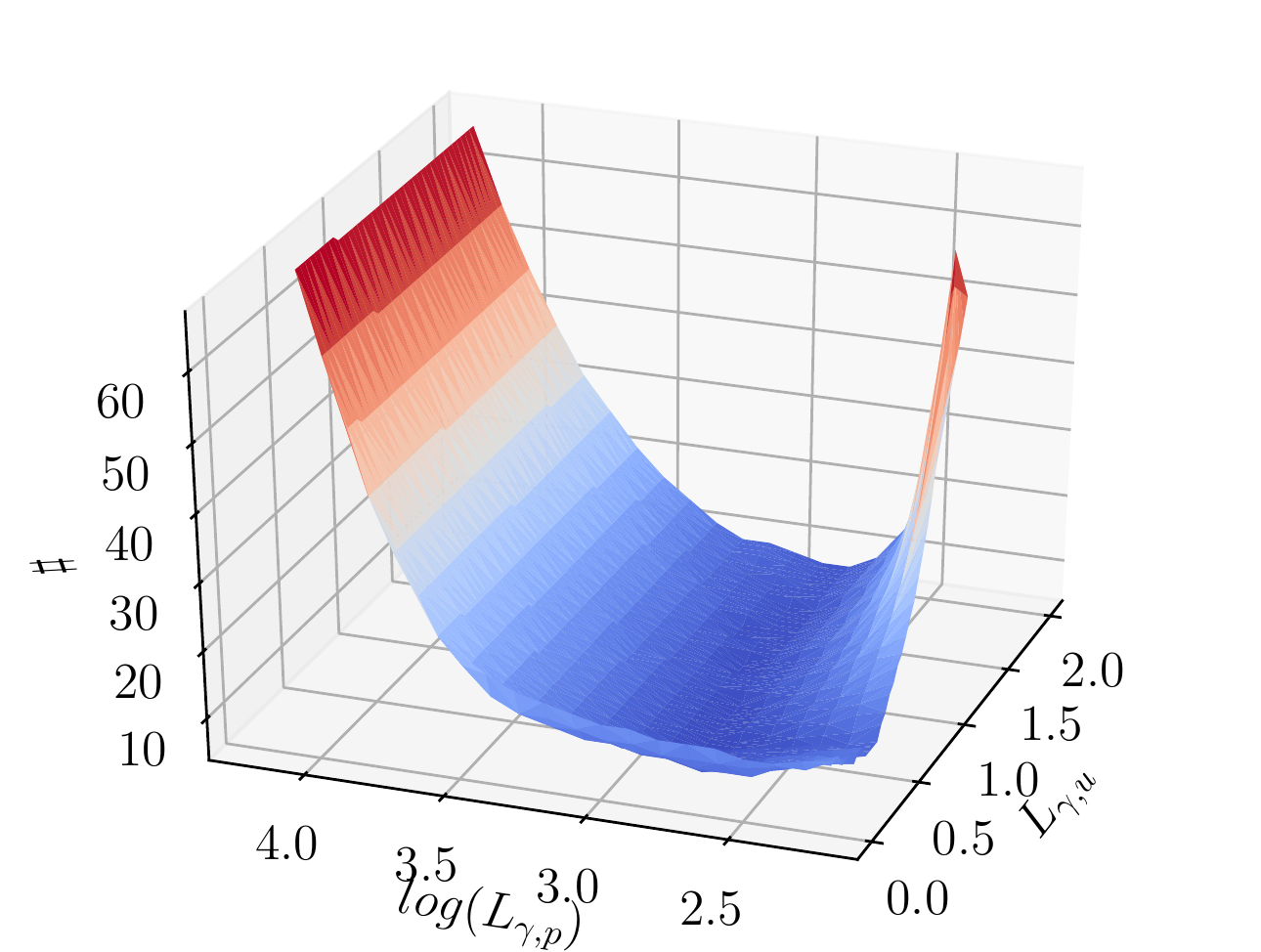}%
    \caption{Results for the example of Subsection \ref{subsec:stability} using ItLDD scheme.
        We report the number of iterations $\sharp$ for different values of $ L_{\gamma,u} $ and $ L_{\gamma,p} $.
        On the left for the first time step, in the centre for the third, and on
        the right for the last time step.}%
    \label{fig:example1_iter}
\end{figure}
}{}In Figure \ref{fig:example1_iter}, we show the performance of the ItLDD solver with
regards to changing parameters $ L_{\gamma, u} $ and $ L_{\gamma, p} $. We
consider $ L_{\gamma, u} $ taking 50 values uniformly distributed on the
interval $ (0, 2.5) $, while $ L_{\gamma, p} = 10^x $, where $ x $ are 21
equidistant values on the interval $ (2.2, 4.2) $ with step $ 0.1 $. We can observe on the surface plots that there is a global
minimum that determines the optimal choice for  $ L_{\gamma, u} $ and
$ L_{\gamma, p} $. For example, the minimum number of iterations for this flow
model is $ 5 $ for $ L_{\gamma, u} $ between $ 0.59 $ and $ 1.1 $ and $
\log(L_{\gamma, p}) $ between $ 2.8 $ and $ 3 $, in all time steps.  Similar to
\FIX{the monolithic approach}{MoLDD}, the number of iterations required by the ItLDD solver increases when the L-scheme parameters assume low values. Particularly, the scheme diverges when $ L_{\gamma, p} \leq 10^2 $. In the analysis of the
scheme we require that $  L_{\gamma, p} \geq \alpha_{\gamma} $, but the lower
values also allow a good convergence behavior, concluding that the theoretical
lower bound is possibly too strict, but it certainly exists. Therefore, in
practice, we can slightly relax the bounds on the L-scheme parameters to still
obtain good performance of the solver. It is also relevant to mention that the
normal permeability constant $ \alpha_{\gamma} = 10^4 $ is sufficiently  large
 to apply the limit case results in \Cref{thm:cvlscheme_limit_case}.

Crucially,\FIX{we want to mention that}{} the computational cost\FIX{of the realizations}{}
needed to draw Figure~\ref{fig:example1_mono} and~\ref{fig:example1_iter},  is
exactly equal to only one realization with fixed  $(L_{\gamma,u}, L_{\gamma,p}
)$,\FIX{permitting easier calculation of  these parameters, and}{} confirming the
utility of the MFB on fixing the total cost and avoiding any computational
overhead if these parameters are not optimal.

\subsection{Robustness with  respect to the physical
parameters}\label{subsec:robust}%
\FIX{In this set of test examples, w}{W}e want to show \FIX{}{now} the robustness of the algorithms
with respect to $\alpha_{\gamma}$ and $\beta$\red{;} \FIX{Note that}{} $\alpha_{\gamma}$
controls the strength of the fracture-matrix coupling, while $\beta$ controls
the strength of the non-linearity. We fix the mesh size $ h = 0.125 $ and the
time step $\tau=2^{-3}$.
\begin{table}[tbp]
    \centering
    \begin{tabular}{l|r|r|r|r|r}
        \hline
        ${\beta\backslash n}$ & 1 & 2 & 3 & 4 & 5 \\ \hline
        0.1 & 17 & 9 & 8 & 7 & 7   \\
        1   & 17 & 9 & 8 & 7 & 7    \\
        100 & 9 & 8 & 7 & 6 & 5  \\ \hline
    \end{tabular}
    \hspace*{0.05\textwidth}
    \begin{tabular}{l|r|r|r|r|r}
        \hline
        ${\beta\backslash n}$ & 1 & 2 & 3 & 4 & 5 \\ \hline
        0.1 & 17 & 11 & 10 & 9 & 8    \\
        1   & 17 & 11 & 10 & 9 & 8    \\
        100 & 14 & 10 & 9 & 9 & 8 \\ \hline
    \end{tabular}
    \\
        \vspace*{0.05\textwidth}
      \begin{tabular}{l|r|r|r|r|r}
        \hline
        ${\alpha_\gamma\backslash n}$ & 1 & 2 & 3 & 4 & 5 \\ \hline
            $10^2$    & 17 & 9 & 8 & 7 & 7 \\
            $10^4$    & 17 & 9 & 8 & 7 & 7 \\
            $10^6$    & 17 & 9 & 8 & 7 & 7 \\
            $10^8$    & 17 & 9 & 8 & 7 & 7 \\\hline
    \end{tabular}
    \hspace*{0.05\textwidth}
    \begin{tabular}{l|r|r|r|r|r}
        \hline
        ${\alpha_\gamma\backslash n}$ & 1 & 2 & 3 & 4 & 5 \\ \hline
            $10^2$    & 17 & 11 & 10 & 9 & 9 \\
            $10^4$    & 17 & 11 & 10 & 9 & 8 \\
            $10^6$    & 17 & 11 & 10 & 9 & 8 \\
            $10^8$    & 17 & 11 & 10 & 9 & 8 \\\hline
    \end{tabular}
    \caption{Results for the example of Subsection \ref{subsec:robust} reporting the number of iterations by varying the parameter $\beta$ (top) and by varying $\alpha_{\gamma}$ (bottom). Left tables correspond to solving with the MoLDD solver, while the right ones correspond to solving with the ItLDD solver. }%
    \label{tab:example1_param}
\end{table}

% \begin{table}[tbp]
%     \centering
%     \begin{tabular}{l|r|r|r|r|r}
%         \hline
%         ${\alpha_\gamma\backslash n}$ & 1 & 2 & 3 & 4 & 5 \\ \hline
%             $10^2$    & 17 & 9 & 8 & 7 & 7 \\
%             $10^4$    & 17 & 9 & 8 & 7 & 7 \\
%             $10^6$    & 17 & 9 & 8 & 7 & 7 \\
%             $10^8$    & 17 & 9 & 8 & 7 & 7 \\\hline
%     \end{tabular}
%     %
%     \hspace*{0.05\textwidth}
%     \begin{tabular}{l|r|r|r|r|r}
%         \hline
%         ${\alpha_\gamma\backslash n}$ & 1 & 2 & 3 & 4 & 5 \\ \hline
%             $10^2$    & 17 & 11 & 10 & 9 & 9 \\
%             $10^4$    & 17 & 11 & 10 & 9 & 8 \\
%             $10^6$    & 17 & 11 & 10 & 9 & 8 \\
%             $10^8$    & 17 & 11 & 10 & 9 & 8 \\\hline
%     \end{tabular}
%     \caption{Results for the example of Subsection \ref{subsec:robust}
%     reporting the number of iterations by varying the parameter $\alpha_\gamma$. Left
%     table corresponds to solving with the MoLDD scheme, while the right one
%     corresponds to solving with the ItLDD scheme. }%
%     \label{tab:example1_alpha_gamma}
% \end{table}

In Table \ref{tab:example1_param} (top), we study the dependency of the number of iterations on the Forchheimer coefficient $ \beta $. The LDD solvers show a weak dependency of the number of
iterations on the values of $ \beta $, giving slightly better results for larger
values. Overall, the MoLDD solver performs slightly better
then the ItLDD. Bear in mind that changing $ \beta $ directly influences $ L_{\gamma, u} $. This shows that this parameter should be optimized in accordance to the given value of $ \beta$. Again, we suggest that the decrease in number of iterations over time steps may
be due to using the previous iteration solution as the initial guess in the
subsequent iteration. Moreover, all the simulations in Table \ref{tab:example1_param} (top) are run with a fixed computational cost, due to using MFB. Thus, strengthening or
changing the non-linearity effects that may increase the number of iterations
\FIX{increases if the amount of stabilization via}{if} $L_{\gamma,u}$ or
$L_{\gamma,p}$ are not carefully set, has no practical effects on the total
computational cost.
%The number of subdomain solves needed to carry out
% the simulations in Table \ref{tab:example1_param} (top) is equal to 32
% subdomain solves $(\textn{Num. of DOF} * \textn{Num. of subdo.} + 2 * N)$.  This gain in the computational resources which is in
% \textit{conformity} with the spirit of \textit{reduced basis} confirms that  the
% MFB is an essential tool in the implementation of our LDD solvers.
\FIX{Clearly, we can}{We can} conclude that the two solvers remain
robust when strengthening  the non-linearity effects.
\begin{figure}[tbp]
    \centering
    \includegraphics[width=0.2\textwidth]{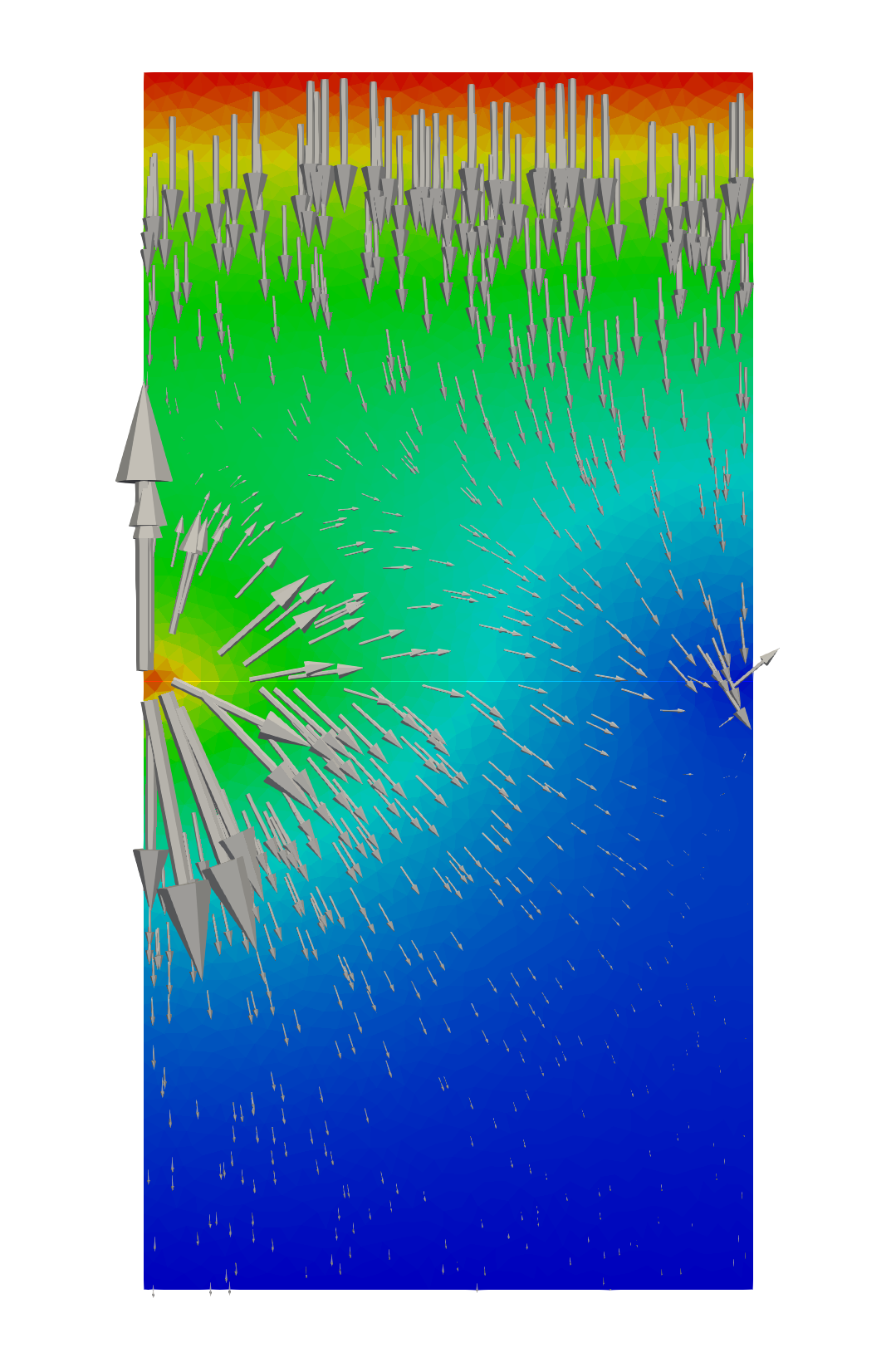}%
    \includegraphics[width=0.2\textwidth]{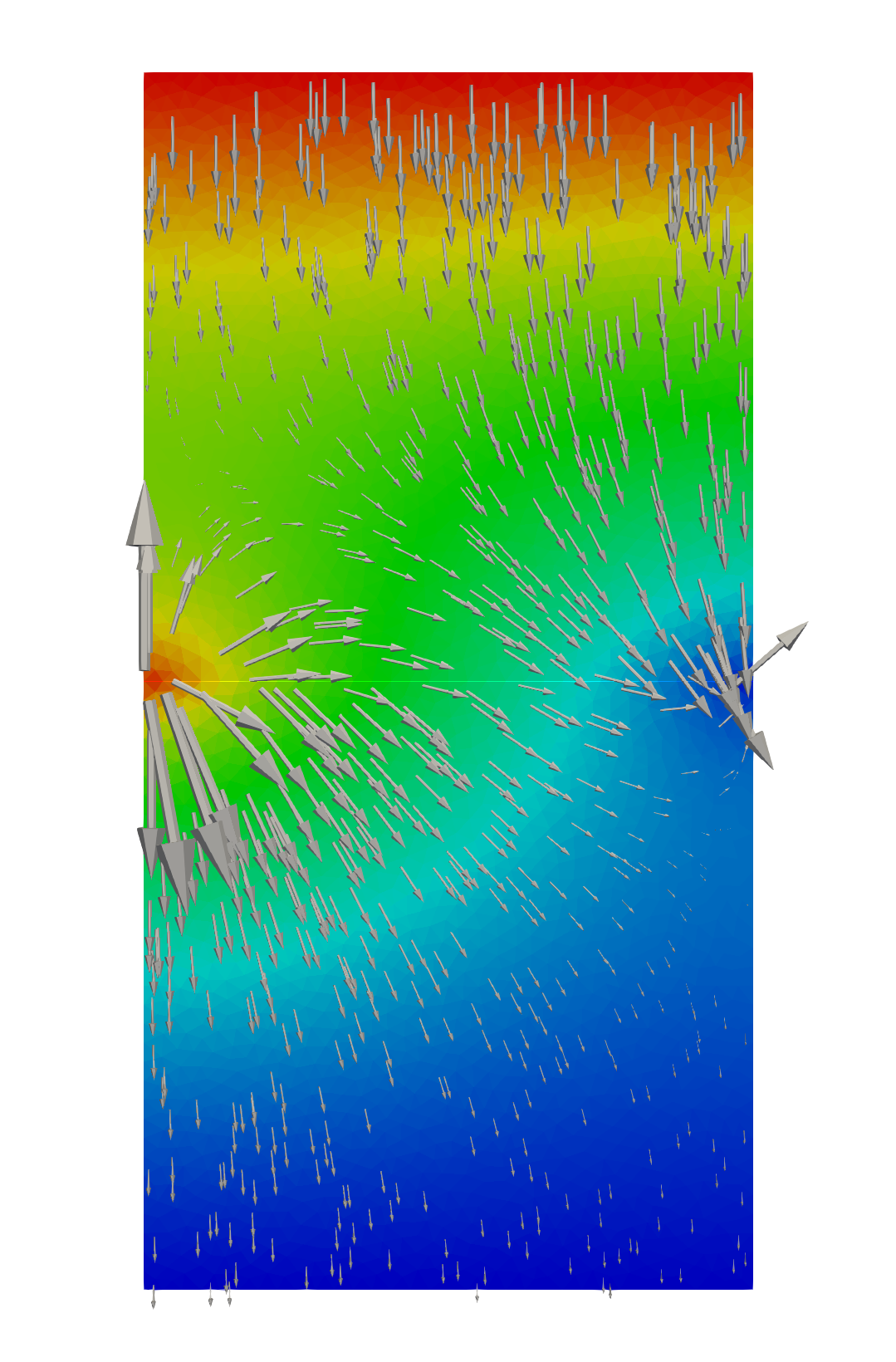}%
    \includegraphics[width=0.2\textwidth]{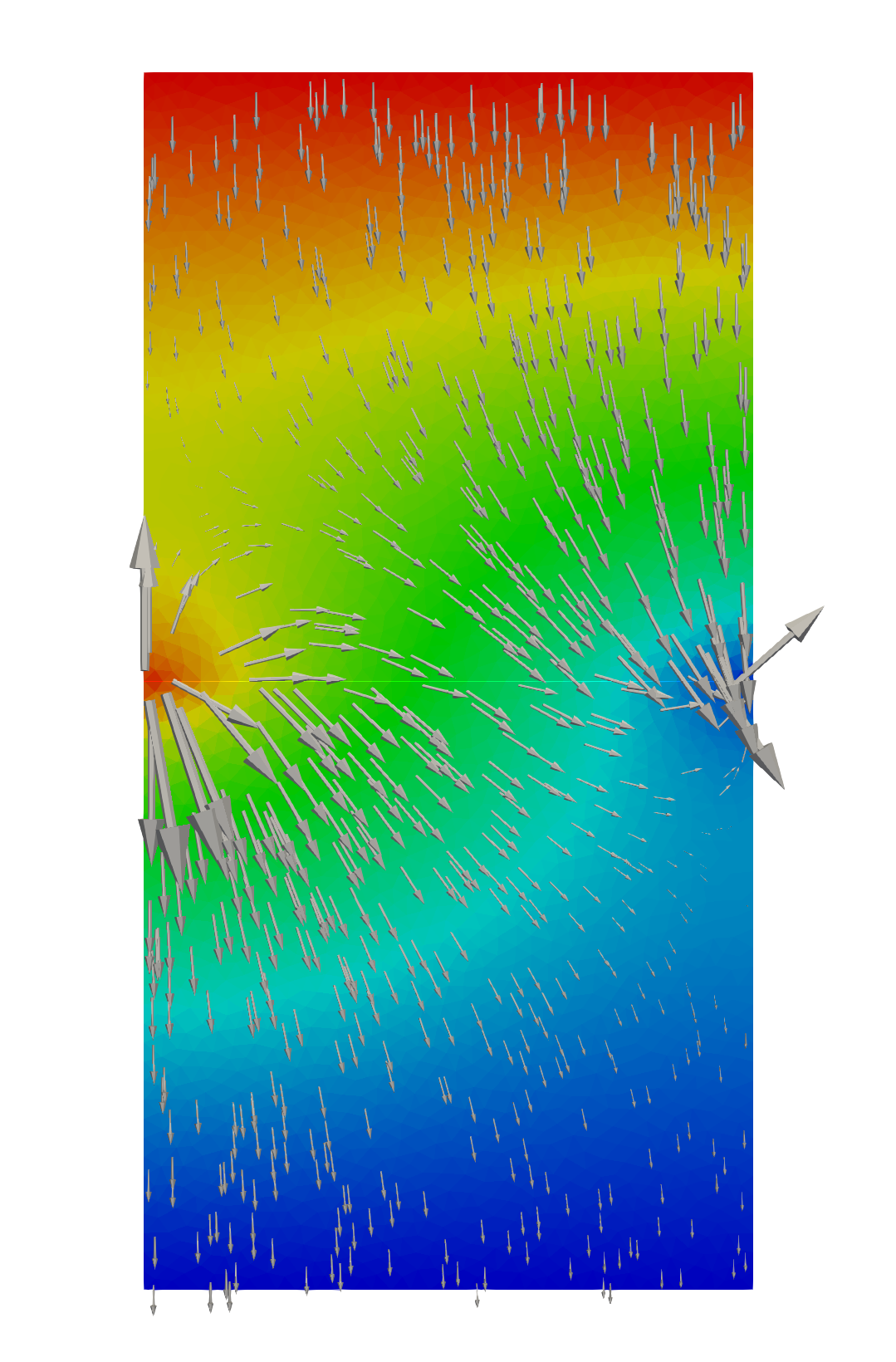}%
    \includegraphics[width=0.2\textwidth]{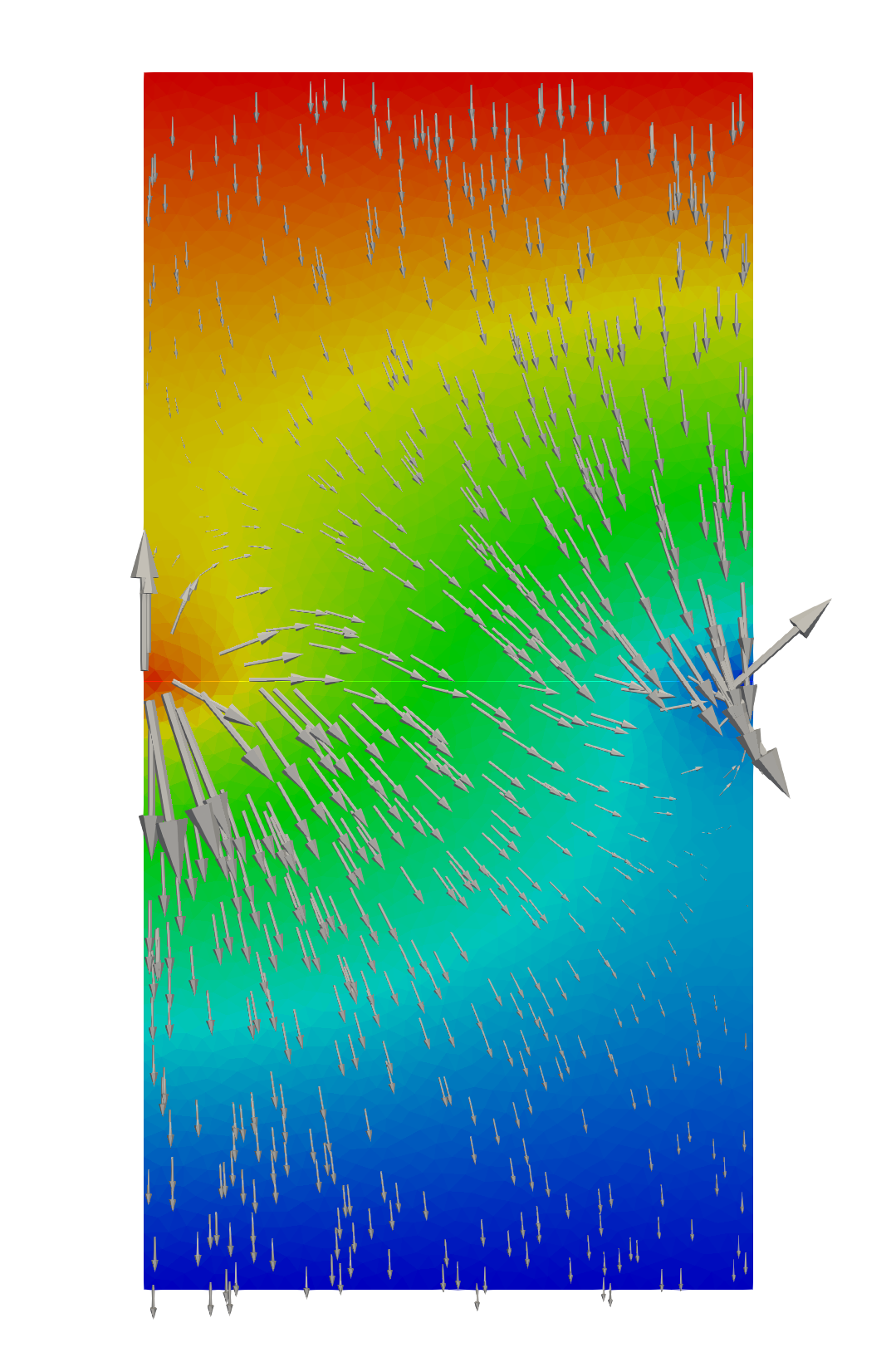}%
    \includegraphics[width=0.2\textwidth]{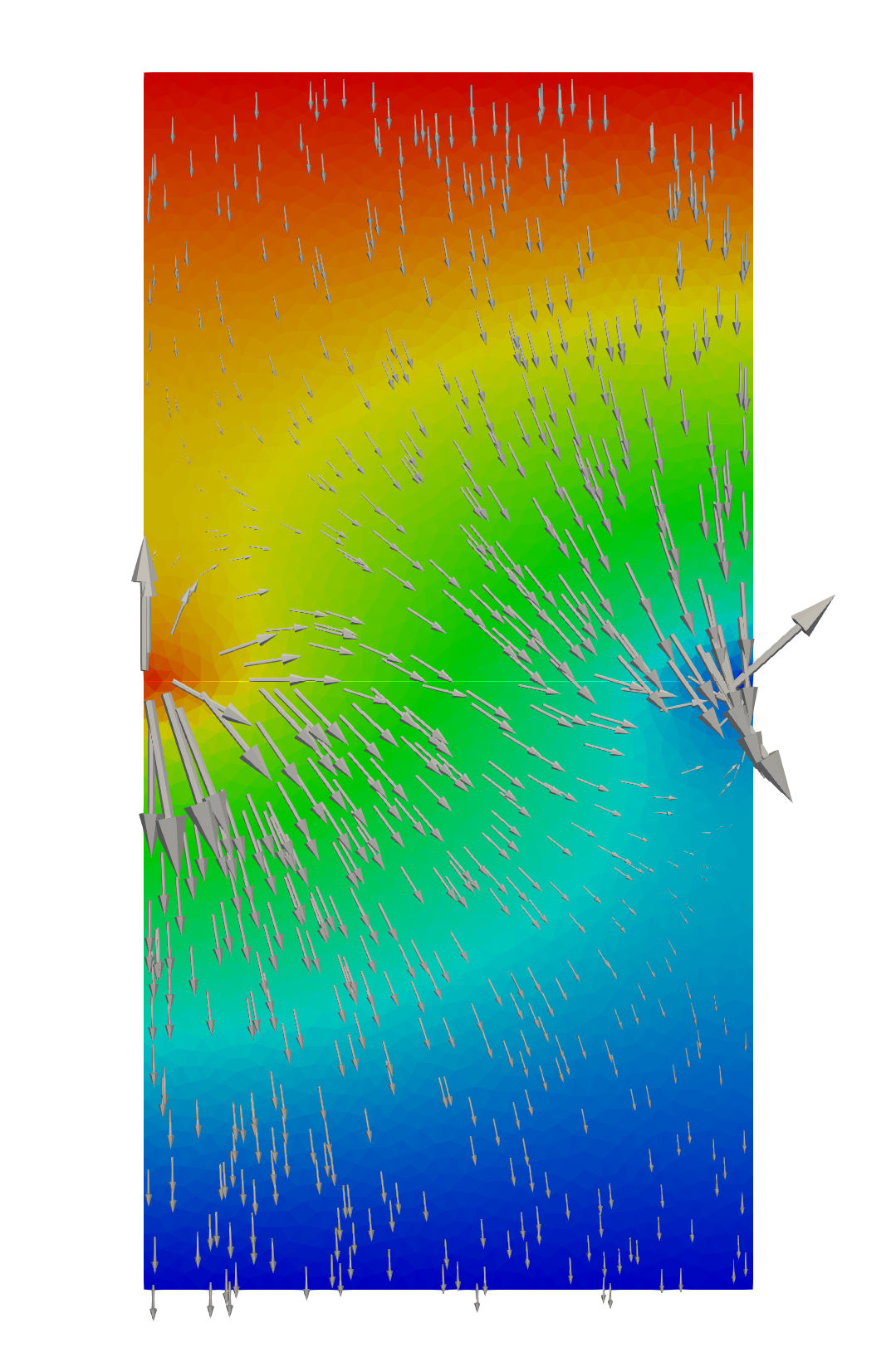}%
    \caption{$p \in [0,1]$ and $\mathbf{u}$ for the example in
    Subsection \ref{subsec:robust} with $\beta=10^2$ and $\alpha_{\gamma}=10^4$.}%
    \label{fig:example1}
\end{figure}
\FIX{Turning now to the effect of the fracture-matrix coupling on the two LDD
solvers}{In  Table \ref{tab:example1_param} (bottom)},  we show the dependency
of the number of iterations on the Robin parameter $\alpha_{\gamma}$. Clearly,
the number of iterations remains stable when strengthening or weakening
matrix-fracture coupling, confirming and concluding the robustness of both
schemes with respect to $\alpha_{\gamma}$. \FIX{Example of s}{A s}olution is reported in
Figure \ref{fig:example1}. \FIX{For t}{T}he computational cost \FIX{of the
results}{} in
Table~\ref{tab:example1_param} (bottom),  any change of $\alpha_{\gamma}$
requires re-computing the MFB. However, this cost remains
fixed  when running and comparing the two LDD solvers for a fixed
$\alpha_{\gamma}$.

{\subsection{Flexibility of coarsening/refining the mortar grids}\label{subsec:multiscale}
In this set of simulations, we consider the case
of  weak inter-dimensional coupling by fixing $\alpha_\gamma = 1$, with  a low
permeable fracture with $\vK_{\gamma}=10^{-4}\II$.   We fix the following
parameters: $h=2^{-5}$ (on the matrix), $L_{\gamma,u}=1$, $L_{\gamma,p} = 2\cdot
10^{2}$ and $\beta= 1$. We allow for a coarse scale of the mortar grids on
the fracture; $h_{\gamma}= 2^{-3},\,h_{\gamma}=2^{-4},\,h_{\gamma}=2^{-5}$, where  the last choice corresponds to matching grids on the fracture.   In Table~\ref{tab:example1_mortar}, we plot the resulting number of
iterations required by each LDD solver. Particularly, we can see that
the sequential ItLDD solver in the matching grids has more difficulty to converge, so the effectiveness of the  MFB is more pronounced.  The monolithic solver MoLDD seems to be more robust with refining the mortar grids. Here,  the  computational cost of the construction of the inter-dimensional operator benefits from fewer mortar degrees on the fracture.
 \begin{figure}[tbp]
    \centering
    \includegraphics[width=0.2\textwidth]{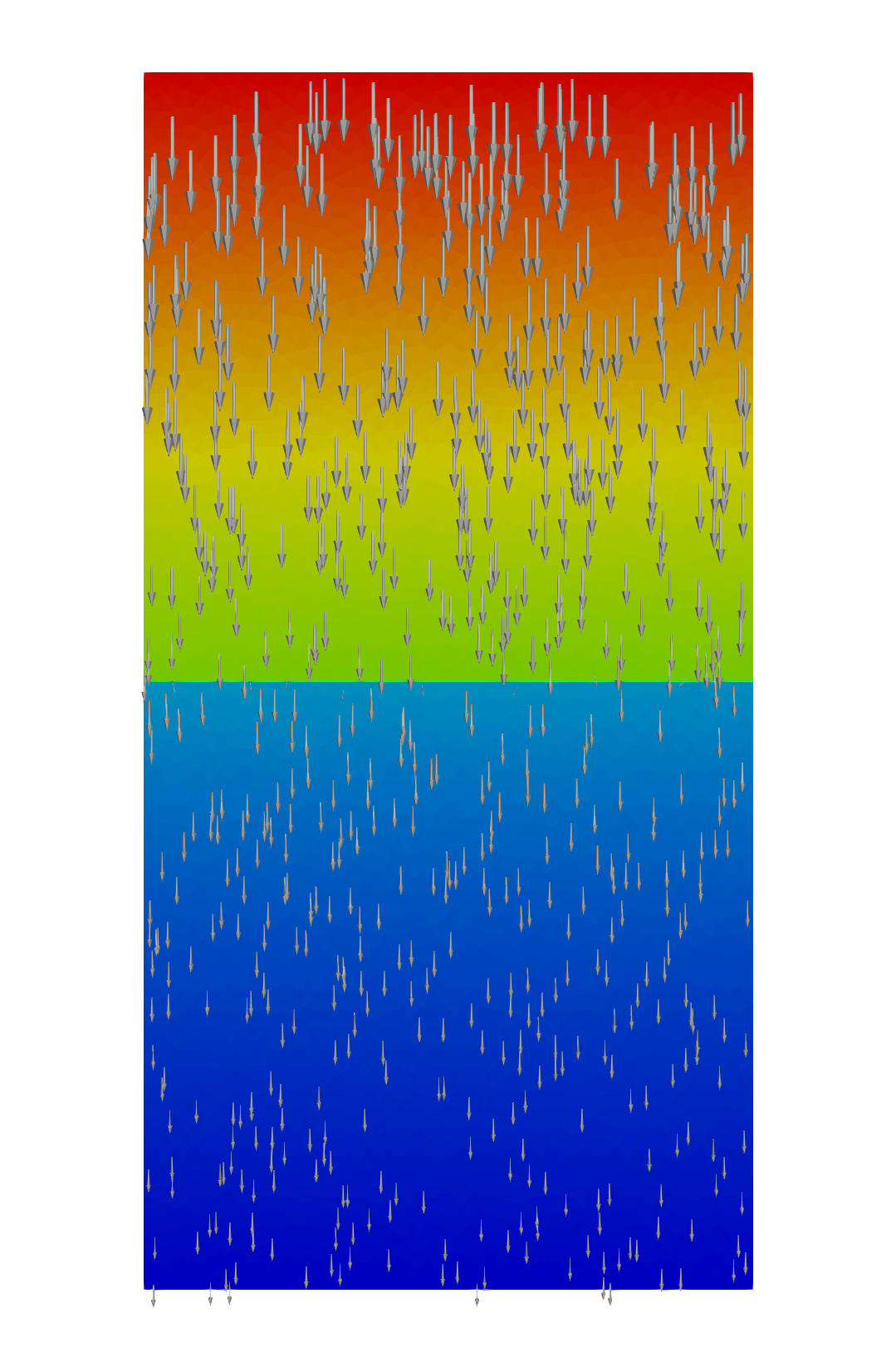}%
    \includegraphics[width=0.2\textwidth]{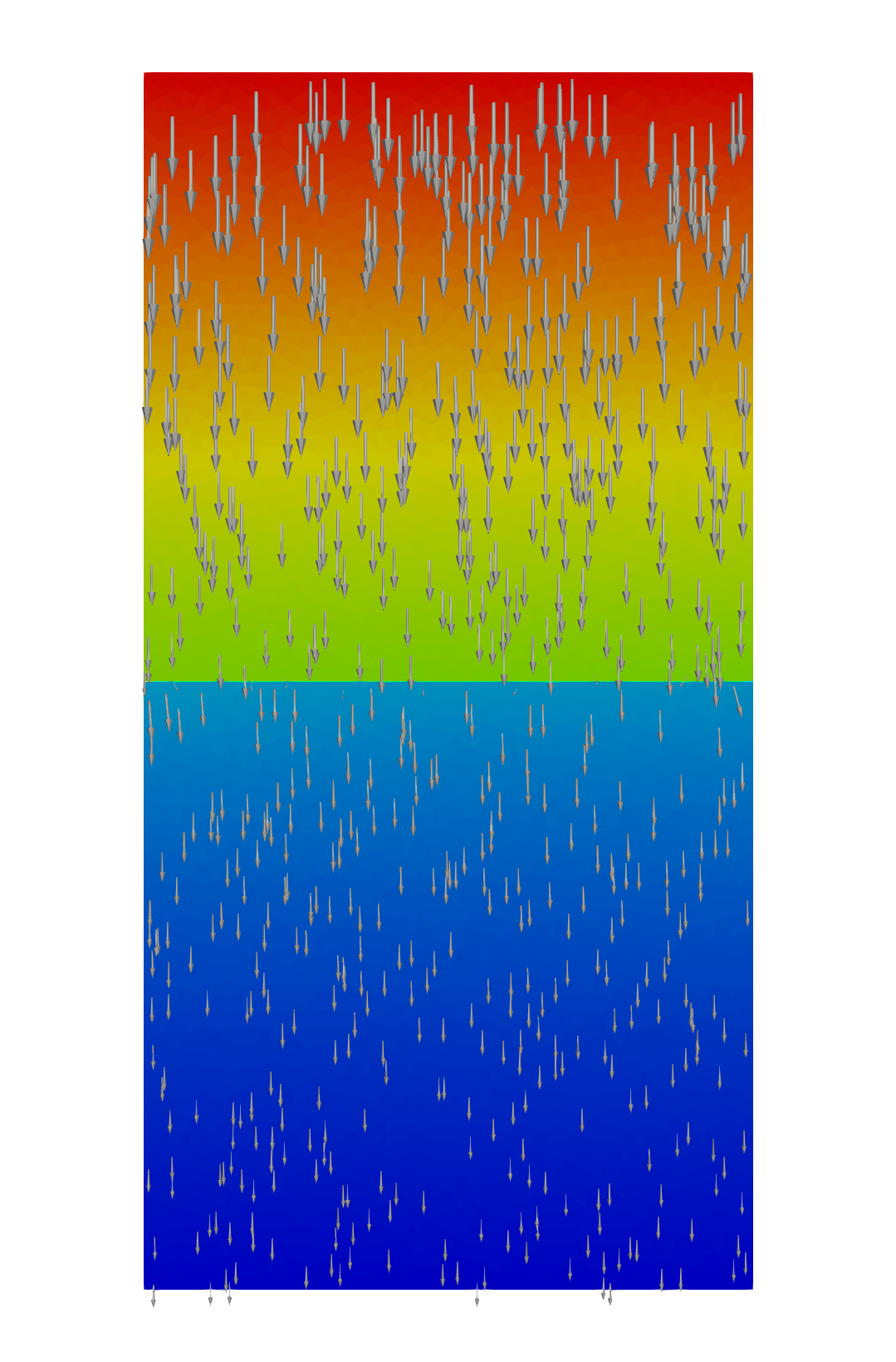}%
    \caption{$p \in [0,1]$ and $\mathbf{u}$ for the example in
    Subsection \ref{subsec:multiscale}  with $\vK_{\gamma}=10^{-4}\II$, $\beta=1$ and $\alpha_{\gamma}=1$. With fine (left) and coarse (right) mortar grids.}%
    \label{fig:example1_coarse}
\end{figure}
  Example of a
solution is depicted in Figure~\ref{fig:example1_coarse}, where we can see that
conforming and non-conforming grids (with $h_{\gamma}= 2^{-3}$) on the fracture give  indistinguishable results.
\begin{table}[tbp]
    \centering
    \begin{tabular}{l|r|r|r|r|r}
        \hline
        ${\sharp \text{cells} \backslash n}$ & 1 & 2 & 3 & 4 & 5 \\ \hline
            8   & 3 & 3 & 3 & 3 & 2\\
            16  & 3 & 3 & 3 & 3 & 2\\
            64  & 3 & 3 & 3 & 3 & 2\\
            %131 & 3 & 3 & 3 & 3 & 2\\ \hline
    \end{tabular}
    \hspace*{0.05\textwidth}
    \begin{tabular}{l|r|r|r|r|r}
        \hline
        ${\sharp \text{cells} \backslash n}$ & 1 & 2 & 3 & 4 & 5 \\ \hline
            8   & 11 & 10 & 10 & 9  & 9 \\
            16  & 11 & 10 & 10 & 9  & 9 \\
            64  & 18 & 15 & 15 & 15 & 14\\
            %131 & 20 & 17 & 17 & 16 & 15\\ \hline
    \end{tabular}
    \caption{Results for the example of Subsection \ref{subsec:multiscale}
    reporting the number of iterations for conforming and non-conforming (coarse scale) grids on the fracture.  Left
    table corresponds to solving with the MoLDD scheme, while the right one
    corresponds to solving with the ItLDD scheme.}%
    \label{tab:example1_mortar}
\end{table}

\subsection{Extension to other flow models: the Cross model}\label{subsec:extension}
The aim of this test case is to show that our LDD solvers can be applied to more general flow models.  On the fracture, we assume  the Cross flow model  to relate $p_\gamma$ and
$\vecu_\gamma$. We have the non-linear term given by
\begin{gather*}
    \xi(\vecu_\gamma) = \dfrac{(\omega_{0}-\omega_{\infty})\vecu_\gamma}{1+\FIX{\zeta}{c_{\omega}} \vert{\vecu_\gamma}
    \vert^{2-r}}.
\end{gather*}
The parameters $0\leq \omega_{\infty}<\omega_{0}$, $\FIX{\zeta}{c_{\omega}}$ and $r$ are positive
scalars related to the rheology of the considered liquid.
In~\eqref{Initial_system_d_f}, $\vK_{\gamma}$ is now replaced by
$\omega_{\infty}$. We let $\omega\eq\omega_{\infty}-\omega_{0}$ and set   $
\omega_{0}=2$, $\omega_{\infty}=1$, $\FIX{\zeta}{c_{\omega}} = 1$, and $ r = 1.5$. It is
\FIX{easy}{possible} to
verify that $ \xi $ satisfies the assumption \ref{ass_xi}. \FIX{For more details
s}{S}ee
\cite{MR3388812,MR2842139} and the references therein.
% \subsubsection{Stability with respect to  the user-given parameters}

We choose  the iterative solver ItLDD and re-compute the simulations of
Subsection~\ref{subsec:stability} and~\ref{subsec:robust} for the  Cross flow
model. We set then $ L_{\gamma, u} = L_{\xi}/2=0.5 $ and $ L_{\gamma,
p}=\alpha_{\gamma}= 10^3$ as derived from the theory. The results (not shown)
demonstrate first the stability of the ItLDD solver with respect to  the
parameters $ h $ and $\tau$.  Crucially,  all the simulations in this example do
not require additional computational cost (except  fracture solves), as we use
the same MFB inherited from the  Forchheimer model. We set $h=2^{-5}$ with
slightly coarse grids on the fracture $h_{\gamma}=2^{-4}$ and $\tau=2^{-4}$.

\begin{figure}[tbp]
    \centering
    \subfloat[ItLDD scheme]{
    \includegraphics[width=0.33\textwidth]{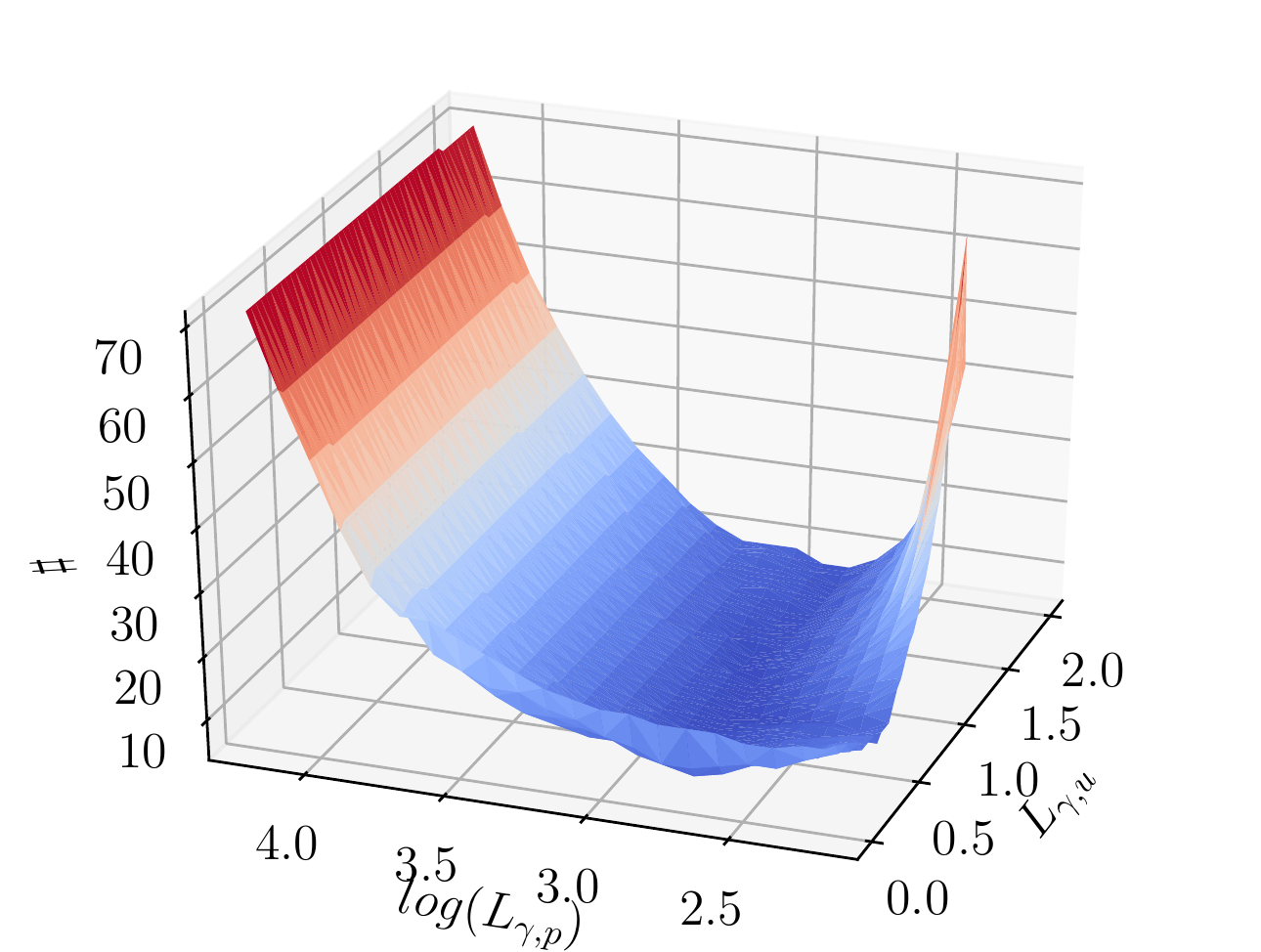}\label{fig:example2_iter}}%
    \subfloat[$p$ and $\mathbf{u}$]{
    \includegraphics[width=0.2\textwidth]{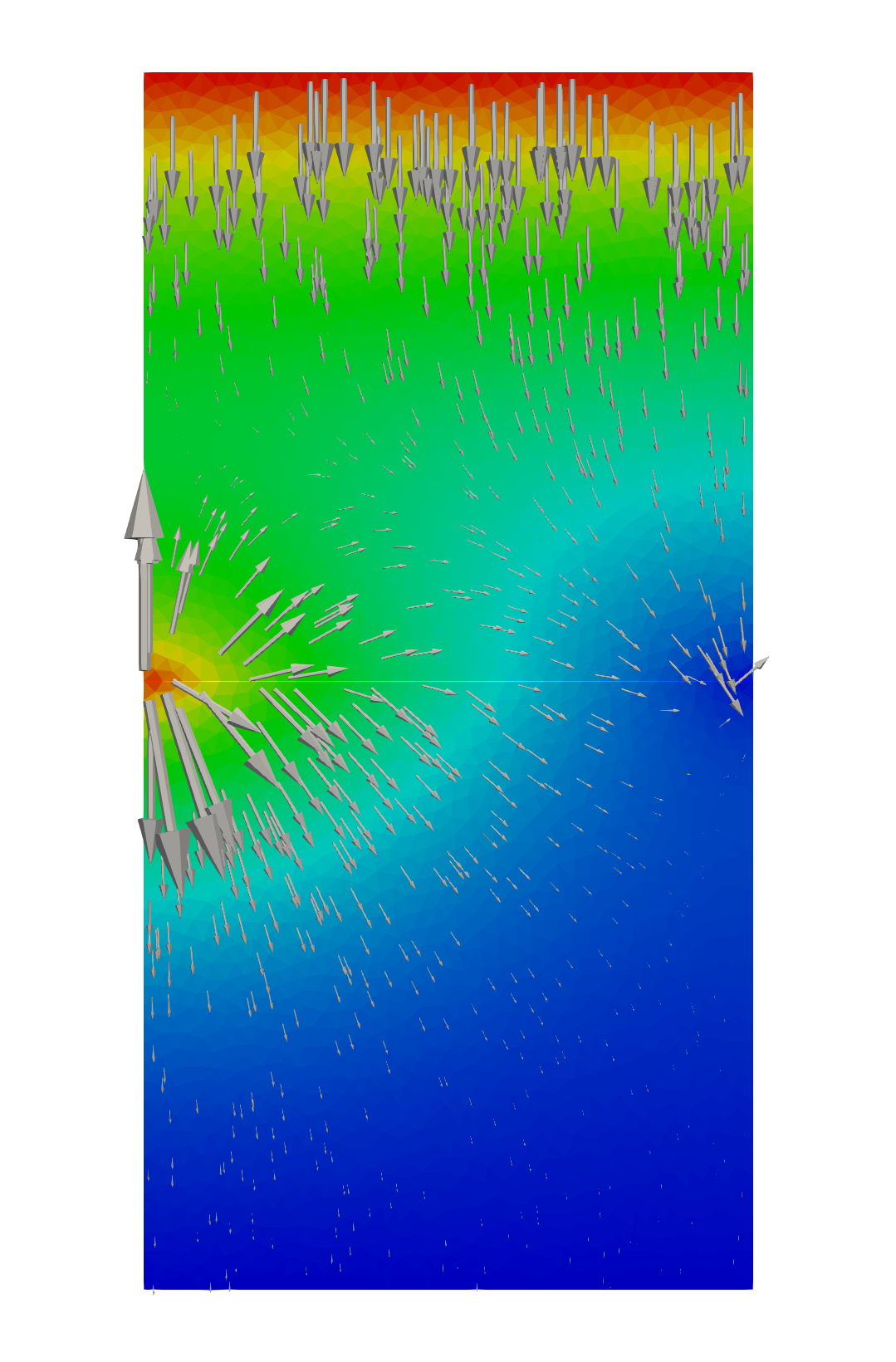}%
    \includegraphics[width=0.2\textwidth]{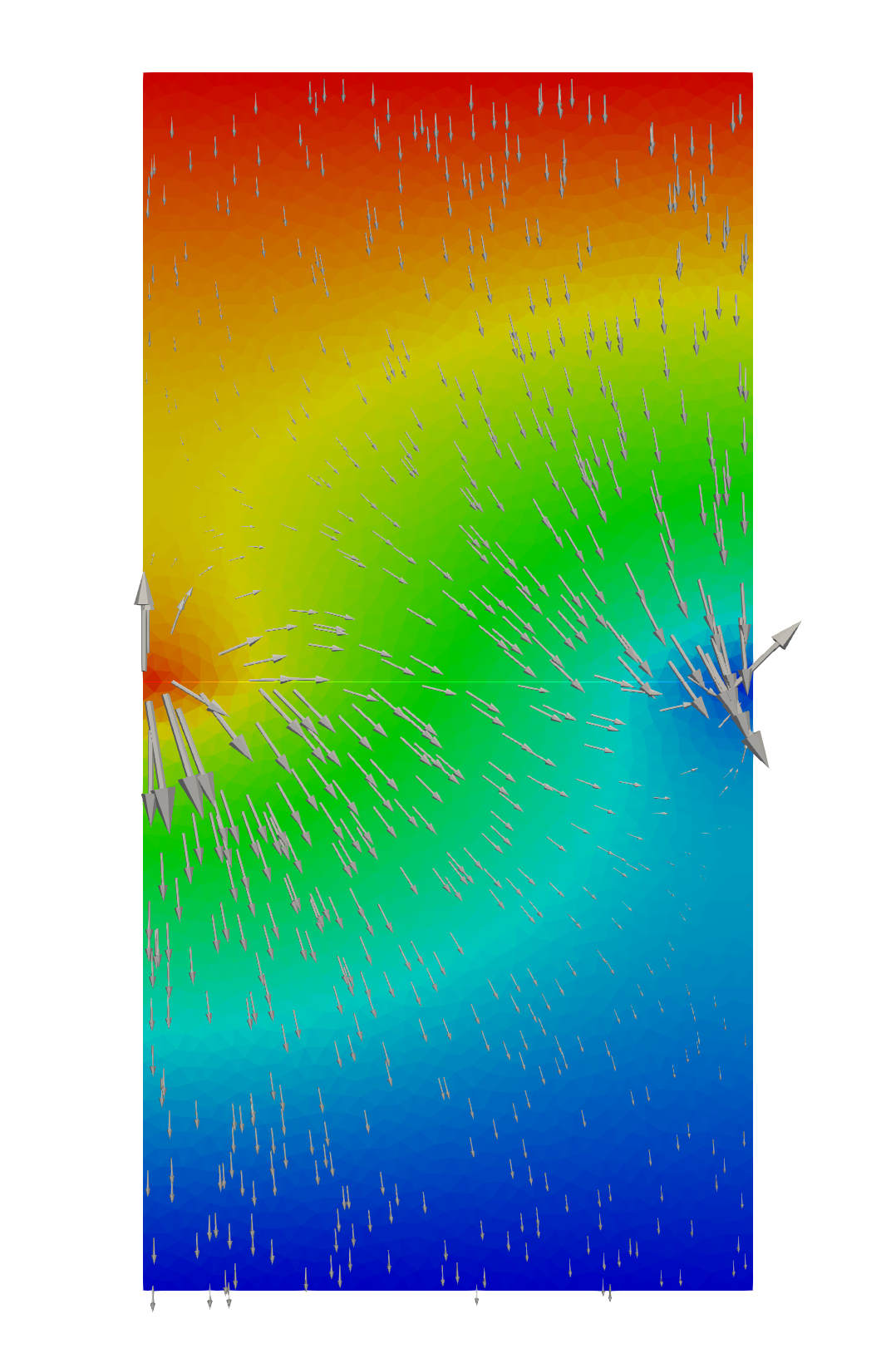}%
    \label{fig:example2}
    }
    \caption{Results for the example of Subsection \ref{subsec:extension}\FIX{
    using ItLDD scheme}{}.
    \FIX{W}{On the left, w}e report the number of iterations $\sharp$ for different values of $
        L_{\gamma,u} $ and $ L_{\gamma,p} $\FIX{}{, for the third time step}.
        \FIX{
        On the left for the first time step, in the centre for the third, and on
        the right for the last time step.}{On the right, $p \in [0,1]$ and $\mathbf{u}$ for the example in
    Subsection \ref{subsec:extension} with $\omega=1$, $c_{\omega}=1$, and $r=1.5$.}}%
\end{figure}

% Now, given all the physical parameters of the problem fixed to $ \omega = 1 $, $
% \zeta = 1 $ and $ r=1.5$.

% , we present the number of iterations for different
% values of $ L_{\gamma, u} $ in MoLDD solver in \Cref{fig:example2_mono}. We set
% the mesh size $h$ to $ 0.125 $ and  the time step$ \tau$ to $0.2$. We recall that the theoretical  value of
% $ L_{\gamma, u}= L_{\xi}/2=0.5$ is not the optimal but still a good candidate.
%  The optimal optimal  $ L_{\gamma, opt}$ in this case being around $0.92$, and the number of iterations  increases slowly
% for higher values  and drastically for the lower values  of $ L_{\gamma, u} $,
% evidently in all time steps. Anyhow, a bad choice of this parameter may induce an increased number of iterations but the computational costs remain approximately the same; thanks to the use of MFB implemented in an offline phase
% neglecting the effects of $ L_{\gamma, u} $ on the number of subdomain solves.
In \Cref{fig:example2_iter}, we show the results for the ItLDD
solver on a set of realizations of  $( L_{\gamma, u}, L_{\gamma, p})$. The results do not differ greatly comparing to the case of Forchheimer's flow
model. The convexity of the surface plots in all time steps is clear giving away
an optimal combination of values for $ L_{\gamma, u} $ and $ L_{\gamma, p} $.
For example, we can find minimum of $ 5 $ iterations for $ L_{\gamma, u} $
between approximately $ 0.73 $ and $ 1.57 $, and $ L_{\gamma, p} $ between $
10^{2.8} $ and $ 10^3 $. Note that the parameters prescribed by the theoretical
results, $ L_{\gamma, u} =0.5 $ and $ L_{\gamma, p}= 10^3 $, form  a good
candidate in    this simulation. Finally, we mention that we can use an
optimization process, as detailed in~\cite{storvik2018optimization}, in order to
get the optimal values. Precisely, the fact that the choice of the stabilization
parameters is independent of the mesh size, one can then run the LDD solver on a
coarse spatial mesh and one time step, and study the stabilization parameters in
specific intervals centred around the theoretical values. The parameters that
give the lowest number of iterations are  then used for the real computations.
This ``brute-force'' optimization is simple to do in practice when using the MFB.}
% \subsubsection{Robustness with  respect to the physical parameters}

\begin{table}[tbp]
    \centering
%     \begin{tabular}{l|r|r|r|r|r}
%         \hline
%         ${\omega\backslash n}$ & 1 & 2 & 3 & 4 & 5 \\\hline
%         0.1 & 15 & 9 & 8 & 7 & 6      \\
%         1   & 5 & 5 & 4 & 4 & 4       \\
%         2.5 & 30 & 19 & 16 & 12 & 10  \\ \hline
%     \end{tabular}
%     %
%     \hspace*{0.04\textwidth}
%     %
%     \begin{tabular}{l|r|r|r|r|r}
%         \hline
%         ${\zeta\backslash n}$ & 1 & 2 & 3 & 4 & 5 \\\hline
%         1   & 5 & $\,$ 5 & $\,$ 4 & 4 & 4  \\
%         10  & 10 & $\,$ 8 & $\,$ 7 & 6 & 5 \\
%         100 & 16 & $\,$ 9 & $\,$ 8 & 7 & 7 \\ \hline
%     \end{tabular}
%     %
%     \hspace*{0.04\textwidth}
%     %
%     \begin{tabular}{l|r|r|r|r|r}
%         \hline
%         ${r\backslash n}$ & 1 & 2 & 3 & 4 & 5 \\\hline
%         1   & 4 & $\,$ 3 & $\,$ 3 & 3 & 3 \\
%         1.5 & 5 & $\,$ 5 & $\,$ 4 & 4 & 4 \\
%         4.5 & 17 & $\,$ 9 & $\,$ 8 & 7 & 7 \\ \hline
%     \end{tabular}
%     %
%     \vskip 5mm
    %
    \begin{tabular}{l|r|r|r|r|r}
        \hline
        ${\omega\backslash n}$ & 1 & 2 & 3 & 4 & 5 \\\hline
        0.1 & 15 & 11 & 10 & 9 & 8     \\
        1   & 10 & 9 & 8 & 8 & 8      \\
        2.5 & 30 & 19 & 16 & 14 & 12 \\ \hline
    \end{tabular}
    \hspace*{0.04\textwidth}
    \begin{tabular}{l|r|r|r|r|r}
        \hline
        ${\FIX{\zeta}{c_{\omega}}\backslash n}$ & 1 & 2 & 3 & 4 & 5 \\\hline
        1   & 10 & 9 & 8 & 8 & 7 \\
        10  & 11 & 9 & 9 & 8 & 7 \\
        100 & 16 & 11 & 10 & 9 & 8  \\ \hline
    \end{tabular}
    \hspace*{0.04\textwidth}
    \begin{tabular}{l|r|r|r|r|r}
        \hline
        ${r\backslash n}$ & 1 & 2 & 3 & 4 & 5 \\\hline
        1   & 10 & 9 & 9 & 8 & 7  \\
        1.5 & 10 & 9 & 8 & 8 & 7  \\
        4.5 & 17 & 11 & 10 & 9 & 8  \\ \hline
    \end{tabular}
    \caption{Results for the example of Subsection \ref{subsec:extension}.
        On the left the number of iterations by varying the values of $\omega$.
        In the center when $\FIX{\zeta}{c_{\omega}}$ changes, while on the right for different
        values of $r$.}%
    \label{tab:example2_param}
\end{table}

In Table~\ref{tab:example2_param}, we consider to test the dependency of the
number of iterations on the rheology parameters of the flow model.   We provide results of
several tests on $\omega$, $\FIX{\zeta}{c_{\omega}}$, and $r$. While testing for one of the
parameters, the other two are fixed to either  $ \omega= 1 $, $ \FIX{\zeta}{c_{\omega}} = 1 $ or
$ r = 1.5 $.    We can observe that $ \omega$ strongly
influences the performance of both methods making it difficult to converge when
$ \omega$ gets larger, that is, when the non-linearity is stronger. For larger
values of $\omega$ the number of iterations increases drastically, suggesting the necessity  to adjust the $L$-scheme parameters as well as to use the MFB. The number of iterations was  less dependent of the parameter $ \FIX{\zeta}{c_{\omega}} $. This parameter itself contributes less to the strength of the non-linearity in comparison to $ \omega$, and, thus,
influencing less the performance of the  solver. Finally, we can again
notice a moderate dependency of number of iterations on parameter $ r $. This is
especially shown when $ r > 2 $ and the exponent on the vector norm of $
\vecu_{\gamma} $ becomes negative. Thus, the non-linear flow function $ \xi $ is
exponential in the values of $ \vecu_{\gamma} $ and accounts for the very fast
flow in the fractures. We finally recall that the robustness study drawn in Table~\ref{tab:example2_param} has the cost of one realization with  fixed-parameters,  confirming the role  of the MFB in our solvers. For the robustness of LDD solvers with respect to the matrix-fracture coupling effects induced by the parameter $\alpha_{\gamma}$, we have seen  that both solvers are robust when strengthening or weakening the coupling effects (results not shown). Example of solution is reported in Figure \ref{fig:example2}.

\FIX{
\begin{figure}[tbp]
    \centering
    \includegraphics[width=0.2\textwidth]{case2_0}%
    \includegraphics[width=0.2\textwidth]{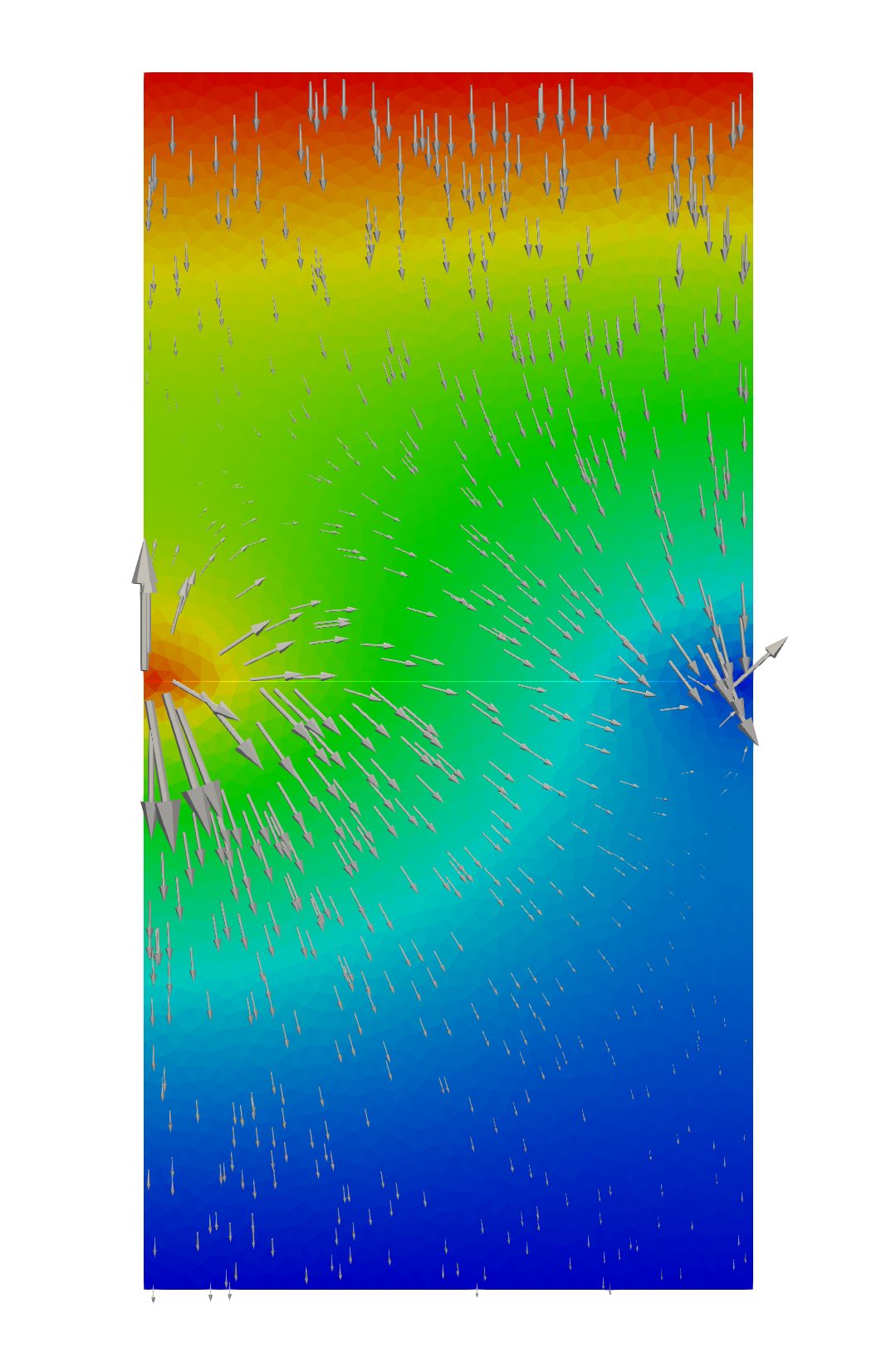}%
    \includegraphics[width=0.2\textwidth]{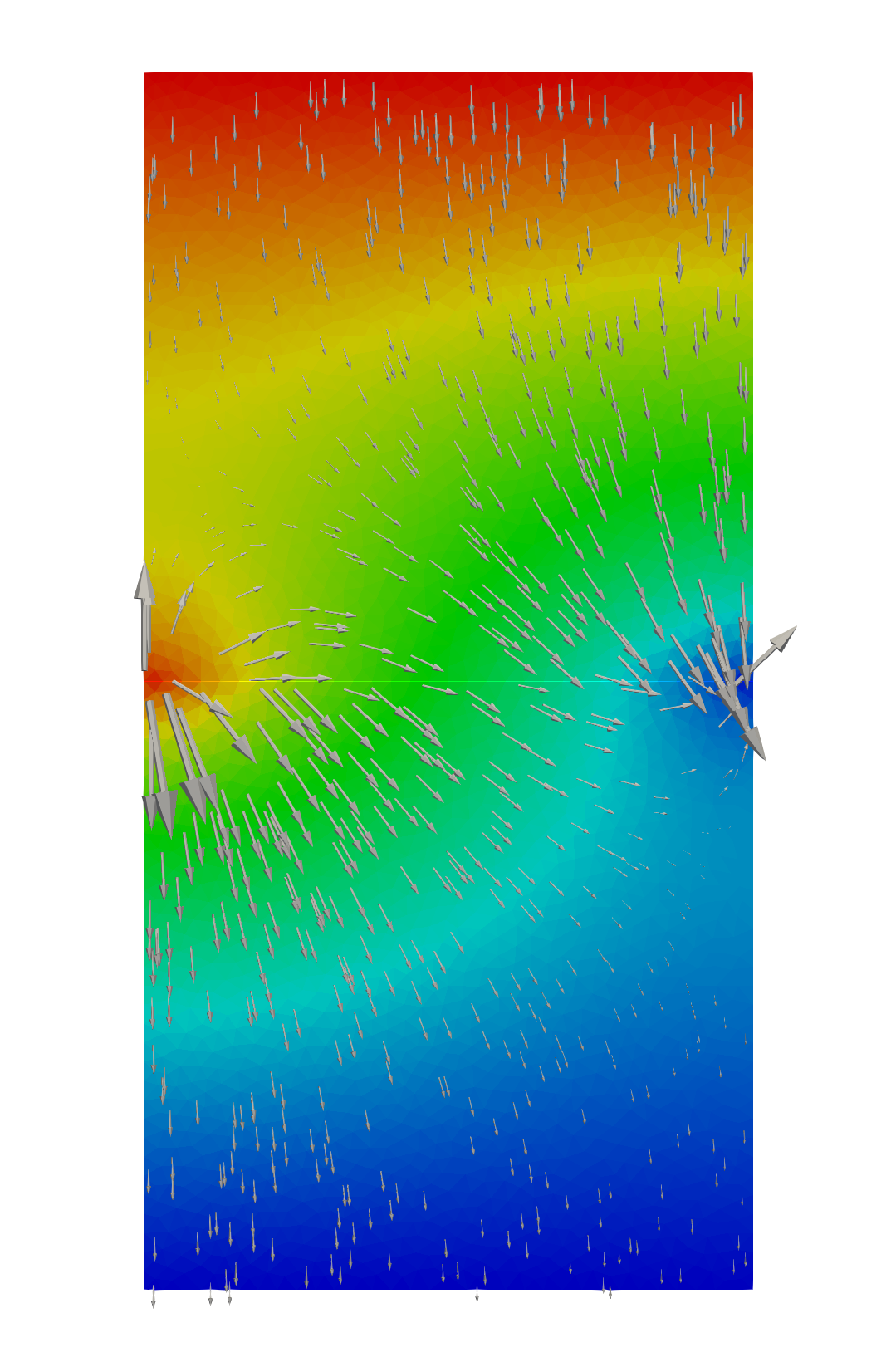}%
    \includegraphics[width=0.2\textwidth]{case2_3}%
    \includegraphics[width=0.2\textwidth]{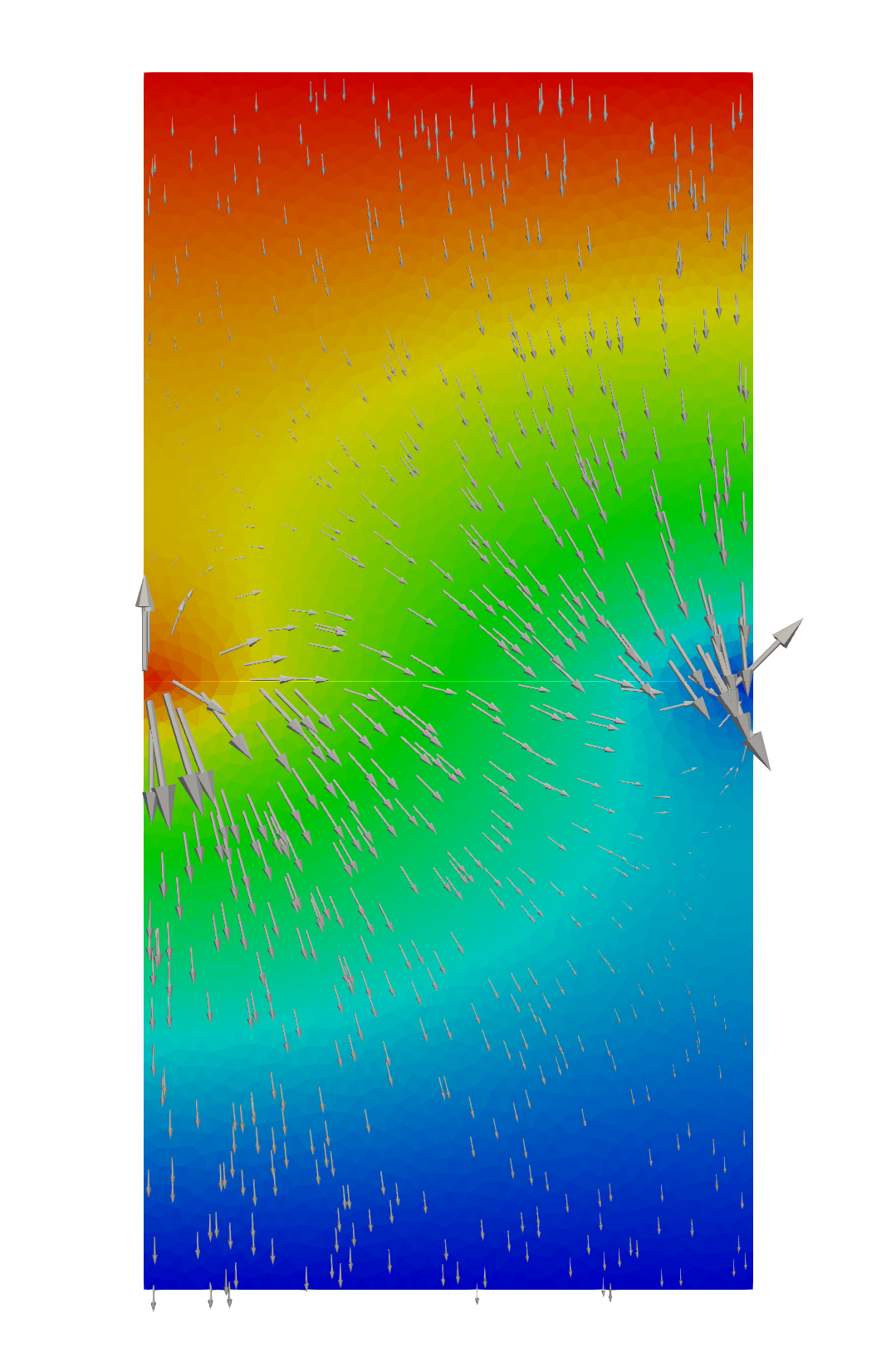}%
    \caption{$p \in [0,1]$ and $\mathbf{u}$ for the example in
    Subsection \ref{subsec:extension} with $\omega=1$, $\zeta=1$, and $r=1.5$.}%
    \label{fig:example2}
\end{figure}}{}

\section{Conclusions}\label{sec:conclusion}
In this study, we have presented two new strategies to solve a compressible
single-phase flow problem in a porous medium with a fracture. In the porous medium, we have considered the
classical Darcy relation between the velocity and the pressure while, in the
fracture, a general non-linear law. We employ the L-scheme to
handle the non-linearity term, but also to treat the
inter-dimensional coupling in the second proposed algorithm. To further achieve computational speed-up, the linear
Robin-to-Neumann co-dimensional map is constructed in an offline phase resulting
in a problem reduced only to the fracture interface. This approach allows to change the
fracture parameters, or the fracture flow model in general, without the need to recompute the problem associated with the
rock matrix. We have shown the existence of  optimal values for the L-scheme parameters, which are validated through several numerical tests.
Future developments can be explored towards domain decomposition in time, where fast and
slow fractures are solved asynchronously.

\section*{Acknowledgements}

We acknowledge the financial support from the Research Council of Norway for the
TheMSES (no. 250223) and ANIGMA projects (no. 244129/E20) through the ENERGIX program.
% The authors also warmly thank Eirik Keilegavlen for valueable comments and discussions on this topic.
%\section*{Bibliography}
\bibliographystyle{siamplain}
\bibliography{BIOTbib,biblio}

\end{document}

% --- supplement: ex_supplement.tex ---

\maketitle

\section{A detailed example}

Here we include some equations and theorem-like environments to show
how these are labeled in a supplement and can be referenced from the
main text.
Consider the following equation:
\begin{equation}
  \label{eq:suppa}
  a^2 + b^2 = c^2.
\end{equation}
You can also reference equations such as \cref{eq:matrices,eq:bb} 
from the main article in this supplement.

\lipsum[100-101]

\begin{theorem}
  An example theorem.
\end{theorem}

\lipsum[102]
 
\begin{lemma}
  An example lemma.
\end{lemma}

\lipsum[103-105]

Here is an example citation: \cite{KoMa14}.

\section[Proof of Thm]{Proof of \cref{thm:bigthm}}
\label{sec:proof}

\lipsum[106-112]

\section{Additional experimental results}
\Cref{tab:foo} shows additional
supporting evidence. 

\begin{table}[htbp]
{\footnotesize
  \caption{Example table}  \label{tab:foo}
\begin{center}
  \begin{tabular}{|c|c|c|} \hline
   Species & \bf Mean & \bf Std.~Dev. \\ \hline
    1 & 3.4 & 1.2 \\
    2 & 5.4 & 0.6 \\ \hline
  \end{tabular}
\end{center}
}
\end{table}

\bibliographystyle{siamplain}
\bibliography{references}

% --- supplement: siamart_190516/ex_supplement.tex ---

\maketitle

\section{A detailed example}

Here we include some equations and theorem-like environments to show
how these are labeled in a supplement and can be referenced from the
main text.
Consider the following equation:
\begin{equation}
  \label{eq:suppa}
  a^2 + b^2 = c^2.
\end{equation}
You can also reference equations such as \cref{eq:matrices,eq:bb} 
from the main article in this supplement.

\lipsum[100-101]

\begin{theorem}
  An example theorem.
\end{theorem}

\lipsum[102]
 
\begin{lemma}
  An example lemma.
\end{lemma}

\lipsum[103-105]

Here is an example citation: \cite{KoMa14}.

\section[Proof of Thm]{Proof of \cref{thm:bigthm}}
\label{sec:proof}

\lipsum[106-112]

\section{Additional experimental results}
\Cref{tab:foo} shows additional
supporting evidence. 

\begin{table}[htbp]
{\footnotesize
  \caption{Example table}  \label{tab:foo}
\begin{center}
  \begin{tabular}{|c|c|c|} \hline
   Species & \bf Mean & \bf Std.~Dev. \\ \hline
    1 & 3.4 & 1.2 \\
    2 & 5.4 & 0.6 \\ \hline
  \end{tabular}
\end{center}
}
\end{table}

\bibliographystyle{siamplain}
\bibliography{references}